\documentclass[10pt]{amsart}

\usepackage{amsmath}
  \usepackage{paralist}
  \usepackage{graphics} 
  \usepackage{epsfig} 
\usepackage{graphicx}  \usepackage{epstopdf}

 \usepackage[colorlinks=true]{hyperref}
\hypersetup{urlcolor=blue, citecolor=red}

  \textheight=8.2 true in
   \textwidth=5.0 true in
    \topmargin 30pt
     \setcounter{page}{1}



\newtheorem{theorem}{Theorem}[section]
\newtheorem{corollary}[theorem]{Corollary}
\newtheorem{main}{Main Theorem}
\newtheorem*{main*}{Main Theorem}
\newtheorem{lemma}[theorem]{Lemma}
\newtheorem{proposition}[theorem]{Proposition}
\newtheorem{conjecture}[theorem]{Conjecture}

\theoremstyle{definition}
\newtheorem{definition}[theorem]{Definition}
\newtheorem{remark}[theorem]{Remark}

\title[Running heading with forty characters or less]
      {Schmidt Games and Nondense forward Orbits of certain Partially Hyperbolic Systems}

\author[first-name1 last-name1 and first-name2 last-name2]{Weisheng Wu}

\subjclass{}
 \keywords{Schmidt games, Nondense orbits, Hausdorff dimension, Partially Hyperbolic diffeomorphisms, Conformality}


\address{Department of Mathematics, Pennsylvania State University, University Park, PA 16802, USA}
 \email{wu@math.psu.edu}


\begin{document}
\maketitle
\markboth{Schmidt Games and Nondense Forward Orbits of certain Partially Hyperbolic Systems}
{Schmidt Games and Nondense Forward Orbits of certain Partially Hyperbolic Systems}
\renewcommand{\sectionmark}[1]{}

\begin{abstract}
Let $f: M \to M$ be a partially hyperbolic diffeomorphism with conformality on unstable manifolds. Consider a set of points with nondense forward orbit: $E(f, y) := \{ z\in M: y\notin \overline{\{f^k(z), k \in \mathbb{N}\}}\}$ for some $y \in M$. Define $E_{x}(f, y) := E(f, y) \cap W^u(x)$ for any $x\in M$. Following a method of Broderick-Fishman-Kleinbock \cite{BFK}, we show that $E_x(f,y)$ is a winning set of Schmidt games played on $W^u(x)$ which implies that $E_x(f,y)$ has full Hausdorff dimension equal to $\dim W^u(x)$. Furthermore we show that for any nonempty open set $V \subset M$, $E(f, y) \cap V$ has full Hausdorff dimension equal to $\dim M$, by constructing measures supported on $E(f, y)\cap V$ with lower pointwise dimension converging to $\dim M$ and with conditional measures supported on $E_x(f,y)\cap V$. The results can be extended to the set of points with forward orbit staying away from a countable subset of $M$.
\end{abstract}

\section{Introduction}
\subsection{Schmidt games, Diophantine approximation and bounded orbits}
In this paper we shall study the Hausdorff dimension of a set of points with nondense forward orbit under certain partially hyperbolic diffeomorphisms. The main tool that we use is Schmidt games, which were first introduced by W.M.Schmidt in \cite{S} in 1966. A winning set of such games is large in the following sense: it is dense in the metric space, and its intersection with any nonempty open subset has full Hausdorff dimension. Moreover, these properties are stable with respect to countable intersections. See Section $3$ or \cite{S} for more details. Recall that a real number $x$ is called badly approximable if $\|x-p/q\| > c(x)/q^2$ for any rational $p/q$. The classical theorem of Jarnik and Besicovitch states that the set of badly approximable numbers has full Hausdorff dimension $1$. Schmidt proved in \cite{S} that the set of badly approximable numbers is a winning set of Schmidt games and hence has full Hausdorff dimension $1$. Moreover, Schmidt proved the set of badly approximable systems of linear forms is a winning set of Schmidt games and hence has full Hausdorff dimension (cf. \cite{S2}).

There is a well known connection between Diophantine approximation and bounded orbits of flows on Homogeneous spaces. Let $G=SL(2,\mathbb{R})$, $\Gamma=SL(2,\mathbb{Z})$, and $g_t=\left(
                                                \begin{array}{cc}
                                                  e^{-t} & 0 \\
                                                  0 & e^t\\
                                                \end{array}
                                              \right)
$. Then $\alpha$ is badly approximable iff the orbit $\{g_t\left(
                                                           \begin{array}{cc}
                                                             1 & 0 \\
                                                             \alpha & 1 \\
                                                           \end{array}
                                                         \right)
\Gamma \mid t \geq 0\}$ is bounded in $G/\Gamma$. Then Jarnik-Besicovitch theorem can be reformulated as follows: the set of points with bounded orbit under $g_t$ in $G/\Gamma$ has full Hausdorff dimension $3$. Dani applied Schmidt games to generalize the above result to the following two cases:

\begin{theorem} \label{Dani1}
(cf. \cite{D1}) Let $G=SL(n,\mathbb{R})$, $\Gamma=SL(n,\mathbb{Z})$, and
$$g_t= \text{diag}(e^{-t}, \cdots, e^{-t}, e^{\lambda t}, \cdots, e^{\lambda t}),$$
where $\lambda$ is such that the determinant of $g_t$ is $1$. Then the set of points with bounded orbit has full Hausdorff dimension equal to $\dim G$.
\end{theorem}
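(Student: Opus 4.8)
(This is the Schmidt-game argument of \cite{D1}; it also serves as a template for the partially hyperbolic setting below.) The plan is to show that
\[
E:=\{x\in G/\Gamma:\ \{g_t x:t\ge 0\}\text{ is bounded}\}
\]
is an $\alpha$-winning set for Schmidt's game, played in the exponential charts of $G/\Gamma$, for some $\alpha\in(0,\tfrac12]$ that does not depend on the chart. Since an $\alpha$-winning subset of a smooth manifold meets every nonempty open set in a set of full Hausdorff dimension (Section $3$, or \cite{S}), this gives $\dim_H E=\dim G$.

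First I would reduce to the expanding horosphere. Write $\mathfrak g=\mathfrak g^-\oplus\mathfrak g^0\oplus\mathfrak g^+$ for the decomposition into the $\mathrm{Ad}(g_1)$-eigenspaces with eigenvalue $<1$, $=1$, $>1$; here $H^+:=\exp\mathfrak g^+$ is an abelian unipotent subgroup isomorphic to $\mathbb R^{d}$ with $d=k(n-k)$ ($k$ = number of entries $e^{-t}$ in $g_t$), $H^-:=\exp\mathfrak g^-$ is the contracting one, and $H^0$ is the centralizer of $g_t$. Since $g_t h^- g_{-t}\to e$ as $t\to+\infty$ for $h^-\in H^-$, and $g_t h^0 g_{-t}=h^0$ for $h^0\in H^0$, the forward orbit of $h^- h^0 h^+ x_0$ is bounded iff that of $h^+ x_0$ is. Thus in a local product chart $H^-_{\mathrm{loc}}\times H^0_{\mathrm{loc}}\times H^+_{\mathrm{loc}}$ at an arbitrary $x_0$,
\[
E\cap(\text{chart})=H^-_{\mathrm{loc}}\times H^0_{\mathrm{loc}}\times\bigl(E\cap H^+_{\mathrm{loc}}x_0\bigr),
\]
so by the product formula for Hausdorff dimension it is enough to prove that $W_{x_0}:=\{h\in H^+_{\mathrm{loc}}:\{g_t h x_0:t\ge 0\}\text{ bounded}\}$ is $\alpha$-winning on $H^+\cong\mathbb R^{d}$: then $\dim_H W_{x_0}=d$ and the chart piece of $E$ has full dimension $\dim H^-+\dim H^0+d=\dim G$.

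To play the game on $H^+$, recall Mahler's compactness criterion: $G/\Gamma\setminus\mathfrak S_\epsilon$ is compact, where $\mathfrak S_\epsilon:=\{x:\|xv\|\le\epsilon\ \text{for some}\ v\in\mathbb Z^n\setminus\{0\}\}$, so a forward orbit is bounded iff it avoids $\mathfrak S_\epsilon$ for some $\epsilon>0$. Fix $\alpha$ small and $\beta\in(0,1)$. On turn $j$ Bob plays a ball $B_j\subset H^+_{\mathrm{loc}}$ of radius $\rho_j\approx\rho_0(\alpha\beta)^{j}$, and I choose a time $t_j$ comparable to $j\log\tfrac1{\alpha\beta}$ so that $h\mapsto g_{t_j}hx_0$ expands $B_j$ to roughly unit size. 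The crucial estimate is that there are $N=N(n)$ and suitable $\epsilon_j>0$ with
\[
\{h\in B_j:\ g_{t_j}h x_0\in\mathfrak S_{\epsilon_j}\}\ \subset\ \text{a union of at most }N\text{ balls of radius }\le\alpha\rho_j .
\]
Granting this, Alice places $A_j\subset B_j$ of radius $\alpha\rho_j$ avoiding all $N$ of them (possible once $\alpha$ is small in terms of $N$ and $d$), so the common point $h_\infty$ of the nested balls satisfies $g_{t_j}h_\infty x_0\notin\mathfrak S_{\epsilon_j}$ for all $j$. Interpolating over the bounded-length windows $[t_{j-1},t_j]$ by uniform continuity of the flow, and using that the orbit on $[0,t_0]$ stays in a fixed compact set, upgrades this to $g_t h_\infty x_0\notin\mathfrak S_\epsilon$ for all $t\ge 0$ and some fixed $\epsilon>0$. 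Hence $h_\infty\in W_{x_0}$, so $W_{x_0}$ is $\alpha$-winning.

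The one genuinely hard step is the displayed covering estimate: bounding, \emph{uniformly in} $j$, the number and radii of the pieces of $B_j$ on which the dilated orbit is deep in the cusp at time $t_j$, even though $g_{t_j}x_0$ itself may drift toward the cusp. This is where the arithmetic of $\mathrm{SL}(n,\mathbb Z)$ enters: on a unit-size piece of a horosphere only boundedly many primitive integer vectors can be short at all (geometry of numbers, successive minima), for each such $v$ the map $h\mapsto g_{t_j}h x_0\,v$ is affine with controlled derivative in the dilated coordinates, so ``$\|\cdot\|\le\epsilon_j$'' carves out a single small sub-ball, and one arranges the constants so these sub-balls have radius $\le\alpha\rho_j$. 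The remaining ingredients---Mahler's criterion, the horospherical product decomposition, the interpolation, and the implication winning $\Rightarrow$ full Hausdorff dimension---are standard or recalled in Section $3$.
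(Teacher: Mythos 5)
First, note that the paper does not prove this statement: Theorem \ref{Dani1} is quoted from Dani \cite{D1} as motivation, so there is no in-paper proof to compare against. Your sketch does follow the correct overall architecture (reduction to the expanding horospherical subgroup, Mahler's criterion, a Schmidt game on $H^+\cong\mathbb{R}^{k(n-k)}$, then the product inequality for Hausdorff dimension), which is also the linear model for what the paper does in Section $4$ on unstable manifolds. But the step you yourself flag as the hard one is misstated in a way that matters.

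The displayed covering estimate is wrong as written. For $h=\bigl(\begin{smallmatrix} I_k & 0\\ A & I_{n-k}\end{smallmatrix}\bigr)\in H^+$ and a lattice vector $w=x_0v$ with components $(w_1,w_2)$, the condition $\|g_{t_j}hx_0v\|\le\epsilon_j$ amounts to $\|Aw_1+w_2\|\le\epsilon_j e^{-\lambda t_j}$, which (for $w_1\neq 0$) is a thin neighborhood of an \emph{affine subspace of codimension $n-k$} in $\mathbb{R}^{k(n-k)}$ --- a slab, not a small ball, whenever $k\ge 2$. (For $n-k=1$, $k\ge2$ this is exactly the badly-approximable-linear-forms case, where the bad sets are hyperplane neighborhoods; this is precisely why the ``absolutely decaying'' condition the paper alludes to in Section $4.1$ was introduced in \cite{BFK}.) So Alice must avoid finitely many thin slabs, not finitely many balls; this is still possible because Lebesgue measure of an $\alpha\rho$-neighborhood of a hyperplane inside a $\rho$-ball is $O(\alpha)$ times the ball's measure, but it is a different geometric lemma from the one you invoke. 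A second, structural gap: you choose $\alpha$ ``small in terms of $N$,'' but the number $N$ of dangerous vectors per window depends on the window length $t_j-t_{j-1}\asymp\log\frac{1}{\alpha\beta}$, hence on $\beta$; since $\alpha$-winning requires a single $\alpha$ working for \emph{all} $\beta$, you cannot let $\alpha$ depend on $N(\beta)$. The standard fix --- and the one the paper itself uses in its Main Theorem via Lemma \ref{Ne}, the parameter $r$, and the condition $(1-\epsilon)^rN<1$ --- is to group $r$ turns into one step, remove only a definite fraction $\epsilon=\epsilon(\alpha)$ of the bad sets per turn, and exploit that $N$ grows only linearly in $r$ while $(1-\epsilon)^r$ decays exponentially. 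With those two repairs the argument goes through; as written, it does not.
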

\begin{theorem}\label{Dani2}
(cf. \cite{D2}) Let $G$ be a connected semisimple Lie group of $\mathbb{R}$-rank $1$, $\Gamma$ a lattice in $G$, and $g_t$ is not quasiunipotent, that is, $\text{Ad\ }g_1$ has an eigenvalue with modulus other than $1$. Then the set of points with bounded orbit has full Hausdorff dimension equal to $\dim G$.
\end{theorem}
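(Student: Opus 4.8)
The plan is to follow the Schmidt-game strategy along the expanding horospherical foliation of $g_t$, in the spirit of Dani's argument and of the constructions carried out later in this paper. Since $G$ has $\mathbb{R}$-rank one and $g_t$ is not quasiunipotent, the expanding horospherical subgroup $U^{+}$ of $g_t$ is nontrivial; after conjugating we may assume $g_t=\exp(tX)\,m_t$, with $\exp(tX)$ in a one-parameter $\mathbb{R}$-split subgroup $A$ and $m_t$ in the compact part $M$ of the centralizer $Z=Z_G(A)=MA$, so that $G=U^{-}ZU^{+}$ near $e$ and $\dim U^{+}=\dim U^{-}$, where $U^{-}$ is the contracting horospherical subgroup. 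The group $U^{+}$ is abelian or two-step nilpotent of Heisenberg type, and carries a Carnot--Carath\'eodory metric $d$ with respect to which every $g_t$, $t>0$, is a homothety of ratio $>1$. Set
\[
  E^{+}:=\{x\in G/\Gamma:\ \overline{\{g_t x:t\ge 0\}}\ \text{is compact}\}.
\]
Since $U^{-}$-orbits are strong-stable manifolds, $M$ commutes with $g_t$, and $E^{+}$ is $g_t$-invariant, $E^{+}$ is invariant under left translation by $U^{-}Z$.

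First I would play the game on one horospherical plaque. Fix $z_0\in G/\Gamma$, identify a neighbourhood of $z_0$ in $U^{+}z_0$ with a ball of $(U^{+},d)$, and show that $E^{+}\cap U^{+}z_0$ is $\alpha$-winning for Schmidt's game on $(U^{+},d)$, for a suitable $\alpha$, by the standard dodging strategy. Fix a compact set $K\subset G/\Gamma$ provided by the Dani--Margulis quantitative non-divergence estimate, so that for every $u\in U^{+}$, every $r>0$ and every base point, the Haar-proportion of $B_d(u,r)$ carried out of $K$ by $g_{t(r)}$ --- where $t(r)$ is the time needed to expand a radius-$r$ ball to unit scale --- is at most a fixed constant $<1$. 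When Bob presents a ball of radius $\rho$, Alice uses this estimate together with the fact that $g_{t(\rho)}$ is a homothety of $(U^{+},d)$ to pick inside it a $d$-subball of radius $\alpha\rho$ whose $g_{t(\rho)}$-image lies in $K$; for $\alpha$ small enough such a move is available, and by performing it along a sufficiently sparse subsequence of rounds Alice guarantees $g_t u_\infty z_0\in K$ for a sequence of times $\to\infty$ with bounded gaps, where $u_\infty$ is the nested-intersection point. By uniform continuity of the flow on bounded time intervals, the whole forward orbit of $u_\infty z_0$ then stays in a fixed compact set, so $u_\infty z_0\in E^{+}$. As $(U^{+},d,\mathrm{Haar})$ is Ahlfors regular, in particular Federer, this winning property yields $\dim_H\!\big(E^{+}\cap U^{+}z_0\big)=\dim U^{+}$, and likewise for the intersection with any nonempty relatively open subset.

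To upgrade this to $\dim_H(E^{+}\cap V)=\dim G$ for an arbitrary nonempty open $V\subset G/\Gamma$, I would argue in a product chart $U^{-}\times Z\times U^{+}$ lying inside $V$. For each $n$, the winning property supplies a compactly supported probability measure $\nu_n$ on $E^{+}\cap U^{+}z_0$ all of whose lower pointwise dimensions are $\ge\dim U^{+}-1/n$. Since $E^{+}$ is $U^{-}Z$-invariant, translating $\nu_n$ by small elements of $U^{-}Z$ keeps its support in $E^{+}$; integrating these translates against a smooth density on the transverse $U^{-}Z$-factor produces a probability measure $\mu_n$ supported on $E^{+}\cap V$ whose conditional measures on the $U^{+}$-plaques are translates of $\nu_n$. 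The local product structure together with the Federer property of Haar measure then forces the $\mu_n$-almost-everywhere lower pointwise dimension to be $\ge(\dim U^{+}-1/n)+\dim(U^{-}Z)=\dim G-1/n$, so the mass distribution principle gives $\dim_H(E^{+}\cap V)\ge\dim G-1/n$; letting $n\to\infty$ finishes the proof, the opposite inequality being trivial. This is exactly the measure-construction mechanism used later in the paper.

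The main obstacle is the dodging estimate of the second step: one must show, uniformly at every scale and in every base point, that a definite proportion of an expanding horospherical ball has forward $g_t$-orbit contained in a fixed compact set. This is precisely where the Dani--Margulis non-divergence estimates for the $U^{+}$-action enter, and it requires extra care in the Heisenberg cases, where the relevant balls are Carnot balls and the homothety property of $g_t$ must be invoked in the metric adapted to the grading. By contrast, the pointwise-dimension bookkeeping in the last step is routine once Haar measure is known to be Federer.
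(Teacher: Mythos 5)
First, note that the paper does not prove this statement: Theorem \ref{Dani2} is quoted from Dani \cite{D2} as background, so there is no internal proof to compare against. Your proposal is nevertheless a reasonable reconstruction, and its two-step architecture (leafwise winning on the expanding horospherical foliation, then a Fubini-type measure construction over the transverse $U^-Z$-directions to promote leafwise dimension to full dimension) is exactly the methodology this paper uses for partially hyperbolic systems, and is in the spirit of Dani, Kleinbock--Margulis \cite{KM} and Kleinbock--Weiss \cite{KW2}. One difference of route worth recording: Dani's original rank-one argument detects unboundedness via the finitely many cusps (penetration into $\Gamma$-translates of horoballs), so the dodging step avoids a countable family of horoball shadows, whereas you invoke averaged quantitative nondivergence; both inputs are legitimate, but as you state it the nondivergence claim is false --- a unit-size horospherical ball based at a point sufficiently deep in a cusp lies entirely outside any fixed compact $K$, so the estimate cannot hold ``for every base point.'' You must either use the dichotomy form of nondivergence or, more simply, observe that the game itself inductively confines the relevant base points $g_{t_k}u_kz_0$ to a fixed compact set (consecutive checking times have bounded gaps), which is where the uniform version is valid. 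This is repairable but is the actual content of the dodging lemma, not a citation.

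The more serious gap is the metric bookkeeping in the Heisenberg-type cases ($SU(n,1)$, $Sp(n,1)$, $F_4^{-20}$). You restore conformality of $g_t$ by passing to the Carnot--Carath\'eodory metric on $U^+$, but then winning plus Ahlfors regularity gives full Hausdorff dimension \emph{in the CC metric}, namely the homogeneous dimension $Q=m_1+2m_2$, not the Riemannian dimension $\dim U^+=m_1+m_2$ that the theorem (and your final Frostman computation, which uses Riemannian balls of $G/\Gamma$) requires. CC and Riemannian metrics are only H\"older equivalent, so Hausdorff dimension does not transfer automatically. The transfer does work here --- a Riemannian $r$-ball in a plaque is covered by $\asymp r^{-m_2}$ CC-balls of radius $r$, so a measure with $\nu_n(B_{CC}(z,\rho))\le C\rho^{Q-\epsilon}$ satisfies $\nu_n(B_{\mathrm{Riem}}(z,r))\le Cr^{Q-m_2-\epsilon}=Cr^{\dim U^+-\epsilon}$ --- but this covering argument must be supplied; it is precisely the step the present paper sidesteps by \emph{assuming} conformality of $Tf|_{E^u}$ with respect to the Riemannian metric. (The alternative, keeping Riemannian balls and a non-conformal $g_t$, is what forces the modified Schmidt games of \cite{KW2}.) With these two points repaired, the rest --- the $U^-Z$-invariance of the bounded-orbit set, the smooth product chart $U^-\times Z\times U^+$, and the pointwise-dimension additivity --- is sound and indeed cleaner than in the paper's setting, since the foliation here is real-analytic rather than merely absolutely continuous.
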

Theorem \ref{Dani1} is equivalent to say that the set of  badly approximable systems of linear forms has full Hausdorff dimension. Theorem \ref{Dani2} implies that for a rank one locally symmetric spaces of noncompact type with finite volume, the set of unit vectors tangent to a bounded geodesic has full Hausdorff dimension equal to the dimension of the unit tangent bundle. The result was strengthened by Aravinda and Leuzinger (cf. \cite{AL}) to be that the set of unit vectors on a non-constant $C^1$ curve in the unit tangent sphere at a point for which the corresponding geodesic is bounded, is of Hausdorff dimension $1$ by applying Schmidt games.

Theorem \ref{Dani1} and Theorem \ref{Dani2} motivate a conjecture of Margulis which was solved by Kleinbock and Margulis:
\begin{theorem}\label{Kleinbockmargulis}
(cf. \cite{KM}) Let $G$ be a connected semisimple Lie group without compact factors, $\Gamma$ an irreducible lattice in $G$, $F = \{g_t|t \in \mathbb{R}\}$ a one-parameter nonquasiunipotent subgroup of $G$, and $Y \subset G/\Gamma$ a closed $F$-invariant set of Haar measure $0$. Then for any nonempty open subset $V$ of $G/\Gamma$:
\begin{equation*}
\dim(\{x \in V | Fx \text{\ is bounded and\ } \overline{Fx} \cap Y = \emptyset\}) = \dim(G/\Gamma).
\end{equation*}
\end{theorem}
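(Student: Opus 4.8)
\medskip
\noindent\textit{Proof sketch (the plan).}
The plan is to reduce Theorem~\ref{Kleinbockmargulis} to a statement about the expanding horospherical foliation of $F$ and then to run Schmidt's game leaf by leaf. Two preliminary reductions come first. If $G/\Gamma$ is compact there is nothing to prove about boundedness, so assume it is not. Next, using the Jordan decomposition $g_1 = su$ into commuting semisimple and unipotent parts, and observing that the quasiunipotent part of $g_1$ acts without expansion or contraction and, essentially isometrically, affects neither boundedness nor the structure of orbit closures, one may assume $\mathrm{Ad}\,g_1$ is $\mathbb R$-diagonalizable. Since $G$ is semisimple, $\mathrm{Ad}(G)$ preserves the Killing form, so $\det\mathrm{Ad}\,g_1 = 1$; together with nonquasiunipotence this forces both the expanding horospherical subgroup $H^+ = \{h : g_{-t}hg_t \to e \ (t\to\infty)\}$ and the contracting one $H^-$ to be nontrivial, and locally $G \cong H^-\,Z\,H^+$ where $Z$ is the centralizer of $\{g_t\}$.

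The core assertion is the following. \emph{For every $x_0 \in G/\Gamma$, the set $\{h \in H^+ : Fhx_0 \text{ is bounded and } \overline{Fhx_0}\cap Y = \emptyset\}$ is a winning set for Schmidt's game played on a ball of $H^+$, with winning parameter independent of $x_0$.} Granting this, full dimension in $V$ follows by a slicing argument: shrink $V$ to a product box adapted to the local splitting $H^-Z H^+$; by the core assertion and the properties of winning sets recalled in Section~3, for each choice of the $H^-Z$-coordinate the corresponding $H^+$-plaque contributes a subset of the target set of Hausdorff dimension $\dim H^+$. A Fubini/Marstrand-type lower bound (the dimension of a set is at least the dimension of the base plus the essential supremum of the dimensions of the fibres) then yields $\dim \ge \dim H^- + \dim Z + \dim H^+ = \dim(G/\Gamma)$, and the opposite inequality is trivial.

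To prove the core assertion I would play at the dynamical scale. Identify a ball of $H^+$ with a Euclidean ball via $\exp$. If Bob opens with radius $r_0$ and the contraction ratios are $\alpha$ (Alice) and $\beta$ (Bob), then Bob's $k$-th ball $B_k$ has radius $r_k = (\alpha\beta)^k r_0$; choose times $t_k \uparrow \infty$ with $g_{t_k}$ expanding $H^+$ by about $r_0/r_k$, so that $g_{t_k}B_k x_0$ has definite size in $G/\Gamma$. Let $Z_k$ be the union of the depth-$k$ cusp region and the $(1/k)$-neighborhood of $Y$. Alice aims to keep $g_{t_k}(\cdot)x_0$ out of $Z_k$ for all large $k$; by countable stability of the winning condition the resulting point then has bounded orbit avoiding $Y$. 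The decisive input is a \emph{uniform} estimate: the part of $B_k$ on which $g_{t_k}(\cdot)x_0 \in Z_k$ is covered by at most $N$ balls of radius $\le \alpha r_k$, with $N$ independent of $k$, after which a small enough $\alpha$ (depending on $N$) lets Alice always dodge. For the cusp this estimate is reduction theory — a cusp cross-section is a nilmanifold and an $O(1)$-size expanding plaque meets it in boundedly many sheets, each thin in the expanding direction. For $Y$ one uses that it is $F$-invariant, closed, and Haar-null: by regularity of the finite Haar measure its $(1/k)$-neighborhood has measure tending to $0$, and $F$-invariance transfers this to each expanding plaque, controlling the covering number.

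The step I expect to be the genuine obstacle is non-conformality of the $g_t$-action on $H^+$: when $H^+$ carries several distinct weights, $g_{t_k}B_k x_0$ is not a ball but a very eccentric ellipsoid, and the naive ``match the scale'' game collapses. This is precisely the difficulty that leads the present paper to impose conformality on unstable manifolds, and the point Kleinbock--Margulis must work around — for instance by playing with a rescaled, time-dependent metric, or by using the weight filtration of $H^+$ and running an inductive game inside the most-expanded sub-plaque, or by restricting to a diagonal one-parameter subgroup conformal on a suitable subfoliation. Carrying the uniform count $N$ through in that anisotropic setting, and coordinating the cusp geometry with the set $Y$, is where the real work sits; the remainder is bookkeeping with the axioms of Schmidt's game from Section~3.
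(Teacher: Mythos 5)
First, a framing point: the paper does not prove this theorem. It is quoted as background from \cite{KM}, and the paper explicitly defers the ``winning on the expanding horospherical subgroup'' version to \cite{KW2}. So your proposal has to stand on its own, and as written it leaves the decisive steps as acknowledged or unacknowledged gaps. The most serious one is the one you flag yourself: when $\mathrm{Ad}\,g_1|_{\mathrm{Lie}(H^+)}$ has several distinct eigenvalue moduli, the renormalized plaques $g_{t_k}B_kx_0$ have eccentricity growing with $k$, and the uniform covering number $N$ that your strategy hinges on simply does not exist for the ordinary Schmidt game. This is not bookkeeping to be ``carried through'': it is the reason \cite{KM} do not use Schmidt games at all, but instead directly build a strongly tree-like family of tessellation domains adapted to the dynamics and estimate the dimension of its limit set (the construction this paper reproduces as Lemma \ref{treelike}), and it is the reason \cite{KW1,KW2} had to modify the game itself, replacing metric balls by dynamically defined sets. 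Listing candidate workarounds without executing one means the core assertion is not proved. A second gap sits inside your treatment of $Y$: ``Haar-null $\Rightarrow$ small-measure neighborhoods, and $F$-invariance transfers this to each expanding plaque'' is not an argument. Passing from a measure bound in $G/\Gamma$ to a bound on a single expanded plaque requires equidistribution of such plaques (mixing of $g_t$, which is where irreducibility of $\Gamma$ and absence of compact factors enter), and even then one obtains a measure estimate on the plaque, not the covering-number bound by $\alpha r_k$-balls that the game needs; converting one into the other is exactly where the tessellation/Federer structure does real work. The cusp estimate for a general irreducible lattice likewise needs quantitative reduction theory, not just ``boundedly many sheets.''

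Your final slicing step is also stated as a false black box. There is no general ``Fubini for Hausdorff dimension'' asserting that the dimension of a set is at least the dimension of the base plus the (essential supremum or infimum of the) fiber dimensions; this fails without additional structure, which is precisely why Section 5 of this paper refuses to invoke Marstrand directly and instead constructs, measurably in the transverse coordinate, Frostman measures of exponent close to $\dim W^u$ on the fiber sets, integrates them against the transverse volume, and verifies the Frostman condition for the resulting measure (Lemma \ref{frostman}, Propositions \ref{measurable1}--\ref{measurable}, Lemma \ref{frostman2}). In the homogeneous setting the transversal structure is smooth, which helps, but one still must produce the fiber measures (they come from the winning/tree-like structure, not from the mere statement ``the fiber has dimension $\dim H^+$'') and address their measurable dependence on the base point. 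To repair the proposal you would either follow \cite{KM} (tree-like families plus the measure construction) or \cite{KW2} (modified games on $H^+$ plus the same fibering argument); in either case the anisotropy of $\mathrm{Ad}\,g_t$ on $H^+$ and the measure-to-covering-number conversion are the places where the actual mathematics lives.
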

Recently Kleinbock and Weiss developed a type of modified Schmidt games in \cite{KW1} and proved that the set in Theorem \ref{Kleinbockmargulis} restricted in the unstable horospherical subgroup $H$ is a winning set of modified Schmidt games (cf. \cite {KW2}). It strengthens Theorem \ref{Kleinbockmargulis} since a winning set of modified Schmidt games has stronger properties than having full Hausdorff dimension equal to $\dim H$. On the other hand, Kleinbock and Weiss showed in \cite {KW1} that the set of weighted badly approximable vectors is winning and has full Hausdorff dimension. Recently applying Schmidt games, Dani and Shah \cite{DS} show that a large class of Cantor-like sets of $\mathbb{R}^d$ contains uncountably many badly approximable numbers $(d=1)$ or badly approximable vectors $(d \geq 2)$.

\subsection{Nondense orbits}
It is natural to ask for a dynamical system with hyperbolic behavior on a smooth compact manifold $M$, if it is true that the set of points with orbit staying away from some point has full Hausdorff dimension. Let $f: M \rightarrow M$ be a smooth diffeomorphism, and $y \in M$. Denote
\begin{equation} \label{e:nondenseset}
E(f, y):= \{ z\in M: y\notin \overline{\{f^k(z), k \in \mathbb{N}\}}\}.
\end{equation}
By definition, any point in $E(f, y)$ has a nondense forward orbit in $M$. 

Suppose that $f$ is a partially hyperbolic diffeomorphism, and denote
\begin{equation}\label{e:nondense}
\begin{aligned}
E_{x}(f, y) &:= E(f, y) \cap W^u(x),
\end{aligned}
\end{equation}
for any $x \in M$. We shall prove the following two main theorems in this paper; see Section $2$ for more details and other results.
\begin{main*}
Assume either
\begin{enumerate}
  \item $f$ has one dimensional unstable distribution $E^u$;
OR
  \item if $\dim E^u \geq 2$, $f$ is conformal on unstable manifolds, i.e., for each $x\in M$, the derivative map $T_xf|_{E_x^u}$ is a scalar multiple of an isometry.
\end{enumerate}
Then $E_{x}(f, y)$ is a winning set of Schmidt games played on $W^u(x)$.
\end{main*}

\begin{main*}
For any nonempty open subset $V$ of $M$, $\dim_H(E(f, y)\cap V)=n$, where $n=\dim M$.
\end{main*}

There are some previous results on the Hausdorff dimension of the set of points with nondense orbit. By applying Schmidt games, Dani proved for toral endomorphisms:
\begin{theorem}
(cf. \cite {D}) If $f$ is a semisimple toral endomorphism on $\mathbb{T}^n$, $y\in \mathbb{Q}^n/\mathbb{Z}^n$, then $E(f, y)$ is a winning set of Schmidt games.
\end{theorem}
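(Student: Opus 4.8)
The plan is to exhibit an explicit winning strategy for Alice, following the Schmidt-game method used for such problems in \cite{BFK} and \cite{D}: reduce to avoiding a countable family of ``thin'' forbidden sets, then dodge them scale by scale. Identify $f$ with multiplication by a semisimple integer matrix $A$ with $\det A\neq 0$. First I would record the reduction
\[
E(f,y)=\bigcup_{m\ge 1}S_m,\qquad S_m:=\{z:\ d(f^kz,y)\ge 1/m\ \text{ for all }k\ge 1\},
\]
and observe that, since a superset of an $\alpha$-winning set is again $\alpha$-winning and Alice may fix her strategy after seeing Bob's opening ball $B_0=B(x_0,r_0)$, it suffices to prove: there is $\alpha>0$ depending only on $A$ such that for every $\beta\in(0,1)$ and every $B_0$, Alice can choose $m=m(A,r_0,\beta)$ and then win the $(\alpha,\beta)$-game with target $S_m$, reaching a point of $S_m\subseteq E(f,y)$. (The adaptive choice of $m$ is forced: a fixed $S_m$ is not winning, since Bob could open inside $f^{-1}(B(y,1/m))$.) The forbidden set for $S_m$ is $\Phi_m:=\bigcup_{k\ge1}f^{-k}\big(B(y,1/m)\big)$.

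The next step is the geometry of $\Phi_m$, where the hypotheses enter. Since $y$ is rational and $A$ is integral, $f^{-k}(y)$ is a coset of the finite subgroup $A^{-k}\mathbb{Z}^n/\mathbb{Z}^n\subset\mathbb{T}^n$, of cardinality $|\det A|^k$, and $f^{-k}\big(B(y,1/m)\big)=\bigcup_{w\in f^{-k}(y)}\big(w+A^{-k}B(0,1/m)\big)$. Using semisimplicity, split $\mathbb{R}^n=E^u\oplus E^{cs}$ into the sum of eigenspaces with $|\lambda|>1$ and with $|\lambda|\le1$, and pick an adapted norm; then $A^{-k}B(0,1/m)$ lies in the product of an $E^{cs}$-neighborhood with an $E^u$-ball of radius $\le Cr_+^{-k}/m$, where $r_+>1$ is the least modulus of an unstable eigenvalue. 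Hence every connected piece of $\Phi_m$ is contained in a slab about some affine hyperplane, of width $\le Cr_+^{-k}/m$, which decays geometrically in $k$. One must separately dispose of the degenerate case $E^u=0$: then every eigenvalue of $A$ has modulus $\le1$, so, the set of eigenvalues being stable under the Galois action, Kronecker's theorem makes each of them a root of unity, and semisimplicity then forces $A^N=\mathrm{Id}$ for some $N$; consequently $E(f,y)=\mathbb{T}^n\setminus\bigcup_{j=0}^{N-1}f^{-j}(y)$ is the complement of a finite set, hence trivially winning. So assume $E^u\neq0$.

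Finally I would run the Schmidt game. At Alice's $i$-th move the playing ball has radius of order $(\alpha\beta)^i r_0$, and at a given scale $\rho$ only those pieces of $\Phi_m$ whose diameter is not much smaller than $\rho$ are relevant; each of them is a hyperplane slab of width $\le Cr_+^{-k}/m$, while the number of pieces of $f^{-k}(y)$ meeting a ball of radius $\rho$ grows at most exponentially in $k$. Choosing $m$ large and $\alpha$ small relative to $\beta$, $r_+$, and the eigenvalue moduli of $A$, one removes these slabs scale by scale --- using several of Alice's moves per scale --- exactly as in Schmidt's proof that the badly approximable numbers form a winning set, and the resulting point of $\bigcap_i B_i$ lies in $S_m$. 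The hard part will be this last step: unlike the badly approximable case, where only $O(1)$ obstructions sit at each scale, here $f^{-k}(y)$ has $|\det A|^k$ points, so many slabs can be in play simultaneously, and the argument must balance the geometric decay $r_+^{-k}$ of the slab widths against the exponential growth of the number of relevant slabs --- using the fact that $f^{-k}(y)$ is a lattice coset, which prevents the slabs from clumping and keeps the ``obstruction density'' in each ball bounded. Semisimplicity is used twice, for the clean slab description and for $A^N=\mathrm{Id}$ in the degenerate case, while the rationality of $y$ is what turns the forbidden sets into neighborhoods of lattice cosets with controlled separation.
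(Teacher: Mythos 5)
This statement is quoted from Dani \cite{D} as background; the paper gives no proof of it, so your sketch can only be judged on its own terms and against the closely related machinery the paper does develop (Lemma \ref{Ne} and the counting Lemmas \ref{onek}, \ref{manyk}). The parts of your sketch that are carried out are correct: the reduction $E(f,y)=\bigcup_m S_m$ with $m$ chosen after Bob's opening ball is legitimate (a strategy may depend on the history, and a superset of a winning set is winning), the description of $f^{-k}(B(y,1/m))$ as $|\det A|^k$ translates of $A^{-k}B(0,1/m)$ along the coset $f^{-k}(y)$ uses the rationality of $y$ correctly, and the degenerate case $E^u=0$ is properly dispatched via Kronecker's theorem and semisimplicity.

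The genuine gap is exactly the step you defer to the end, and as you have set it up it would fail. You enclose each component of $f^{-k}(B(y,1/m))$ in a slab of width $\le C r_+^{-k}/m$, where $r_+$ is the \emph{least} modulus of an unstable eigenvalue, and propose to dodge these slabs when the game ball has comparable radius $\rho\sim r_+^{-k}/m$. But the number of components of $f^{-k}(B(y,1/m))$ meeting a ball of radius $\rho$ is of order $\max\bigl(1,|\det A|^k\rho^n\bigr)$ up to boundary terms, which at the scale $\rho\sim r_+^{-k}/m$ is $\sim\bigl(|\det A|\,r_+^{-n}\bigr)^k/m^n$; this grows exponentially in $k$ whenever $|\det A|>r_+^{\,n}$ (already for $A=\mathrm{diag}(2,3)$ on $\mathbb{T}^2$ one gets $\sim(3/2)^k$ components per relevant ball). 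Since $k\to\infty$ along the game and Alice can only eliminate a fixed fraction of a \emph{bounded} list of obstructions in each of her $r$ moves per stage (this is the content of Lemma \ref{Ne}), your assertion that the lattice structure ``keeps the obstruction density in each ball bounded'' is false at the scale you chose, and the strategy breaks down. The repair --- which is the actual content of Dani's and of \cite{BFK}'s arguments --- is to index each component by its \emph{thinnest} direction: enclose it in a slab of width $\sim r_{\max}^{-k}/m$ about a hyperplane transverse to the most expanded eigendirection ($r_{\max}$ the largest eigenvalue modulus), treat it at scale $\sim r_{\max}^{-k}/m$, and count \emph{distinct hyperplanes} rather than coset points; the spacing of $A^{-k}\mathbb{Z}^n$ in that most contracted direction is itself $\sim r_{\max}^{-k}$, which is what yields $O(1)$ slabs per $k$ and a bounded window of relevant $k$'s per stage (the analogue of Lemmas \ref{onek} and \ref{manyk}). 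This in turn forces a hyperplane-avoidance version of Lemma \ref{Ne} (the ``absolutely decaying'' property of Lebesgue measure used in \cite{BFK}), since the slabs are long in the remaining directions. You would also need to make precise the induction ensuring that components avoided at their own scale stay avoided forever --- the role played by condition \eqref{e: mainequality} in the paper's own proofs. Until the counting lemma is stated and proved with the correct scale-matching, the proof is incomplete.
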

This result was strengthened by Broderick, Fishman, and Kleinbock:
\begin{theorem}\label{BFK}
(cf. \cite{BFK}) For any toral endomorphism $f$ on $\mathbb{T}^n$, any $y\in M$, $E(f, y)$ is a winning set of Schmidt games.
\end{theorem}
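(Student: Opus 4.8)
The plan is to exhibit, for a small enough $\alpha$ depending only on $n$ and on $f$, a strategy for Alice in the $(\alpha,\beta)$-game on $\mathbb{T}^n$ that succeeds for every $\beta\in(0,1)$ and forces the outcome into $E(f,y)$. Since $E(f,y)=\bigcup_{\epsilon>0}\bigcap_{k\ge 0}\bigl(\mathbb{T}^n\setminus f^{-k}(B(y,\epsilon))\bigr)$, it suffices that the point $z=\bigcap_jB_j$ produced by a play avoid every member of the countable family of ``dangerous'' sets $D_k:=f^{-k}(B(y,\epsilon))$, $k\ge 0$, for a single $\epsilon=\epsilon(\alpha,\beta,\rho_0)>0$ that Alice may fix, as small as she wishes, once Bob has revealed $\rho_0$. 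Writing $f=f_A$ with $A\in M_n(\mathbb{Z})$, $\det A\ne 0$, each $D_k$ is a disjoint union of translates of the ellipsoid $A^{-k}B(0,\epsilon)$, arranged periodically along the lattice $A^{-k}\mathbb{Z}^n$; in the model case relevant to the rest of this paper --- $f$ expanding and conformal on its expanding directions --- these translates are round balls of radius $\lambda^{-k}\epsilon$ sitting on a grid of mesh $\lambda^{-k}$, $\lambda>1$, and this is the case I would treat in detail.

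The geometric heart of the matter is then the observation that, once $\alpha$ is small enough, any closed ball $B$ whose radius $\rho$ lies between $\lambda^{-k}$ and a fixed multiple of $\lambda^{-k}/\alpha$ contains a closed sub-ball of radius $\alpha\rho$ disjoint from $D_k$: such a $B$ contains a fundamental domain of the grid, which carries a single dangerous ball, and $\alpha\rho$ is small enough to be slipped into the gap around it; the same sub-ball can be made to miss $D_k,\dots,D_{k+p}$ at once as long as $\lambda^p$ stays below that fixed multiple of $1/\alpha$, since the finer grids constrain the (much smaller) sub-ball only a little more. Alice therefore processes the tasks $k=0,1,2,\dots$ in order: at stage $j$, with $B_j$ of radius $\rho_j=(\alpha\beta)^j\rho_0$, she plays a radius-$\alpha\rho_j$ sub-ball of $B_j$ missing $D_k$ for the least unfinished $k$ and, simultaneously, all $D_m$ that have just become ``urgent'' (the few $m>k$ for which $\rho_j\lambda^{m-k}$ has fallen to the scale of $\epsilon$), marking those tasks done; at the remaining stages she plays concentrically. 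I would then check two bookkeeping points: that the ``ripe'' windows $[\lambda^{-k},(\mathrm{const}/\alpha)\lambda^{-k}]$ for consecutive tasks abut --- which is where $\alpha$ must be small --- so that the decreasing sequence $\rho_j$ cannot slip past all of them; and that when $\beta$ is tiny the batch of tasks becoming urgent at a single stage stays small enough to be absorbed by one move, which is arranged by using Alice's freedom to take $\epsilon$ small depending on $\beta$. Since a finished task stays finished (each later $B_{j'}$ lies in $A_j$), the outcome $z$ lies in no $D_k$, hence $d(f^kz,y)\ge\epsilon$ for all $k\ge 0$ and $z\in E(f,y)$; as $\beta$ was arbitrary, $E(f,y)$ is $\alpha$-winning.

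The part I expect to be the real obstacle is the geometry of $D_k$ for an \emph{arbitrary} toral endomorphism: if $A$ has eigenvalues of unequal modulus the components of $D_k$ degenerate into arbitrarily thin ellipsoids, and eigenvalues of modulus $\le 1$ prevent them from shrinking at all in the corresponding directions, so no fixed $\alpha$ makes the geometric observation above true and the classical ball game collapses. The remedy, due to Broderick--Fishman--Kleinbock, is to play the \emph{hyperplane absolute winning} game instead: a thin ellipsoidal component --- or a whole parallel stack of them, when the grid is coarse in the expanding directions and fine in the others --- is contained in a thin tubular neighbourhood of a single affine hyperplane, which is exactly the kind of set Alice is permitted to delete in that game. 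Carrying this through at every scale, together with a reduction that quotients out the non-expanding directions, shows that $E(f,y)$ is hyperplane absolute winning and hence a winning set for Schmidt's game; in the conformal expanding situation isolated above the direct argument already suffices, and it is that situation which underpins the rest of the present paper.
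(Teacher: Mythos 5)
You should first note what you are being measured against: the paper does not prove this theorem---it quotes it from \cite{BFK}---so the only internal point of comparison is Section 4, where the BFK mechanism is adapted to the partially hyperbolic setting. That mechanism is different from yours: Alice does not evade all dangerous sets in one move; she evades a fixed proportion of the at most $N$ dangerous sets per move (Lemma \ref{Ne}) and iterates over $r$ consecutive moves with $(1-\epsilon)^r N<1$ (see \eqref{e:ner}). Your one-shot batching device is genuinely different and can be made to work in the expanding conformal case, but the step on which everything rests---that one move absorbs the whole batch of $\sim \log(1/(\alpha\beta))/\log\lambda$ tasks that become urgent at a single stage---is asserted rather than computed. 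Indeed your own sufficient condition for multi-task absorption ($\lambda^p$ below a fixed multiple of $1/\alpha$) is violated by the batch size $\lambda^p\sim 1/(\alpha\beta)$ that a small $\beta$ forces; the rescue comes only from summing the exclusion volumes, namely a geometric series with top term $(\mathrm{const}/C)^n\rho^n$ (requiring the threshold mesh to be a \emph{large} multiple $C$ of $\alpha\rho$, not merely comparable to it---at mesh $\approx\alpha\rho$ the exclusion balls of radius $\approx\alpha\rho$ around the $\approx\alpha^{-n}$ lattice points can cover all of $B$) plus a term $(\#\mathrm{batch})\,\epsilon^n\rho^n$, and then choosing $C$ large and $\epsilon$ small depending on $\beta$. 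Without this estimate the strategy is not established even in your model case.

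The genuine gap is that the theorem concerns an \emph{arbitrary} toral endomorphism, and your detailed argument covers only the expanding conformal case. You correctly identify the obstruction in general (components of $f^{-k}(B(y,\epsilon))$ are ellipsoids that degenerate into thin neighbourhoods of affine subspaces, and do not shrink at all in non-expanding directions), but you then resolve it by invoking the hyperplane-absolute-winning machinery ``due to Broderick--Fishman--Kleinbock''---that is, by citing the result you are asked to prove. Neither the ``reduction that quotients out the non-expanding directions'' (the non-expanding eigenspace need not be rational, so there is no quotient torus), nor the count of how many subspace-neighbourhoods must be deleted in each scale window, nor the passage from that game back to an $(\alpha,\beta)$-winning strategy, is carried out. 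For the record, the cited argument stays within the standard Schmidt game: it covers each thin component by a $\delta$-neighbourhood of an affine hyperplane and uses the absolute decay property of Lebesgue measure, $\nu(B(x,\rho)\cap\mathcal{L}^{(\delta)})\lesssim(\delta/\rho)\,\nu(B(x,\rho))$, inside the proportion-per-move scheme above; a single move provably cannot evade all such neighbourhoods at once, which is why your one-shot device does not extend to the general case.
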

In fact, a more general result than Theorem \ref{BFK} is proved in \cite{BFK}: the lifting of $E(f, y)$ to the universal covering $\mathbb{R}^n$ is a winning set of Schmidt games played on $K$, where $K$ is an absolutely friendly subset of $\mathbb{R}^n$. A main application is to prove that the set of badly approximable systems of affine forms has full Hausdorff dimension; see Corollary $1.4$ in \cite{BFK} for more details.

However, it is hard to apply Schmidt games to the non algebraic dynamical systems. If $f$ is expanding or transitive Anosov, Urba\'{n}ski proved the following theorem by a symbolic approach since there exists the Markov partition for the system:
\begin{theorem}
(cf. \cite{U}) Let $f$ be a $C^2$ expanding endomorphism or a transitive $C^2$ Anosov diffeomorphism, then the set of points with nondense forward orbit or full orbit respectively, has full Hausdorff dimension equal to $\dim M$.
\end{theorem}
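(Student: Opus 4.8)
The plan is to establish the lower bound $\dim_H(E(f,y)\cap V)\ge n$ (the upper bound being trivial) by constructing, for each $\ep>0$, a Borel probability measure $\mu=\mu_\ep$ with $\mathrm{supp}\,\mu\subset E(f,y)\cap V$ whose lower pointwise dimension satisfies $\underline{d}_\mu(z):=\liminf_{r\to0}\frac{\log\mu(B(z,r))}{\log r}\ge n-\ep$ at every $z\in\mathrm{supp}\,\mu$; by the mass distribution principle this forces $\dim_H(E(f,y)\cap V)\ge\dim_H(\mathrm{supp}\,\mu)\ge n-\ep$, and letting $\ep\downarrow0$ finishes the proof. Since $\dim_H$ is monotone and every open set contains a small foliation box of the (continuous, $C^1$-leafed) unstable foliation $W^u$, we may assume $V$ is such a box, $V\cong D^u\times T$, with the local plaques $W^u_{\mathrm{loc}}(x):=W^u(x)\cap V$ corresponding to the slices $D^u\times\{t\}$, where $D^u$ is a $k$-disk ($k=\dim E^u$), the leaves are uniformly $C^1$, and $T$ is a smooth $(n-k)$-disk uniformly transverse to them.

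The measure $\mu$ is prescribed through its conditionals along unstable plaques. By the first Main Theorem, $E_x(f,y)$ is a winning set of Schmidt games played on $W^u(x)$ for every $x$; by compactness of $M$ and continuity of $W^u$ the game data can be taken uniform in $x$, so the Cantor-set construction underlying the winning property (cf.\ Section~3 or \cite{S}) furnishes, for each $\ep>0$, a measurable family $x\mapsto\nu_x$ ($x\in T$) of probability measures with $\mathrm{supp}\,\nu_x\subset E_x(f,y)\cap W^u_{\mathrm{loc}}(x)$ and a \emph{uniform} Frostman bound $\nu_x(B(w,r))\le C_\ep\,r^{k-\ep}$ for all $w$ and all $0<r\le r_0$ (here one uses that each plaque is uniformly bi-Lipschitz to $D^u$ on small scales, so its intrinsic metric and the ambient metric are comparable). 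Let $\bar m$ be the transverse measure obtained by disintegrating the Riemannian volume $\mathrm{vol}_M$ along the plaques in $V$; since $f$ is smooth, the unstable foliation is absolutely continuous, so $\bar m$ is equivalent to Lebesgue measure on $T$ and the leaf-conditionals of $\mathrm{vol}_M$ are uniformly comparable to the intrinsic leaf volumes $\mathrm{vol}^u_x$. Put $\mu:=\int_T\nu_x\,d\bar m(x)$. Then $\mathrm{supp}\,\mu\subset\bigcup_{x\in T}E_x(f,y)\cap V=E(f,y)\cap V$, and $\mu$ has conditional measures $\nu_x$ along the unstable plaques, exactly as in the statement.

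For the dimension estimate fix $z\in\mathrm{supp}\,\mu$ and small $r>0$. Since distinct plaques are disjoint, disintegration gives
\[
\mu(B(z,r))=\int_{T_r(z)}\nu_x\bigl(B(z,r)\cap W^u_{\mathrm{loc}}(x)\bigr)\,d\bar m(x),\qquad T_r(z):=\{x\in T:\ W^u_{\mathrm{loc}}(x)\cap B(z,r)\neq\emptyset\}.
\]
The Frostman bound gives $\nu_x(B(z,r)\cap W^u_{\mathrm{loc}}(x))\le C_\ep'\,r^{k-\ep}$ (the intersection lies in an intrinsic $Cr$-ball of the plaque). For $\bar m(T_r(z))$ one uses the Fubini identity coming from absolute continuity, $\mathrm{vol}_M(A)=\int_T\mathrm{vol}^u_x(A\cap W^u_{\mathrm{loc}}(x))\,d\bar m(x)$, together with the elementary facts that $\mathrm{vol}^u_x(B(z,r)\cap W^u_{\mathrm{loc}}(x))\le Cr^k$ always, and $\ge cr^k$ whenever the plaque meets $B(z,r/2)$ (uniform $C^1$ control of the leaves): this yields $\bar m(T_{r/2}(z))\le c^{-1}r^{-k}\,\mathrm{vol}_M(B(z,r))\le C'' r^{n-k}$, hence $\bar m(T_r(z))\le C''' r^{n-k}$. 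Multiplying the two bounds, $\mu(B(z,r))\le C\,r^{n-\ep}$ for all small $r$, so $\underline{d}_\mu(z)\ge n-\ep$, and the theorem follows as above. I expect the main difficulty to be precisely this last step: setting up the fibered measure $\mu$ correctly and, above all, using the absolute continuity of $W^u$ (and the uniform comparability of the volume disintegration with leaf volume) to obtain $\bar m(T_r(z))\le C'''r^{n-k}$ — the local product structure $V\cong D^u\times T$ is only a homeomorphism, with unstable holonomy typically no better than Hölder, so one cannot estimate in bi-Lipschitz coordinates and must pass through the volume disintegration instead. A secondary technical point is the measurable, uniformly Frostman choice of the plaque measures $\nu_x$, which rests on the uniformity of the Schmidt-game parameters supplied by the first Main Theorem.
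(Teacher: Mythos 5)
The statement you are asked to prove is quoted in the paper as Urba\'{n}ski's theorem and is not proved there: the paper explicitly attributes it to a symbolic-dynamics argument using the Markov partitions available for expanding and Anosov systems, while the measure-theoretic construction your proposal reproduces almost verbatim is the paper's proof of its own Main Theorem \ref{main2}, which concerns a different class of maps. Your argument has a genuine gap at its very first step: you invoke the first Main Theorem to conclude that $E_x(f,y)$ is winning on every unstable leaf, but that theorem is established only under the hypothesis that $\dim E^u=1$ or that $Tf|_{E^u}$ is conformal. A general transitive $C^2$ Anosov diffeomorphism (already a hyperbolic automorphism of $\mathbb{T}^3$ with two distinct expanding eigenvalues) satisfies neither hypothesis, and neither does a general $C^2$ expanding endomorphism. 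Conformality is not a technicality here: without it the components of $f^{-k}(\Pi(c))$ inside a leaf are strongly eccentric rather than comparable to balls, the bounded-distortion and counting arguments (Lemmas \ref{onek}--\ref{manykcon}) that bound the number of ``dangerous'' sets Alice must avoid per block of moves break down, and the winning property is simply not available. This is precisely why the quoted theorem is proved by Urba\'{n}ski via Markov partitions rather than by Schmidt games, and why the paper restricts its own results to the conformal case.

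There are two further mismatches with the statement itself. A $C^2$ expanding endomorphism is non-invertible and carries no unstable foliation with a transversal $T$, so the fibered measure $\mu=\int_T\nu_x\,d\bar m(x)$ has no meaning in that case (the whole manifold is the ``expanding direction,'' and one would have to play the game on $M$ directly). And for Anosov diffeomorphisms the theorem concerns nondensity of the \emph{full} two-sided orbit, whereas your construction only forces the forward orbit to avoid $y$; the full-orbit set is a priori smaller, and controlling backward iterates requires a simultaneous construction along stable leaves, which your single-foliation disintegration does not provide. The fibered Frostman estimate in the second half of your argument is sound, but what it proves is the paper's Main Theorem \ref{main2} for conformal partially hyperbolic diffeomorphisms, not the quoted result.
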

We can let $Y$ be a subset of $M$, and $E(f,Y)$ be the set of points with forward orbit staying away from $Y$. Dolgopyat has described in \cite{Dol} that how large the set $Y$ is such that $E(f,Y)$ has full Hausdorff dimension, for piecewise expanding maps of an interval and Anosov diffeomorphisms of two dimensional torus by a symbolic approach.

\subsection{Invariant measures and a conjecture of Katok}
Another motivation of the present paper is a related conjecture of A. Katok. If $f$ is volume preserving and ergodic, it follows that the set $E(f, y)$ has Lebesgue measure zero. Nevertheless, the above results show that the set is still large in the sense of having full Hausdorff dimension. Indeed, for a toral endomorphism or an Anosov diffeomorphism, there exists an abundance of invariant measures, and a Lebesgue measure zero set could be very large under other measures. Katok has conjectured that any smooth diffeomorphism has an abundance of invariant measures with intermediate metric entropies between $0$ and $h_{\text{top}}(f)$, the topological entropy of $f$:

\begin{conjecture}[Katok, cf. \cite {sun3}]
Let $f$ be a $C^r (r>1)$ diffeomorphism on a smooth compact manifold, then for any $\beta \in [0, h_{\text{top}}(f))$, there is an ergodic invariant measure $\mu$ such that $h_{\mu}(f)=\beta$.
\end{conjecture}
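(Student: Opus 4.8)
The plan is to convert the \emph{dimension} information produced above into \emph{entropy} information; I note at the outset that this conjecture is, as far as I know, open in full generality (it is a theorem on compact surfaces for $C^{1+\alpha}$ maps by Katok, and in a number of other special classes), so what follows is a line of attack rather than a finished proof, and I would tailor it to the partially hyperbolic, unstable-conformal setting in which the machinery of the Main Theorems is directly useful. In that setting an ergodic hyperbolic measure $\mu$ should satisfy a clean Ledrappier--Young/Bowen type identity $h_\mu(f)=\delta^u(\mu)\,\chi^u(\mu)$, where $\chi^u(\mu)$ is the common value (by conformality on unstable manifolds) of the unstable Lyapunov exponents and $\delta^u(\mu)\in[0,\dim W^u]$ is the Hausdorff dimension of the conditional measures of $\mu$ on the unstable leaves. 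So it would suffice to build, for each target $\beta\in(0,h_{\text{top}}(f))$, an ergodic measure $\mu_\beta$ that is hyperbolic, has a well-defined unstable exponent, and whose unstable conditionals have dimension exactly $\beta/\chi^u(\mu_\beta)$.

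Concretely I would proceed in four steps. First, $\beta=0$ is realized by the Dirac measure on a periodic orbit. Second, for $0<\beta<h_{\text{top}}(f)$, use the variational principle to choose an ergodic $\mu$ with $h_\mu(f)>\beta$, and then produce a uniformly hyperbolic ``$u$-horseshoe'' $\Lambda$ with $h_{\text{top}}(f|_\Lambda)>\beta$ --- via Katok's horseshoe theorem when $M$ is a surface, or, in the partially hyperbolic conformal case, by locating an $f$-invariant uniformly hyperbolic Cantor set inside an unstable leaf using the Schmidt-game/winning-set constructions underlying the Main Theorems. Third, noting that $f|_\Lambda$ is conjugate to a subshift of finite type of entropy $>\beta$, invoke a Grillenberger-type construction of strictly ergodic subshifts of prescribed topological entropy to extract a strictly ergodic subsystem of entropy exactly $\beta$; its unique invariant measure is ergodic with metric entropy exactly $\beta$. (In the dimension picture one tunes the Cantor-set construction so that the natural self-affine measure on it has unstable dimension $\beta/\chi^u$, which by the identity above amounts to the same thing.) Fourth, one checks ergodicity and that the resulting object is an $f$-invariant Borel probability measure, which is routine.

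The main obstacle is the second step: approximating $h_{\text{top}}(f)$ from below by topological entropies of uniformly hyperbolic subsystems. Katok's horseshoe theorem supplies exactly this when $M$ is a surface, but for a general $C^r$ diffeomorphism in dimension $\ge 3$ no such statement is known, and the hypothesis $r>1$ (rather than $r=\infty$) costs us upper semicontinuity of $\mu\mapsto h_\mu(f)$, so even ``entropy close to $h_{\text{top}}(f)$'' becomes delicate near the top of the spectrum. In the conformal partially hyperbolic setting one can hope to substitute a direct construction of $u$-Cantor sets of prescribed ``size'' by the games developed here, but then one must still establish a valid Ledrappier--Young formula for the resulting measures, verify that they are genuinely hyperbolic with a single well-defined unstable exponent, and check that the unstable-dimension (hence entropy) function so obtained varies continuously and surjects onto the whole interval. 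This ``entropy-by-hyperbolic-subsystems'' step is precisely what makes Katok's conjecture hard, and is the reason it is currently a theorem only for surfaces and a few other special classes.
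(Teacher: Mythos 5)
This statement is not proved in the paper: it is stated verbatim as an open conjecture of Katok, cited only as motivation, and the paper itself says explicitly that ``the conjecture remains open for partially hyperbolic systems'' and only expresses the \emph{hope} that the Schmidt-game constructions might eventually be adapted to produce measures of intermediate entropy. So there is no proof in the paper to compare yours against, and your own framing --- that what you give is a line of attack rather than a finished proof --- is the correct assessment of the situation. You should not present this as a proof of the stated conjecture.

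As a line of attack, your outline is the standard one and your diagnosis of where it breaks is accurate: step two (approximating $h_{\text{top}}(f)$ from below by entropies of uniformly hyperbolic horseshoes) is available via Katok's theorem only when every ergodic measure of nearly maximal entropy is hyperbolic, which holds on surfaces but fails to be known in dimension $\ge 3$; and the loss of upper semicontinuity of $\mu \mapsto h_\mu(f)$ for finite $r$ is a real issue near the top of the spectrum. Two further points deserve flagging. First, the identity $h_\mu(f)=\delta^u(\mu)\chi^u(\mu)$ you invoke is \emph{not} the Ledrappier--Young formula for a genuinely partially hyperbolic $f$: positive Lyapunov exponents in the center bundle $E^c$ also contribute to entropy, so controlling only the unstable conditional dimension does not pin down $h_\mu(f)$; the measures produced by the paper's Schmidt-game construction are moreover not $f$-invariant, so even their unstable dimension does not translate into entropy without substantial additional work. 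Second, step three (Grillenberger/Quas--Soo extraction of a strictly ergodic subsystem of prescribed entropy) requires an honest embedding of a subshift into the dynamics, i.e.\ it presupposes that step two has already been carried out. In short, the proposal correctly locates the open problems but does not close any of them, which is consistent with the statement's status as a conjecture.
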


Katok proved the conjecture for $C^r(r>1)$ diffeomorphisms on compact surfaces (cf. \cite{Katok1}, \cite{Katok2}). Sun proved the conjecture for certain skew product diffeomorphisms in \cite{sun1}, \cite{sun2}, and a weak form of the conjecture for linear toral automorphisms in \cite{sun3}. A recent result of Quas and Soo on ergodic universality implies the conjecture for linear toral automorphisms, and other topological dynamical systems with almost weak specification, asymptotic entropy expansiveness, and the small boundary property (cf. \cite{quassoo}). The conjecture remains open for partially hyperbolic systems.

In this paper we consider the case when $f$ is a partially hyperbolic diffeomorphism with conformality on unstable manifolds. In Section $5$, to show that $E(f,y)$ has full Hausdorff dimension, we shall construct measures on $M$, with conditional measures on unstable manifolds originated from the process of Schmidt games. Our construction involving Schmidt games might be modified to produce an abundance of invariant measures under $f$, and we hope that these invariant measures can possess at least most of intermediate entropies between $0$ and $h_{\text{top}}(f)$, which is an attempt toward Katok's conjecture in the given special case.

\section{Formulation of results}
Let $M$ be a $n$ dimensional smooth, connected, compact Riemannian manifold without boundary and let $f: M \rightarrow M$ be a $C^{1+\alpha}$ diffeomorphism. $f$ is \emph{partially hyperbolic} (cf. for example \cite{RRU}) if there exists a nontrivial $Tf$-invariant splitting of the tangent bundle $TM= E^s \oplus E^c \oplus E^u$ into so called stable, center, and unstable distributions, such that all unit vectors $v^{\sigma} \in E_x^\sigma$ ($\sigma= c,s,u$) with $x\in M$ satisfy
\begin{equation*}
\|T_xfv^s\| \leq \|T_xfv^c\| \leq \|T_xfv^u\|,
\end{equation*}
and
\begin{equation*}
\|T_xf|_{E^s_x}\| <1, \ \ \ \text{\ and\ \ \ \ } \|T_xf^{-1}|_{E^u_x}\| <1,
\end{equation*}
for some suitable Reimannian metric on $M$.

The distributions $E^s$, $E^c$, $E^u$ are H\"{o}lder continuous over $M$ but in general not Lipschitz continuous. The stable $E^s$ and unstable $E^u$ are integrable: there exist so called stable and unstable foliations $W^s$ and $W^u$ respectively such that $TW^s=E^s$ and $TW^u=E^u$. It is well known that the foliations $W^u$ and $W^s$ are transversally absolutely continuous with bounded Jacobians (cf. \cite{BP}, \cite{PS}, \cite{BW}).

For any $y\in M$, the sets $E(f,y)$ and $E_x(f,y)$ are defined as in \eqref{e:nondenseset} and \eqref{e:nondense}, where $x \in M$ is an arbitrary point and $W^u(x)$ is the (global) unstable manifold through $x$. We shall mainly work on unstable manifolds. The set $E(f,y)$ has been shown to be large in the sense of having full Hausdorff dimension, when $f$ is a toral endomorphism, or a transitive Anosov diffeomorphism(cf. \cite{D}, \cite{BFK}, \cite{U}). This is not surprising for hyperbolic toral automorphisms, since there are many nondense invariant sets, such as the periodic points, and other proper invariant sets studied in \cite{Pr}, \cite{Fr}, \cite{Ma}. 

There are many known results on applying Schmidt games to study the Hausdorff dimension of $E(f,y)$ under algebraic dynamical systems (cf. \cite{D}, \cite{BFK}, \cite{KW2}). In the first part of the present paper, we are able to apply Schmidt games to non-algebraic dynamical systems, i.e., partially hyperbolic diffeomorphisms(P.H.D.) with conformality on the unstable manifolds:

\begin{main}\label{main}
Assume either
\begin{enumerate}
  \item $f$ has one dimensional unstable distribution $E^u$;
OR
  \item if $\dim E^u \geq 2$, $f$ is conformal on unstable manifolds, i.e., for each $x\in M$, the derivative map $T_xf|_{E_x^u}$ is a scalar multiple of an isometry.
\end{enumerate}
Then $E_{x}(f, y)$ is a winning set of Schmidt games played on $W^u(x)$.
\end{main}

\begin{remark}\label{cases}
 The assumption in the above theorem is standing throughout the paper. Clearly, case (1) is contained in case (2). However, it is easier to illustrate the idea of the proof in case (1). So we shall prove for case (1) first, and then sketch the proof for case (2).
\end{remark}

As a consequence of Main Theorem \ref{main} and well known results of Schmidt games, one has:
\begin{corollary}\label{corollary}
$E_{x}(f, y)$ is dense in $W^u(x)$ and for any nonempty open subset $U$ of $W^u(x)$, $\dim_H(E_{x}(f, y)\cap U)=\dim W^u(x)$.
\end{corollary}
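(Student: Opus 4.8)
The plan is to obtain the corollary formally from Main Theorem~\ref{main} together with the standard structural properties of Schmidt's $(\alpha,\beta)$ games. Note first that $W^u(x)$, with its intrinsic leaf metric, is a connected, complete $C^1$ Riemannian manifold of dimension $d:=\dim W^u(x)=\dim E^u$, so the game in question is played on a bona fide complete metric space; by Main Theorem~\ref{main} the set $S:=E_x(f,y)$ is $\alpha_0$-winning on $W^u(x)$ for some $\alpha_0\in(0,1)$. (Here ``winning'' means that White has a strategy producing, against any play of Black, a nested sequence of balls whose common point lies in $S$; since the radii shrink geometrically, this common point is unique.)

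For density, fix an arbitrary open ball $B\subset W^u(x)$. Black may open the game with $B_1=B$, and since $S$ is winning, White forces $\bigcap_i B_i$ to be a point of $S$; this point lies in $B_1=B$, so $S\cap B\neq\emptyset$. As $B$ was arbitrary, $S$ is dense in $W^u(x)$.

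For the dimension statement I would use two standard facts. First, if Black's first move is constrained to an open ball $B$ contained in a given nonempty open set $U\subseteq W^u(x)$, the winning strategy for $S$ shows that $S\cap B$ is winning for the game confined to $B$. Second, an $\alpha$-winning subset of an open ball of $\mathbb{R}^d$ (game played in the ball) has full Hausdorff dimension $d$; this is Schmidt's dimension theorem \cite{S}, and since a $C^1$ chart is bi-Lipschitz on such a ball $B$, and bi-Lipschitz maps preserve both the winning property (up to a harmless change of the parameter $\alpha$) and Hausdorff dimension, it transfers to $B\subset W^u(x)$. Therefore $\dim_H(E_x(f,y)\cap U)\ge\dim_H(S\cap B)=d$, while the reverse inequality is automatic because $S\cap U\subseteq W^u(x)$ has dimension $d$. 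Hence $\dim_H(E_x(f,y)\cap U)=\dim W^u(x)$.

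Everything past Main Theorem~\ref{main} is routine, so I expect no genuine obstacle; the only points worth a careful sentence are the transfer of the winning property through bi-Lipschitz coordinate charts and the legitimacy of restricting the game to an open ball. Both are part of the standard Schmidt-game toolkit and are used in the same spirit in \cite{S} and \cite{BFK}, so the assertion is indeed a corollary of Main Theorem~\ref{main}.
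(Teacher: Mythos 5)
Your argument is correct in substance, but it follows a different route from the paper's. The paper does not pass through coordinate charts at all: it invokes Proposition~\ref{HD} (the Kleinbock--Weiss criterion that a winning set on the support of a Federer measure $\mu$ satisfies $\dim_H(S\cap U)\ge \underline{d}_\mu(U)$), and then feeds in Lemmas~\ref{Federer} and~\ref{powerlaw}, which were already established in Section~4.1, to conclude that the leaf volume $\nu$ on $W^u(x)$ is Federer with $\underline{d}_\nu(U)=u$; density is dismissed in one line as a general property of winning sets. Your route instead localizes the game to a small ball $B\subset U$, transfers it through a bi-Lipschitz exponential chart to a ball in $\mathbb{R}^d$, and applies Schmidt's Euclidean dimension theorem. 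Both work, and the two reductions are morally similar; the trade-off is that the paper's version is self-contained modulo results it has already proved (the power law and Federer property of $\nu$, plus a quoted proposition from \cite{KW1}), whereas yours leans on two facts not stated anywhere in the paper -- that the winning property survives bi-Lipschitz maps up to a change of $\alpha$, and that confining Bob's first move to $B$ yields a winning game on $B$ whose outcome set still has full dimension. These are indeed standard (they go back to \cite{S} and Dani), but the first in particular is a genuine lemma with a non-trivial proof (images of balls are no longer balls), so if you take this route you should either cite it precisely or prove it. One cosmetic point: Proposition~\ref{HD} and Schmidt's theorem only give the lower bound; you correctly note, as one should, that the matching upper bound is automatic since the set sits inside a $d$-dimensional manifold.
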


Let $Y$ be a countable subset of $M$ and define similarly:
\begin{equation*}
\begin{aligned}
E(f, Y)&:= \{ z\in M: Y \cap \overline{\{f^k(z), k \in \mathbb{N}\}}= \emptyset\}, \\
E_{x}(f, Y) &:= E(f, Y) \cap W^u(x).
\end{aligned}
\end{equation*}
Due to a result of Schmidt \cite{S}, restated in Proposition \ref{intersection}, we have:
\begin{corollary}\label{corollary2}
$E_{x}(f, Y)$ is a winning set of Schmidt games played on $W^u(x)$. Consequently, it is dense in $W^u(x)$ and for any nonempty open subset $U$ of $W^u(x)$, $\dim_H(E_{x}(f, Y)\cap U)=\dim W^u(x)$.
\end{corollary}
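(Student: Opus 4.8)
The plan is to reduce Corollary \ref{corollary2} to the Main Theorem together with Schmidt's countable-intersection property for winning sets. Write the countable set as $Y = \{y_i : i \in \mathbb{N}\}$. Directly from the definitions, a point $z$ satisfies $Y \cap \overline{\{f^k(z) : k \in \mathbb{N}\}} = \emptyset$ if and only if $y_i \notin \overline{\{f^k(z) : k \in \mathbb{N}\}}$ for every $i$, so
\begin{equation*}
E(f, Y) = \bigcap_{i \in \mathbb{N}} E(f, y_i), \qquad E_{x}(f, Y) = \bigcap_{i \in \mathbb{N}} E_{x}(f, y_i).
\end{equation*}

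Next I would invoke Main Theorem \ref{main}: for each $i$, the set $E_{x}(f, y_i)$ is an $\alpha$-winning set of Schmidt games played on $W^u(x)$. The essential point is that the winning parameter $\alpha$ furnished by the proof of Main Theorem \ref{main} does not depend on the target point $y$; it is determined only by $f$ (the hyperbolicity rates, the conformality constant on $W^u$, and the $C^{1+\alpha}$ data) and the local geometry of $W^u(x)$. Granting this, all the sets $E_{x}(f, y_i)$ are $\alpha$-winning for one and the same $\alpha$, and Proposition \ref{intersection} (Schmidt's result that a countable intersection of $\alpha$-winning sets is again $\alpha$-winning) yields that $E_{x}(f, Y) = \bigcap_i E_{x}(f, y_i)$ is $\alpha$-winning on $W^u(x)$.

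Finally, the "consequently" clause follows from the standard properties of winning sets of Schmidt games recalled in Section 3: a winning set in a complete metric space is dense, and its intersection with any nonempty open subset has full Hausdorff dimension. Applying this to $W^u(x)$ (with its induced Riemannian metric, which makes it a complete metric space) gives that $E_{x}(f, Y)$ is dense in $W^u(x)$ and that $\dim_H(E_{x}(f, Y) \cap U) = \dim W^u(x)$ for every nonempty open $U \subset W^u(x)$, exactly as in Corollary \ref{corollary}.

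The main obstacle is the uniformity claim in the second step. To make it rigorous I would revisit the construction in the proof of Main Theorem \ref{main} and check that every quantitative choice made there — the radius scales at which Bob's balls are intercepted, the number of "forbidden" sub-balls at each stage, and the resulting admissible range of $\alpha$ — is controlled purely by the dynamics of $f$ and the geometry of $W^u(x)$, with the point $y$ entering only through which translates of a fixed small ball must be avoided, not through the size of those balls or their multiplicity. Once that is confirmed, the rest is a formal application of Proposition \ref{intersection} and the known consequences of winning.
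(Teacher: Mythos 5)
Your proposal is correct and follows essentially the same route as the paper: decompose $E_x(f,Y)=\bigcap_i E_x(f,y_i)$, observe that each factor is $\alpha$-winning for the same $\alpha$ from \eqref{e:winning} (which indeed depends only on the constants $C,D,u$ of $W^u(x)$ and not on $y$), apply Proposition \ref{intersection}, and then repeat the argument of Corollary \ref{corollary} (Federer property of the volume measure plus Proposition \ref{HD}) for density and full Hausdorff dimension. Your uniformity check on $\alpha$ is the right point to verify and it does hold in the paper's construction.
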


However, it is hard to play Schmidt games on the whole manifold $M$. A symbolic dynamics approach was developed by M. Urba\'{n}ski to prove that $E(f, y)$ has full Hausdorff dimension on $M$, when $f$ is an expanding endormorphsim or a transitive Anosov diffeomorphsim (cf. \cite{U}). But if $f$ is a partially hyperbolic diffeomorphism, we don't have Markov partition or a symbolic representation of the system. In the second part of this paper, we develop another approach to prove:

\begin{main}\label{main2}
For any nonempty open subset $V$ of $M$, $\dim_H(E(f, y)\cap V)=n$, where $n=\dim M$.
\end{main}

\begin{corollary} \label{countable}
For any nonempty open subset $V$ of $M$, $\dim_H(E(f, Y)\cap V)=n$.
\end{corollary}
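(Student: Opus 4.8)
The plan is to rerun the proof of Main Theorem \ref{main2} essentially verbatim, with the countable set $Y$ in place of the single point $y$ throughout. The upper bound $\dim_H(E(f,Y)\cap V)\le n$ is immediate, since $E(f,Y)\cap V\subset M$ and $\dim M=n$, so only the lower bound needs work. Recall that the proof of Main Theorem \ref{main2} will produce, for every $\ep>0$, a Borel probability measure $\mu_\ep$ supported on $E(f,y)\cap V$ whose conditional measures along unstable plaques are supported on $E_x(f,y)\cap V$ and which has lower pointwise dimension at least $n-\ep$ at $\mu_\ep$-almost every point; a mass distribution argument then gives $\dim_H(E(f,y)\cap V)\ge n-\ep$, and letting $\ep\to 0$ finishes Main Theorem \ref{main2}. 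I would like to run the same construction but with conditional measures supported on $E_x(f,Y)\cap V$ instead of $E_x(f,y)\cap V$.

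The key observation making this possible is that $E(f,Y)=\bigcap_{y\in Y}E(f,y)$, and hence $E_x(f,Y)=\bigcap_{y\in Y}E_x(f,y)$ is a countable intersection of winning sets on $W^u(x)$, each $E_x(f,y)$ being winning by Main Theorem \ref{main}. By Schmidt's stability of winning sets under countable intersections, restated in Proposition \ref{intersection}, the set $E_x(f,Y)$ is again winning on $W^u(x)$ --- this is exactly Corollary \ref{corollary2}. Thus $E_x(f,Y)$ is dense in $W^u(x)$, and for every relatively open $U\subset W^u(x)$ the set $E_x(f,Y)\cap U$ is winning, so it carries Schmidt-game-type measures of dimension arbitrarily close to $\dim W^u(x)$ inside arbitrarily small balls. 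These are precisely the qualitative features of $E_x(f,y)\cap V$ that the construction in Main Theorem \ref{main2} uses on each unstable plaque, so feeding in the corresponding measures for $E_x(f,Y)\cap V$ should yield, for each $\ep>0$, a probability measure $\mu_\ep$ on $E(f,Y)\cap V$ with unstable conditionals on $E_x(f,Y)\cap V$ and lower pointwise dimension at least $n-\ep$ almost everywhere. As before, this gives $\dim_H(E(f,Y)\cap V)\ge n-\ep$ for every $\ep>0$, hence the claimed equality.

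The main --- and essentially only --- obstacle is bookkeeping: I must check that no step in the proof of Main Theorem \ref{main2} secretly uses a quantitative property of $E_x(f,y)$ depending on the individual point $y$ beyond the qualitative winning features above. The transverse absolute continuity of $W^u$ with bounded Jacobians, which is what transports full unstable dimension to full ambient dimension, is independent of $y$, and $y$ should enter only through the choice of the conditional measures; so this verification should be routine. If it turns out that the construction does pin down a fixed escape neighborhood of a single $y$, the fix is standard: at each finite scale of the underlying Cantor-type construction one need only avoid the finitely many points of $Y$ relevant at that scale, and a diagonal argument then recovers the same dimension estimates. Either way the corollary follows.
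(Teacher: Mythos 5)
Your proposal is correct and follows essentially the same route as the paper: deduce that $E_x(f,Y)$ is winning via Schmidt's countable-intersection property (the interleaved strategy behind Proposition \ref{intersection}), then rerun the measure construction of Main Theorem \ref{main2} with conditionals supported on $E_x(f,Y)\cap V$. The one verification the paper actually carries out is the measurability of $x\mapsto\mu_x$ from Proposition \ref{measurable1}, and your observation that at each turn only finitely many preimage sets (those of the single $y_t$ assigned to that turn, with bounded $k$) must be avoided is exactly the point the paper uses to make that argument go through.
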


We describe Schmidt games and the properties of a winning set in Section $3$. In Section $4$, we modify a method from \cite{BFK} and prove Main Theorem \ref{main}, with Schmidt games played on an unstable manifold. We also prove Corollary \ref{corollary} and \ref{corollary2} there. In Section $5$, we prove Main Theorem \ref{main2} by constructing measures supported on $E(f,y)$ with lower pointwise dimension converging to $n$. We construct conditional measures supported on $E_x(f,y)$ in a way that has appeared in the proof of a McMullen's result (cf. \cite{Mc}, \cite{U}). Corollary \ref{countable} is proved at the end.

For simplicity, we denote $\sigma=\dim E^\sigma$, where $\sigma=u, c, s$. If $W$ is a submanifold of $M$, then $B^W(x,r)$ denotes the open ball with center $x$ and radius $r$ in $W$, with respect to the induced Riemannian metric on $W$. Similarly $B^u(x,r)$ denotes an open ball in $W^u(x)$. We always denote as $\nu$ the volume measures on various manifolds if it doesn't cause confusion.

\section{Schmidt Games}

Let $(X, d)$ be a complete metric space. We denote as $B(x,r)$ the ball of radius $r$ with center $x$. If $\omega=(x,r) \in X \times \mathbf{R}_{+}$, we also denote $B(\omega):=B(x,r)$.

Schmidt games are played by the two players, Alice and Bob. Fix $0 < \alpha, \beta <1$ and a subset $S \subset X$ (the target set). Bob starts the game by choosing $x_1 \in X$ and $r_1 >0$ hence specifying a pair $\omega_1=(x_1, r_1)$. Then Alice chooses a pair $\omega'_1=(x_1', r_1')$ such that $B(\omega'_1) \subset B(\omega_1)$ and $r_1'=\alpha r_1$. In the next turn, Bob chooses a pair $\omega_2=(x_2, r_2)$ such that $B(\omega_2) \subset B(\omega_1')$ and $r_2= \beta r_1'$, and so on. In the $k$th turn, Bob and Alice choose $\omega_k=(x_k, r_k)$ and $\omega'_k=(x_k', r_k')$ respectively such that
\begin{equation*}
  B(\omega'_k) \subset B(\omega_k) \subset B(\omega_{k-1}'), \ \ r_k= \beta r_{k-1}', \ \ r_k'=\alpha r_k.
\end{equation*}
Thus we have a nested sequence of balls in $X$:
\begin{equation} \label{e:sequence}
B(\omega_1) \supset B(\omega_1') \supset \cdots \supset B(\omega_k) \supset B(\omega_k') \supset \cdots.
\end{equation}
The intersection of all these balls consists of a unique point $x_{\infty} \in X$. We call Alice the winner if $x_{\infty} \in S$, and Bob the winner otherwise. $S$ is called a $(\alpha, \beta)$-winning set if Alice has a strategy to win regardless of how well Bob plays, and we call such a strategy a $(\alpha, \beta; S)$-winning strategy. $S$ is called $\alpha$-winning if it is $(\alpha, \beta)$-winning for any $0 <\beta <1$. $S$ is called a winning set if it is $\alpha$-winning for some $0 < \alpha <1$.

The following nice properties of a winning set are proved in \cite{S}.

\begin{proposition} \label{intersection}
The intersection of countably many $\alpha$-winning sets is $\alpha$-winning.
\end{proposition}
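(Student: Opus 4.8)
The plan is to fix an $\alpha\in(0,1)$ for which every $S_n$ is $\alpha$-winning, fix an arbitrary $\beta\in(0,1)$, and exhibit an $(\alpha,\beta)$-winning strategy for Alice with target $S:=\bigcap_{n}S_n$. The idea, which goes back to \cite{S}, is that inside a single $(\alpha,\beta)$-game Alice can run countably many auxiliary Schmidt games simultaneously: she devotes a distinct infinite arithmetic progression of turns to each $S_n$ and, on that progression, plays a winning strategy for $S_n$.

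First I would partition the set of turns $\mathbb{N}$ into infinitely many infinite arithmetic progressions, for instance $T_n=\{\,2^{n-1}(2k-1):k\ge 1\,\}$, which has common difference $d_n=2^{n}$. The key observation is that the outer play, restricted to the turns lying in $T_n$, is itself a legitimate Schmidt game: by the nestedness of \eqref{e:sequence} the outer balls at consecutive $T_n$-turns are nested, and over a block of $d_n$ consecutive turns the radius of Bob's ball gets multiplied by $(\alpha\beta)^{d_n}$. Writing $(\alpha\beta)^{d_n}=\alpha\,\beta_n'$ with $\beta_n':=\alpha^{d_n-1}\beta^{d_n}\in(0,1)$, one sees that the restricted play obeys exactly the rules of an $(\alpha,\beta_n')$-game. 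Since $S_n$ is $\alpha$-winning, Alice has an $(\alpha,\beta_n';S_n)$-winning strategy $\varphi_n$.

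Next I would define Alice's strategy in the original game: on turn $k$, let $n$ be the unique index with $k\in T_n$, feed $\varphi_n$ the sub-history consisting of the two players' balls at the earlier $T_n$-turns together with Bob's ball $\omega_k$ on turn $k$, and play whatever ball $\varphi_n$ prescribes; on the turns not in $T_n$ (that is, governed by the other strategies) this choice is left to those strategies. Such a move is admissible in the outer game because at a $T_n$-turn the containment and radius constraints of the auxiliary game coincide verbatim with those of the outer game. Letting $x_\infty$ be the unique point of $\bigcap_k B(\omega_k)$, it is also the unique point of the nested subsequence $\bigcap_{k\in T_n}B(\omega_k)$, which is precisely the outcome of the $n$-th auxiliary game; hence $x_\infty\in S_n$ for every $n$, so $x_\infty\in S$. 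As $\beta$ was arbitrary, $S$ is $\alpha$-winning.

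The one delicate point is the second step: one must check carefully that the outer play restricted to a fixed arithmetic progression really is a valid $(\alpha,\beta_n')$-game on which $\varphi_n$ can act. This reduces to the radius bookkeeping over a block of $d_n$ turns (the factor $(\alpha\beta)^{d_n}$, rewritten as $\alpha\beta_n'$) together with the observation that Bob's opening move in each auxiliary game is unconstrained, so whatever ball is in play when the first turn of $T_n$ is reached is a legal opening move for $\varphi_n$. Everything else is routine.
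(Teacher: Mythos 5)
Your proposal is correct and is essentially the proof the paper itself recalls (in the proof of Corollary \ref{countable}, attributed to Schmidt): the same dyadic partition of the turns into progressions $k\equiv 2^{t-1}\ (\mathrm{mod}\ 2^t)$, and the same reparametrized ratios, since your $\beta_n'=\alpha^{2^n-1}\beta^{2^n}$ coincides with the paper's $\beta(\alpha\beta)^{2^n-1}$. The radius bookkeeping and the identification of the auxiliary games' limit points with $x_\infty$ are exactly as in the paper.
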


\begin{proposition}
If the game is played on $X=\mathbb{R}^n$ with the Euclidean metric, then any winning set is dense and has full Hausdorff dimension $n$.
\end{proposition}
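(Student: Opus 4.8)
The plan is to prove the two assertions separately; density is immediate from the rules of the game, and the dimension statement comes from exhibiting inside the winning set a homogeneous Cantor set whose dimension can be pushed arbitrarily close to $n$. Fix $\alpha\in(0,1)$ with $S$ being $\alpha$-winning, so that for every $\beta\in(0,1)$ Alice possesses an $(\alpha,\beta;S)$-winning strategy. For density: given a nonempty open $U\subset\mathbb{R}^n$, fix any $\beta$ and a corresponding winning strategy for Alice, and have Bob open the game with $\omega_1=(x_1,r_1)$ chosen so that $B(\omega_1)\subset U$, which is possible since $U$ is open. No matter how Bob continues, the limit point $x_\infty$ of the nested balls in \eqref{e:sequence} lies in $S$ because Alice follows a winning strategy, and also $x_\infty\in B(\omega_1)\subset U$ by the nesting; hence $S\cap U\ne\emptyset$, so $S$ is dense.

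For the dimension, note that $\dim_H S\le n$ trivially, so it suffices to find, for each $\eta>0$, a subset of $S$ of dimension $\ge n-\eta$. Fix $\eta$, choose $\beta\in(0,\tfrac12)$ to be specified, and fix an $(\alpha,\beta;S)$-winning strategy for Alice. I would let Bob play a branching family of legal moves and let Alice answer along each branch by her strategy: starting from an arbitrary $\omega_1=(x_1,r_1)$ and Alice's reply $\omega_1'=(x_1',r_1')$, proceed recursively so that along a finite branch ending in Alice's ball $B(x_k',r_k')$, Bob branches over all moves $(x,\beta r_k')$ with $x$ ranging over a maximal $3\beta r_k'$-separated subset $F_k$ of $\overline{B}(x_k',(1-\beta)r_k')$ — exactly the set of admissible centers for a move of the forced radius $\beta r_k'$. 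A maximal separated set is a covering set, so comparing volumes gives $N:=\#F_k\ge\bigl((1-\beta)/(3\beta)\bigr)^n\ge(6\beta)^{-n}$, a bound independent of $k$ and of the branch, and the balls $B(x,\beta r_k')$, $x\in F_k$, are pairwise separated by at least $\beta r_k'=r_{k+1}$. The set $C$ of limit points of all infinite branches lies in $S$, since Alice won along each branch, and writing $\rho_m:=r_{m+1}=(\alpha\beta)^m r_1$ it is a homogeneous Cantor set: a decreasing intersection of sets $C_m$, where $C_m$ is a union of $N^m$ balls of radius $\rho_m$, each ball of $C_{m-1}$ containing $N$ balls of $C_m$ mutually separated by at least $\rho_m$.

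It remains to bound $\dim_H C$ from below, and this is the step that needs care. Putting on $C$ the uniform measure $\mu$ that gives mass $N^{-m}$ to each level-$m$ ball, the separation built into the $F_k$ lets one control $\mu\bigl(B(x,\rho)\bigr)$ for an arbitrary ball with $\rho_m\le\rho<\rho_{m-1}$ — such a ball meets $O_n\bigl((\rho/\rho_m)^n\bigr)$ level-$m$ balls and $O_n(1)$ level-$(m-1)$ balls — and a routine computation then yields $\mu\bigl(B(x,\rho)\bigr)\le C\rho^{s}$ for every $s<\log N/\log\bigl(1/(\alpha\beta)\bigr)$; by the mass distribution principle (equivalently, the homogeneous Cantor set estimate of Falconer's type),
\begin{equation*}
\dim_H C \geq \frac{\log N}{\log(1/(\alpha\beta))} \geq \frac{n\log(1/(6\beta))}{\log(1/\alpha)+\log(1/\beta)},
\end{equation*}
and with $\alpha$ fixed the right-hand side tends to $n$ as $\beta\to0^{+}$. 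Choosing $\beta$ small enough gives $\dim_H C\ge n-\eta$, and since $C\subset S$ we conclude $\dim_H S=n$. The main obstacle is exactly this last dimension lower bound: one must estimate $\mu$ on balls of \emph{all} scales, which is why Bob's branching family $F_k$ is taken to be a separated set and not merely a maximal packing of admissible moves. A lesser but genuine point to handle is that Alice's strategy may depend on the entire past of the play, so the branches must be constructed and followed independently — this is harmless because each branch is extended only forward and hence is a bona fide play against her fixed strategy.
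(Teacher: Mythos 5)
Your proof is correct, and it is essentially the argument this paper uses for its own manifold analogue of the statement (Proposition \ref{HD}, restated and proved as Lemma \ref{schmidtgame}): branch Bob's admissible moves over a maximal family of $\gtrsim\beta^{-n}$ separated balls at each stage, let Alice answer each branch with her winning strategy, and lower-bound the dimension of the resulting Cantor set of limit points by a mass-distribution (Frostman) estimate whose exponent tends to $n$ as $\beta\to 0$. The paper itself only cites Schmidt for this proposition, but your construction matches its tree-like family $\mathcal{A}_l$ together with Lemma \ref{treelike}, so there is nothing to add.
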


For a more general metric space $(X, d)$ other than $\mathbb{R}^n$, we suppose $X$ supports a Federer measure:

\begin{definition}
Let $\mu$ be a locally finite Borel measure on a metric space $(X, d)$ and $D > 0$. We call $\mu$ is $D$-Federer if there exists $\rho_0 >0$ such that
$$\mu (B(x, 2\rho)) < D \mu (B(x, \rho)), \ \ \  \forall x \in \text{supp\ } \mu, \ \ \forall 0 < \rho < \rho_0.$$
\end{definition}

Recall that the \emph {lower pointwise dimension} of a measure $\mu$ at $x\in \text{supp\ }\mu$ is defined as:
$$\underline{d}_\mu (x):= \liminf_{\rho \to 0}\frac{\log \mu(B(x, \rho))}{\log \rho},$$
and for an open set $U \subset X$
$$\underline{d}_\mu(U):= \inf_{x\in \text{supp\ }\mu \cap U} \underline{d}_{\mu}(x).$$

The following proposition (Proposition 5.1 in \cite{KW1}) shows that any winning set on supports of a Federer meausre has a positive Hausdorff dimension.

\begin{proposition} \label{HD}
Let $X$ be a complete metric space which is the support of a Federer measure $\mu$. If $S$ is a winning set on $X$, then for any nonempty open set $U \subset X$, one has
$$dim_H(S \cap U) \geq \underline{d}_\mu (U).$$

\end{proposition}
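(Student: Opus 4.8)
The plan is to show that any winning set $S$ on $X$ carries a sufficiently rich family of measures (or that its complement is too thin to support one) so that the standard mass distribution principle forces $\dim_H(S\cap U)\geq \underline{d}_\mu(U)$. Fix a nonempty open set $U\subset X$, and fix $s<\underline{d}_\mu(U)$; it suffices to prove $\dim_H(S\cap U)\geq s$. Choose $\alpha$ so that $S$ is $\alpha$-winning, and choose $\beta\in(0,1)$ small (to be pinned down below in terms of $\alpha$, $s$, and the Federer constant $D$). The idea is to exploit the Federer property to run a ``thick'' version of the game inside $U$: at each of Bob's turns we do not let Bob move freely, but instead we consider, within Bob's ball $B(\omega_k)$, a large finite collection of disjoint sub-balls of the correct radius $r_{k+1}'=\alpha\beta r_k'$, and we track how many of them survive Alice's winning strategy. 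Because Alice's move only removes one ball of radius $\alpha r_k$, and the Federer condition bounds from below the number of disjoint $(\alpha\beta r_k')$-balls one can pack into an $r_k$-ball with centers in $\mathrm{supp}\,\mu$, for $\beta$ small enough a definite proportion of the candidate sub-balls remain legal moves for Bob. Iterating, this yields a Cantor-like subset $C\subset S\cap U$ together with a natural probability measure $\lambda$ on $C$.

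The key steps, in order, are: (i) translate the winning condition for $S$ into the existence, at every stage of the game, of at least one admissible Bob-ball avoiding Alice's current ball; (ii) upgrade (i) to a quantitative statement using the $D$-Federer property of $\mu$ — inside any ball $B(x,r)$ with $x\in\mathrm{supp}\,\mu$ and $r<\rho_0$ one can find $N=N(D,\alpha,\beta)$ pairwise disjoint balls of radius $\alpha\beta r$ centered on $\mathrm{supp}\,\mu$, with $N\to\infty$ as $\beta\to 0$, and at least $N-1$ of them are disjoint from Alice's response and hence legal next moves for Bob; (iii) build the tree: start with Bob's first ball inside $U$ of radius $<\rho_0$, and at each level branch into the $\geq N-1$ surviving sub-balls, defining $C$ as the intersection of the nested (closed) balls along each infinite branch and noting $C\subset S$ by the $\alpha$-winning property and $C\subset \overline{U}\subset$ (a neighborhood we may shrink into $U$); (iv) put the uniform branching measure $\lambda$ on $C$, so a level-$k$ ball has radius $r_k\asymp (\alpha\beta)^k r_0$ and $\lambda$-mass $\leq (N-1)^{-k}$; (v) verify the Frostman/mass-distribution estimate: for a ball $B(z,\rho)$ with $z\in C$, choose $k$ with $r_{k+1}\leq\rho<r_k$, observe $B(z,\rho)$ meets a bounded number of level-$k$ balls (bounded in terms of $D$ via a covering/Federer argument), so $\lambda(B(z,\rho))\lesssim (N-1)^{-k}\lesssim \rho^{\,\log(N-1)/|\log(\alpha\beta)|}$; (vi) choose $\beta$ small enough that $\log(N-1)/|\log(\alpha\beta)|\geq s$, and conclude by the mass distribution principle that $\dim_H(C)\geq s$, hence $\dim_H(S\cap U)\geq s$; let $s\uparrow\underline{d}_\mu(U)$.

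I expect the main obstacle to be step (v), the two-sided bookkeeping that ties the combinatorial branching number $N-1$ to the metric radii $r_k$ uniformly over the space. Specifically, one must control the number of level-$k$ Cantor balls that a generic ball of intermediate radius $\rho\in[r_{k+1},r_k)$ can intersect; this requires that the level-$k$ balls be not only disjoint but ``well separated'' at scale comparable to $r_k$, which in turn requires choosing the packing in step (ii) with centers that are $c\,r_k$-separated for a definite $c=c(\alpha,\beta)$ — and then checking via the Federer doubling that such a separated packing still has cardinality $N(\beta)\to\infty$. Getting all these constants ($N$, the separation $c$, the overlap bound, and finally $\beta$) to line up so that the resulting Hölder exponent exceeds the prescribed $s<\underline{d}_\mu(U)$ is the crux; everything else is the routine construction of a self-similar Cantor set and an application of the mass distribution principle. (Alternatively, one could cite the abstract ``winning implies large dimension on Federer spaces'' machinery, but since Proposition \ref{HD} is itself that statement, the argument above is the honest route.)
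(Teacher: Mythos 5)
Your overall architecture — turn the game into a branching tree of balls, extract a Cantor set $C\subset S\cap U$, put a Frostman measure on it, and apply the mass distribution principle — is exactly the skeleton of the argument in the paper (Lemma \ref{schmidtgame} built on Lemma \ref{treelike}). But the construction of the tree in your steps (i)--(iii) is based on an inversion of the rules of Schmidt's game, and this breaks the proof at its most important point. In the game as defined here (see \eqref{e:sequence}), Alice does not \emph{remove} a ball: she \emph{selects} $B(\omega_k')\subset B(\omega_k)$ of radius $\alpha r_k$, and Bob's next legal move is a ball of radius $\beta r_k'$ \emph{contained in} $B(\omega_k')$ — not one disjoint from it. So the sub-balls you declare to be ``legal next moves for Bob'' because they avoid Alice's response are precisely the illegal ones, the infinite branches of your tree are not plays of the game, and the winning property gives no information about their limit points. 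Indeed Alice wins by steering the nested intersection point \emph{into} $S$; points obtained by systematically dodging her choices have no reason to lie in $S$, so the claimed inclusion $C\subset S$ in step (iii) fails. The fix is to branch the other way: inside each Alice-ball $B(\omega_k')$ place $N(\beta)$ disjoint balls of radius $\beta r_k'$ (these are all legal Bob moves, and the Federer property is used exactly here to make $N(\beta)$ large), apply Alice's winning strategy to each one to get her responses of radius $\alpha\beta r_k'$, and take \emph{those responses} as the next level of the tree. Then every branch is a legal play in which Alice follows her winning strategy, so every limit point is in $S$.

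A secondary issue: even after repairing the tree, your uniform branching measure with mass $(N-1)^{-k}$ per level-$k$ ball yields the exponent $\log(N-1)/\log\frac{1}{\alpha\beta}$, and $N=N(D,\alpha,\beta)$ is controlled only by the doubling constant $D$; there is no reason this exponent approaches $\underline{d}_\mu(U)$ as $\beta\to 0$ for a general Federer measure. The paper avoids this by weighting the tree with $\mu$ itself: the measures $\mu^{(l)}$ are obtained by rescaling $\mu$ level by level as in \eqref{rescale}, which gives $\bar{\mu}(B(y,r))\leq \mu(B(y,2r))/\prod_{i\le l}\Delta_i$ with $\Delta_i$ bounded below via the Federer/power-law property; the first factor contributes $\underline{d}_\mu$ at the point $y$ and the second an error that vanishes as $\beta\to 0$. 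Your separation and overlap-counting worries in step (v) are legitimate but are also absorbed by this measure-theoretic bookkeeping (one simply passes from $B(y,r)$ to $B(y,2r)$, which contains every level-$l$ set meeting $B(y,r)$ once $d_{l+1}(\mathcal{A})\leq r$).
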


We shall play Schmidt games on unstable manifolds. In Section $5$, we restate Proposition \ref{HD} as Lemma \ref{schmidtgame} with X being a manifold, and $\mu$ being the volume measure on X. We shall prove Lemma \ref{schmidtgame} there, since it is essential to the proof of Theorem \ref{main2}. The proof of Proposition \ref{HD} is essentially the same with small modifications.

\section{Schmidt games on unstable manifolds}
\subsection{Unstable manifold as a playground}

We shall play Schmidt games on $W^u(x)$, where $x$ is an arbitrary point on $M$. Consider the Remannian metric on $W^u(x)$ induced from the one of $M$, and let $d^u$ denote the distance on $W^u(x)$. In this subsection, we study a nice measure supported on $W^u(x)$, which can enable us to estimate the Hausdorff dimension of a winning set on $W^u(x)$. That is, the volume measure $\nu$ on $W^u(x)$.

\begin{definition}\label{power}
We say a measure $\mu$ satisfies a power law, if there exist positive numbers $\delta, c_1, c_2, \rho_0$ such that:
$$c_1\rho^{\delta} \leq \mu (B(z, \rho)) \leq c_2\rho^\delta \ \ \ \ \ \forall z\in \text{supp\ }\mu, \ \ \ \forall 0<\rho<\rho_0.$$
\end{definition}

\begin{lemma}\label{powerlaw}
The volume measure $\nu$ on $W^u(x)$ satisfies a power law, with $\delta=u$.
\end{lemma}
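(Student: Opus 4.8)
The plan is to reduce the statement to the corresponding fact about the volume measure on a bounded domain in Euclidean space $\mathbb{R}^u$ ($u = \dim W^u(x)$), transported to the manifold via local charts with uniformly controlled distortion. First I would fix the global unstable manifold $W^u(x)$ with its induced Riemannian metric $d^u$, and invoke compactness of $M$ together with the fact that unstable leaves are uniformly $C^1$-immersed (indeed the unstable distribution $E^u$ is H\"older and the leaves are $C^{1+\alpha}$ along the leaf) to obtain a uniform lower bound on the injectivity radius and uniform two-sided bounds on the metric coefficients in exponential (normal) coordinates. Concretely, there should exist $\rho_0 > 0$ and a constant $L \geq 1$, independent of the basepoint $z \in W^u(x)$, such that the exponential map $\exp_z^u : B^{\mathbb{R}^u}(0, \rho_0) \to B^u(z, \rho_0)$ is a diffeomorphism whose Jacobian $J\exp_z^u$ satisfies $L^{-1} \leq J\exp_z^u \leq L$ on that ball, and such that $\exp_z^u$ distorts distances by at most the factor $L$.

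Granting this, the estimate is essentially immediate. For $z \in W^u(x)$ and $0 < \rho < \rho_0$, the ball $B^u(z,\rho)$ is the image under $\exp_z^u$ of a set trapped between the Euclidean balls $B^{\mathbb{R}^u}(0, \rho/L)$ and $B^{\mathbb{R}^u}(0, L\rho)$ (shrinking $\rho_0$ if necessary so that $L\rho_0 < \rho_0$ in the chart domain); applying the Jacobian bounds and the formula $\nu(B^{\mathbb{R}^u}(0,r)) = \omega_u r^u$ for Lebesgue measure, where $\omega_u$ is the volume of the unit ball in $\mathbb{R}^u$, one gets
\[
L^{-1}\omega_u L^{-u}\rho^u \;\leq\; \nu(B^u(z,\rho)) \;\leq\; L\,\omega_u L^{u}\rho^u,
\]
so the power law holds with $\delta = u$, $c_1 = \omega_u L^{-u-1}$, $c_2 = \omega_u L^{u+1}$, and the same $\rho_0$. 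The key point is that $L$ and $\rho_0$ can be chosen uniformly in $z$, which is what makes this a genuine power law (uniform constants) rather than a pointwise statement.

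The main obstacle is precisely the uniformity of the chart estimates over the (possibly noncompact, infinite) global unstable manifold $W^u(x)$. Although $M$ is compact, $W^u(x)$ need not be, so one cannot simply quote compactness of the leaf. The resolution is that the relevant geometric quantities — injectivity radius along leaves, curvature bounds of the induced metric, and the modulus of continuity of $E^u$ — are controlled by data on the ambient compact manifold $M$ and are therefore uniform across all of $W^u(x)$ (and in fact across all unstable leaves simultaneously). I would make this precise by covering $M$ with finitely many foliation charts (local product neighborhoods for $W^u$) in which the plaques are graphs of uniformly $C^1$ functions with a common bound on the $C^1$ norm, and deriving the constants $L, \rho_0$ from this finite cover; the $C^{1+\alpha}$ regularity of $f$ and the standard theory of the unstable foliation (its leaves being $C^{1+\alpha}$ immersed submanifolds varying continuously in the $C^1$ topology, cf.\ \cite{BP}, \cite{PS}) supply exactly what is needed. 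Once the uniform chart bounds are in hand, the remainder of the argument is the routine Jacobian comparison above.
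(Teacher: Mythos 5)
Your proposal is correct and follows essentially the same route as the paper: the paper also works in leafwise exponential coordinates with $\|T_y\exp_p - \mathrm{Id}\|\le\epsilon_0$ uniformly controlled, and compares $\nu(B^u(z,\rho))$ to Euclidean ball volumes, noting that $\rho_0$ can be chosen independently of the basepoint. You simply spell out the Jacobian comparison and the uniformity-over-the-noncompact-leaf issue in more detail than the paper's terse ``it is easy to see'' treatment.
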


\begin{proof}
Since $\text{supp\ }\nu= W^u(x)$, let us fix an arbitrary $p \in W^u(x)$. Consider the exponential map
$$\exp_p: \tilde{B}(0, 2\rho_0) \to B^u(p, 2\rho_0),$$
where $\tilde{B}(0, 2\rho_0) \subset T_pW^u(x)$, $B^u(p, 2\rho_0) \subset W^u(x)$, and $\rho_0$ is chosen such that $\exp_p$ is a diffeomorphism with $\|T_y\exp_p-Id\| \leq \epsilon_0$ for $\forall y\in \tilde{B}(0, 2\rho_0)$ and some $\epsilon_0 >0$ small enough. Then it is easy to see that for any $z\in B^u(p, \rho_0)$, $0<\rho<\rho_0$,
$$ c_1\rho^u \leq \nu(B^u(z, \rho))\leq c_2\rho^u.$$
for some $c_1, c_2 >0$. We can choose $\rho_0$ to be independent of $p\in W^u(x)$.
\end{proof}

\begin{lemma}\label{Federer}
The volume measure $\nu$ is a Federer measure on $W^u(x)$.
\end{lemma}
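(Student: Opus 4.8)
The plan is to deduce this immediately from the power law established in Lemma \ref{powerlaw}; the Federer property for a measure satisfying a power law is a purely formal consequence, so no new geometric input is needed.

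First I would recall that, by Lemma \ref{powerlaw}, there exist constants $c_1, c_2, \rho_0 > 0$ (with $\rho_0$ independent of the base point on $W^u(x)$) such that
$$
c_1\rho^{u} \;\le\; \nu\bigl(B^u(z,\rho)\bigr) \;\le\; c_2\rho^{u}
\qquad \text{for all } z \in W^u(x) = \text{supp\ }\nu, \ \ 0 < \rho < \rho_0 .
$$
Then, for any $z \in \text{supp\ }\nu$ and any $0 < \rho < \rho_0/2$, the radius $2\rho$ still lies below $\rho_0$, so applying the upper bound at radius $2\rho$ and the lower bound at radius $\rho$ gives
$$
\nu\bigl(B^u(z,2\rho)\bigr) \;\le\; c_2(2\rho)^{u} \;=\; 2^{u}c_2\,\rho^{u} \;\le\; \frac{2^{u}c_2}{c_1}\,\nu\bigl(B^u(z,\rho)\bigr).
$$
Hence $\nu$ is $D$-Federer on $W^u(x)$ with Federer radius $\rho_0/2$.

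The only point requiring a tiny bit of care is that the definition of $D$-Federer asks for a strict inequality $\nu(B(x,2\rho)) < D\,\nu(B(x,\rho))$, whereas the computation above produces a non-strict one with $D = 2^{u}c_2/c_1$. I would handle this simply by enlarging the constant, taking $D := 2^{u+1}c_2/c_1$, which makes the inequality strict. There is no substantive obstacle here: the entire content of the lemma is the power law of Lemma \ref{powerlaw}, and once that is in hand the Federer property follows by this elementary comparison of the ball measures at radii $\rho$ and $2\rho$.
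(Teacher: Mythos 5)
Your proof is correct and is essentially the same as the paper's: both deduce the Federer property directly from the power law of Lemma \ref{powerlaw} by comparing the upper bound at radius $2\rho$ with the lower bound at radius $\rho$, yielding $D=2^{u}c_2/c_1$ (the paper even uses the same restriction $2\rho<\rho_0$). Your remark about enlarging $D$ to make the inequality strict is a harmless refinement the paper glosses over.
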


\begin{proof}
By Lemma \ref{powerlaw}, there exists $\rho_0 >0$, such that for $\forall 0 < 2\rho < \rho_0$,
$$\nu(B(z, 2\rho)) \leq c_2(2\rho)^u = \frac{c_22^u}{c_1}c_1 \rho^u \leq D\nu(B(z, \rho)),$$
where $D=\frac{c_22^u}{c_1} >0$.
\end{proof}

In \cite{BFK}, the authors define a so called absolutely decaying measure on $\mathbb{R}^n$ so that we can estimate the measure of a neighborhood of a hyperplane in $\mathbb{R}^n$. Here we can loose the condition, and the following lemma is enough for our purpose.

\begin{lemma}
Let $\nu$ be the volume measure on $W^u(x)$. Then there exist $\rho_0 >0$ and some $C>0$ such that
$$\nu(B(x_1, \rho)\cap B(x_2, \epsilon \rho)) < C\epsilon^u \nu(B(x_1, \rho))$$
for any $x_1, x_2 \in W^u(x)$, $ \forall 0 < \rho < \rho_0, \ 0 < \epsilon <1$.
\end{lemma}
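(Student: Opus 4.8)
The plan is to deduce this directly from the power law for $\nu$ established in Lemma~\ref{powerlaw}, together with monotonicity of the measure; in particular, unlike the absolutely decaying condition of \cite{BFK}, we need no information about the position of $x_2$ relative to $x_1$ or about hyperplanes. First I would take $\rho_0>0$ to be the threshold furnished by Lemma~\ref{powerlaw}, so that there are constants $c_1,c_2>0$ with $c_1\rho^u\le\nu(B(z,\rho))\le c_2\rho^u$ for every $z\in W^u(x)$ and every $0<\rho<\rho_0$. Note that if $0<\epsilon<1$ and $0<\rho<\rho_0$, then the radius $\epsilon\rho$ also lies in $(0,\rho_0)$, so the power law is available for balls of both radii $\rho$ and $\epsilon\rho$.

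Now, by monotonicity of $\nu$ and the upper bound in the power law applied to the ball $B(x_2,\epsilon\rho)$,
$$\nu\bigl(B(x_1,\rho)\cap B(x_2,\epsilon\rho)\bigr)\le\nu\bigl(B(x_2,\epsilon\rho)\bigr)\le c_2(\epsilon\rho)^u=c_2\,\epsilon^u\,\rho^u,$$
while the lower bound applied to $B(x_1,\rho)$ gives $\rho^u\le c_1^{-1}\nu\bigl(B(x_1,\rho)\bigr)$. Substituting the latter into the former yields
$$\nu\bigl(B(x_1,\rho)\cap B(x_2,\epsilon\rho)\bigr)\le\frac{c_2}{c_1}\,\epsilon^u\,\nu\bigl(B(x_1,\rho)\bigr).$$
Choosing $C:=2c_2/c_1$ (indeed any constant strictly larger than $c_2/c_1$) converts this into the strict inequality claimed, valid for all $x_1,x_2\in W^u(x)$, all $0<\rho<\rho_0$ and all $0<\epsilon<1$; the case in which $B(x_1,\rho)\cap B(x_2,\epsilon\rho)=\emptyset$ is trivial since the right-hand side is positive.

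There is essentially no obstacle in this argument. The only points requiring a moment's care are that the power law is asserted only for radii below $\rho_0$ — which is why one checks $\epsilon\rho<\rho_0$ — and the passage from $\le$ to the strict $<$, which is handled simply by enlarging the constant $C$. Everything is uniform in the base point because the $\rho_0$, $c_1$, $c_2$ of Lemma~\ref{powerlaw} were chosen independently of the point of $W^u(x)$.
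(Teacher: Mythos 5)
Your proof is correct and is essentially identical to the paper's: both bound the intersection by $\nu(B(x_2,\epsilon\rho))$, apply the power-law upper bound there and the lower bound on $\nu(B(x_1,\rho))$, arriving at $C$ on the order of $c_2/c_1$. Your extra care about the strictness of the inequality (doubling the constant) is a minor refinement the paper glosses over.
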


\begin{proof}
Let $\rho_0$ be as in Lemma \ref{powerlaw}. Then for $\forall 0 < \rho < \rho_0$,
$$\nu(B(x_1, \rho)\cap B(x_2, \epsilon \rho)) < \nu(B(x_2, \epsilon \rho)) \leq c_2 (\epsilon \rho)^u \leq C \epsilon^u c_1 \rho^u \leq C\epsilon^u \nu(B(x_1, \rho)),$$
where $C=\frac{c_2}{c_1}>0$.
\end{proof}

\begin{remark}
In next subsection, we consider a partially hyperbolic system with one dimensional unstable manifolds, i.e., $u=1$. Since the volume measure is the length, we have a more specific choice for the constants in the previous lemmas: $c_1=c_2=1, D=2, C=1$.
\end{remark}

The next lemma is similar to Lemma $3.4$ in \cite{BFK} which is crucial to our proof of Main Theorem \ref{main}. It guarantees that while playing Schmidt games on $W^u(x)$, at each turn Alice can stay away from at least some of the specified balls.

\begin{lemma} \label{Ne}
Let $C, D, \rho_0$ be as in the previous lemmas, and
\begin{equation} \label{e:winning}
\begin{aligned}
0< \alpha < \frac{1}{2}(\frac{1}{CD})^{\frac{1}{u}}.
\end{aligned}
\end{equation}
There exists $\epsilon = \epsilon(C, D) \in (0, 1)$, such that if $x_1 \in W^u(x)$, $0 <\rho < \rho_0$, $y_1, y_2, \cdots, y_N$ are $N$ points in $W^u(x)$, there exists $x_2 \in W^u(x)$ such that
$$B(x_2, \alpha \rho) \subset B(x_1, \rho),$$
and
$$B(x_2, \alpha \rho)\cap B(y_i, \alpha \rho)= \emptyset$$
for at lest $\lceil \epsilon N \rceil$ (the smallest integer $\geq \epsilon N$) of the points $y_i, \ 1\leq i \leq N$.
\end{lemma}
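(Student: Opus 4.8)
### Proof proposal for Lemma \ref{Ne}

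The plan is to reduce the problem to a packing/covering estimate inside the big ball $B(x_1,\rho)$. First I would locate a ball of radius $2\alpha\rho$ (slightly inflated) that is well inside $B(x_1,\rho)$: since $\alpha<\tfrac12$, the ball $B(x_1,(1-2\alpha)\rho)$ is nonempty and any $x_2$ in it automatically satisfies $B(x_2,\alpha\rho)\subset B(x_1,\rho)$ (using the triangle inequality for $d^u$ together with the fact that radii compose additively; strictly speaking one wants $\alpha$ a touch smaller than $\tfrac12$, and the hypothesis \eqref{e:winning} gives that comfortably). So the real content is to find such an $x_2$ whose $\alpha\rho$-ball misses at least $\lceil\epsilon N\rceil$ of the balls $B(y_i,\alpha\rho)$.

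Next I would argue by a volume (measure) counting argument using the power law from Lemma \ref{powerlaw}. A center $x_2\in B(x_1,(1-2\alpha)\rho)$ has $B(x_2,\alpha\rho)$ \emph{meeting} $B(y_i,\alpha\rho)$ only if $d^u(x_2,y_i)<2\alpha\rho$, i.e.\ only if $x_2\in B(y_i,2\alpha\rho)$. I want to choose $x_2$ to lie outside as many of the sets $B(y_i,2\alpha\rho)$ as possible. Consider the sub-collection $I$ of indices $i$ for which $B(y_i,2\alpha\rho)$ actually intersects $B(x_1,(1-2\alpha)\rho)$ — only these can obstruct us, so we may as well assume all $N$ of them do. For each such $i$, $\nu\big(B(y_i,2\alpha\rho)\cap B(x_1,\rho)\big)\le \nu(B(y_i,2\alpha\rho))\le c_2(2\alpha\rho)^u = c_2 2^u\alpha^u\rho^u$; meanwhile $\nu(B(x_1,\rho))\ge c_1\rho^u$ (this needs $\rho<\rho_0$, which is assumed). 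So the expected number — averaged against $\nu$ restricted to $B(x_1,\rho)$ and normalized — of obstructing balls covering a random point is at most
\begin{equation*}
\frac{1}{\nu(B(x_1,\rho))}\sum_{i=1}^{N}\nu\big(B(y_i,2\alpha\rho)\cap B(x_1,\rho)\big)\ \le\ \frac{c_2 2^u\alpha^u}{c_1}\,N\ =\ (2\alpha)^u\frac{c_2}{c_1}N.
\end{equation*}
Here I would prefer to invoke the sharper Lemma just before (the $\nu(B(x_1,\rho)\cap B(x_2,\epsilon\rho))<C\epsilon^u\nu(B(x_1,\rho))$ estimate) with $\epsilon=2\alpha$, giving the bound $C(2\alpha)^u N$ directly in terms of $C$ — consistent with the statement's claim that $\epsilon$ depends only on $C,D$. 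By the pigeonhole/averaging principle there is a point $x_2\in B(x_1,(1-2\alpha)\rho)$ lying in at most $C(2\alpha)^u N$ of the sets $B(y_i,2\alpha\rho)$ (in fact in a set of positive $\nu$-measure of such points, since the complement of the union has positive measure once $C(2\alpha)^u<1$). Then $B(x_2,\alpha\rho)$ misses $B(y_i,\alpha\rho)$ for all but at most $C(2\alpha)^u N$ values of $i$, i.e.\ for at least $N-C(2\alpha)^u N = (1-C(2\alpha)^u)N$ of them.

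Finally I would set $\epsilon := 1-C(2\alpha)^u$ and check it is genuinely in $(0,1)$ and can be taken to depend only on $C$ and $D$: the hypothesis \eqref{e:winning}, $\alpha<\tfrac12(CD)^{-1/u}$, gives $(2\alpha)^u<\tfrac{1}{CD}$, hence $C(2\alpha)^u<\tfrac1D<1$, so $\epsilon>1-\tfrac1D>0$; thus one may simply take $\epsilon=1-\tfrac1D\in(0,1)$, which depends only on $D$ (hence on $C,D$), and the conclusion holds with $\lceil\epsilon N\rceil$ since it held with $(1-C(2\alpha)^u)N\ge\epsilon N$. One small point to handle carefully: when $N$ is such that $C(2\alpha)^u N$ is not an integer we are discarding at most $\lfloor C(2\alpha)^u N\rfloor$ balls, and $N-\lfloor C(2\alpha)^u N\rfloor \ge \lceil(1-C(2\alpha)^u)N\rceil\ge\lceil\epsilon N\rceil$, so the integer rounding is in our favor.

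The main obstacle I anticipate is purely bookkeeping rather than conceptual: making sure the radius arithmetic for "$B(x_2,\alpha\rho)\subset B(x_1,\rho)$'' is valid on a curved manifold $W^u(x)$ (where the naive identity $B(x_1,\rho)\supset B(x_2,r)$ when $d^u(x_1,x_2)+r\le\rho$ does still hold, since it is just the triangle inequality for the intrinsic distance $d^u$), and confirming that the averaging step can be run against $\nu$ on a subset of $B(x_1,\rho)$ rather than on all of $W^u(x)$ — for this one only needs $\nu(B(x_1,(1-2\alpha)\rho))>0$, which is immediate from the power law, and the measure estimates above restricted to that ball. Everything else follows from Lemma \ref{powerlaw} and the preceding two lemmas exactly as in Lemma $3.4$ of \cite{BFK}.
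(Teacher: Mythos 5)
Your overall strategy is the same averaging argument the paper uses (following Lemma $3.4$ of \cite{BFK}), but there is a genuine gap at the averaging step. You compute the average number of obstructing sets $B(y_i,2\alpha\rho)$ covering a point with respect to $\nu$ normalized on the \emph{large} ball $B(x_1,\rho)$, obtaining the bound $C(2\alpha)^u N$, and then conclude that some point of the \emph{smaller} concentric ball $B(x_1,(1-2\alpha)\rho)$ is covered by at most $C(2\alpha)^u N$ of them. That implication does not follow: all of the mass of $\bigcup_i \bigl(B(y_i,2\alpha\rho)\cap B(x_1,\rho)\bigr)$ could a priori be concentrated inside the smaller ball, in which case the average over the smaller ball is larger by the factor $\nu(B(x_1,\rho))/\nu(B(x_1,(1-2\alpha)\rho))$, which is controlled only by the Federer constant $D$. (Your parenthetical justification --- that the complement of the union has positive measure once $C(2\alpha)^u<1$ --- is also incorrect: the union of $N$ such sets can have measure up to $NC(2\alpha)^u\nu(B(x_1,\rho))$, which exceeds $\nu(B(x_1,\rho))$ for large $N$.) Symptomatically, your measure estimate never uses $D$, even though hypothesis \eqref{e:winning} involves $CD$; the constant $D$ is exactly what is needed to pass from the big ball to the small one, and your proposed $\epsilon$ in fact depends on $\alpha$ as well, not only on $C,D$.

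The repair is short and is what the paper does: set $A_i:=B(x_1,(1-\alpha)\rho)\setminus B(y_i,2\alpha\rho)$ and bound
\begin{equation*}
\frac{\nu(A_i)}{\nu(B(x_1,\rho))}\ \ge\ \frac{\nu(B(x_1,(1-\alpha)\rho))}{\nu(B(x_1,\rho))}-\frac{\nu\bigl(B(y_i,2\alpha\rho)\cap B(x_1,\rho)\bigr)}{\nu(B(x_1,\rho))}\ \ge\ \frac{1}{D}-C(2\alpha)^u\ =:\ \epsilon,
\end{equation*}
where the first term uses the Federer property (valid since $1-\alpha\ge\tfrac12$) and the second uses the intersection lemma you cite. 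Condition \eqref{e:winning} gives $C(2\alpha)^u<1/D$, so $\epsilon>0$ and depends only on $C,D$. Integrating $\sum_i\chi_{A_i}$ over $B(x_1,\rho)$ then produces a point $x_2$ lying in at least $\lceil\epsilon N\rceil$ of the $A_i$, which simultaneously forces $x_2\in B(x_1,(1-\alpha)\rho)$ (hence $B(x_2,\alpha\rho)\subset B(x_1,\rho)$) and $d^u(x_2,y_i)\ge 2\alpha\rho$ for those indices. In your ``count the bad indices'' formulation the corrected constant is $\epsilon=1-CD(2\alpha)^u$ rather than $1-C(2\alpha)^u$. The rest of your outline (the triangle-inequality inclusion and the rounding remark) is fine.
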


\begin{proof}
Let $A_i:= B(x_1, (1-\alpha) \rho) \setminus B(y_i, 2\alpha \rho)$, $1 \leq i \leq N$. Then
\begin{equation}\label{e:estimate}
\begin{aligned}
\frac{\nu(A_i)}{\nu(B(x_1, \rho))} &\geq \frac{\nu(B(x_1, (1-\alpha) \rho))}{\nu(B(x_1, \rho))}-\frac{\nu(B(y_i, 2\alpha \rho)}{\nu(B(x_1, \rho))}
\\& > \frac{1}{D}-C(2\alpha)^u
\\&:= \epsilon >0.
\end{aligned}
\end{equation}
Thus,
$$\int_{B(x_1, \rho)}\sum_{i=1}^N \chi_{A_i}(x)d\nu(x) \geq N\epsilon \nu (B(x_1, \rho)).$$
Hence by the mean value theorem, there exists $x_2$ with $\sum_{i=1}^N \chi_{A_i}(x_2)\geq N\epsilon$, i.e., there exist $j_1, \cdots, j_k$, where $k \geq \lceil \epsilon N\rceil$, such that $x_2 \in \cap_{i=1}^k A_{j_i}$ which implies $B(x_2, \alpha \rho) \subset B(x_1, \rho)$ and $B(x_2, \alpha \rho)\cap B(y_{j_i}, \alpha \rho)= \emptyset$.
\end{proof}

\begin{remark}
If we choose $C=\frac{c_2}{c_1}$, $D=\frac{c_22^u}{c_1}$, then $0< \alpha < \frac{1}{4}(\frac{c_1}{c_2})^{\frac{2}{u}}$. In the case when $\dim W^u(x_0)=1$, we choose $c_1=c_2=1$, so $0 < \alpha < \frac{1}{4}$.
\end{remark}
\begin{remark}
The estimation in the above proof is a slight modification of the proof of Lemma $3.4$ in \cite{BFK} which is for an absolutely friendly measure on $\mathbb{R}^n$. In fact we can do a better estimation than \eqref{e:estimate} by using Lemma \ref{powerlaw}. That is,
\begin{equation*}
\begin{aligned}
\frac{\nu(A_i)}{\nu(B(x_1, \rho))} &\geq \frac{\nu(B(x_1, (1-\alpha) \rho))}{\nu(B(x_1, \rho))}-\frac{\nu(B(y_i, 2\alpha \rho))}{\nu(B(x_1, \rho))}
\\& \geq \frac{c_1(1-\alpha)^u}{c_2}-\frac{c_2(2\alpha)^u}{c_1}
\\& =\frac{c_1}{c_2}[(1-\alpha)^u-(2\alpha)^u (\frac{c_2}{c_1})^2].
\end{aligned}
\end{equation*}
So we can pick any $\alpha$ with $(1-\alpha)^u > (2\alpha)^u (\frac{c_2}{c_1})^2$, i.e., $0<\alpha< \frac{1}{1+2(\frac{c_2}{c_1})^{\frac{2}{u}}} <\frac{1}{3}$. If in Lemma \ref{powerlaw} we choose $\rho_0$ very small, then $c_1$ and $c_2$ are very close to each other. Hence we can pick arbitrary $0< \alpha <\frac{1}{3}$.

\end{remark}

We will restate and prove Main Theorem \ref{main} according to the two different cases in Remark \ref{cases} separately in the next two subsections. The strategy to prove Main Theorem \ref{main} is similar as the one of Theorem $4.1$ in \cite{BFK}. The difficulty here is that $f|_{W^u(x)}$ is nonlinear and hence has different expanding rates in different directions and at different points. Let $W$ be a local manifold passing through $y$ transversally to the foliation $W^u$ ($\dim W=n-u$). We call an open $c$-rectangle ($c$ is very small) at $y$ the set
\begin{equation*}
\Pi(c) := \Pi(y, W, c):= \bigcup_{z\in B^W(y,c/2)} B^u(z, c/2)
\end{equation*}
Denote $I_k=I_k(c)$ a connected component of $f^{-k}(\Pi(c)) \cap W^u(x)$ on $W^u(x)$, $k \geq 0$. Note that there may exist more than one connected components for a same $k$.
\subsection{P.H.D. with 1-dim unstable manifolds}

In this subsection we consider a partially hyperbolic diffeomorphism $f$ with one dimensional unstable manifolds. Since $M$ is compact, and $E^u$ is a H\"{o}lder continuous distribution on $M$, we can suppose
\begin{equation*}
\sigma_1 \leq \|T_zf|_{E_z^u}\| \leq \sigma_2,   \ \ \text{for any \ }z \in M.
\end{equation*}
We denote $\|I_k\|$ the length of $I_k$ on the unstable manifold. For simplicity we also denote $f$ for $f|_{W^u(x)}$, and $f'(z)$ for $\|T_zf|_{E_z^u}\|$. A useful tool is the following bounded distortion property.

\begin{lemma}\label{BD}(Bounded Distortion)
For any $z_1, z_2 \in I_k(c)$, one has
$$\frac{1}{K} \leq \frac{(f^k)'(z_1)}{(f^k)'(z_2)} \leq K,$$
for some $K=K(c)$, and $K \to 1$ as $c \to 0$.
\end{lemma}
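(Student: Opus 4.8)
The plan is to exploit the nonuniform hyperbolicity of $f$ along the one-dimensional unstable manifold, where the derivative $(f^k)'$ is expanding but not constant, and control the variation via the Hölder continuity of $E^u$ together with the fact that the pieces $I_k$ are getting exponentially short. First I would recall that for $z_1, z_2 \in I_k$, the chain rule gives
\[
\log \frac{(f^k)'(z_1)}{(f^k)'(z_2)} = \sum_{j=0}^{k-1} \bigl(\log f'(f^j z_1) - \log f'(f^j z_2)\bigr),
\]
so the whole estimate reduces to bounding this telescoping sum.

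The key steps, in order, are as follows. (1) Since $M$ is compact and $f$ is $C^{1+\alpha}$ with $E^u$ Hölder, the function $z \mapsto \log f'(z) = \log \|T_z f|_{E^u_z}\|$ is Hölder continuous on $M$: there are constants $L > 0$ and $\gamma \in (0,1]$ with $|\log f'(z) - \log f'(w)| \le L\, d(z,w)^\gamma$ whenever $d(z,w)$ is small (one uses that $E^u$ is Hölder and $Tf$ is Hölder, and that $z,w$ can be joined by a short unstable arc so the relevant distance is the intrinsic $d^u$). (2) Next I would show the images $f^j(I_k)$, $0 \le j \le k$, have lengths that decay geometrically as $j$ moves backward from $k$: because $I_k \subset f^{-k}(\Pi(c))$, we have $f^k(I_k) \subset \Pi(c)$, so $\|f^k(I_k)\| \le c$, and applying $f^{-1}$ contracts unstable length by a factor $\le \sigma_1^{-1} < 1$ uniformly (here is where $\sigma_1 > 1$ from the global estimate $\sigma_1 \le f' \le \sigma_2$ matters — since $f$ is partially hyperbolic, $\|T_z f^{-1}|_{E^u_z}\| < 1$, so in fact $\sigma_1 > 1$). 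Hence $\|f^{k-i}(I_k)\| \le c\,\sigma_1^{-i}$ for $0 \le i \le k$. (3) Then for $z_1, z_2 \in I_k$, the points $f^j z_1, f^j z_2$ both lie in the arc $f^j(I_k)$, so $d^u(f^j z_1, f^j z_2) \le \|f^j(I_k)\| \le c\,\sigma_1^{-(k-j)}$. Plugging into the Hölder bound from step (1) and summing the geometric series,
\[
\Bigl|\log \frac{(f^k)'(z_1)}{(f^k)'(z_2)}\Bigr| \le \sum_{j=0}^{k-1} L\, d^u(f^j z_1, f^j z_2)^\gamma \le L\, c^\gamma \sum_{i=1}^{k} \sigma_1^{-\gamma i} \le \frac{L\, c^\gamma}{\sigma_1^{\gamma} - 1} =: \log K(c),
\]
which is finite, independent of $k$, $z_1$, $z_2$, and tends to $0$ as $c \to 0$. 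Exponentiating gives the claimed two-sided bound with $K = K(c) \to 1$.

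The step I expect to be the main obstacle is (1), establishing genuine Hölder continuity of $\log f'$ along unstable leaves with constants uniform over all of $M$ and over all leaves. The subtlety is that $f'(z) = \|T_z f|_{E^u_z}\|$ depends on the unstable \emph{subspace} $E^u_z$, which is only Hölder (not Lipschitz) in $z$, so one must combine the Hölder regularity of the distribution $z \mapsto E^u_z$ with the $C^{1+\alpha}$ regularity of $f$; one also needs the comparison between the intrinsic distance $d^u$ on a leaf and the ambient distance on $M$ to be controlled, which holds on small scales by compactness. A second minor point to be careful about is that the small pieces $I_k$ have diameter $\le c$ only after we know $\Pi(c)$ has unstable-diameter $\le c$; by definition of the $c$-rectangle this is immediate, but one should note the contraction estimate $\|f^{-1}\| \le \sigma_1^{-1}$ on unstable tangent vectors is uniform precisely because of the compactness of $M$ and continuity of $E^u$. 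Once these regularity facts are in place, the summation is routine and the whole lemma follows.
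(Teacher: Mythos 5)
Your proposal is correct and follows essentially the same route as the paper: chain rule to reduce to a telescoping sum of $\log f'$, Hölder continuity of $z\mapsto \log\|T_zf|_{E^u_z}\|$ coming from the Hölder regularity of $E^u$, the backward geometric decay $d^u(f^i z_1, f^i z_2)\le c\,\sigma_1^{-(k-i)}$, and summation of the resulting geometric series to get $\log K(c)\le \frac{l\,c^{\theta}}{\sigma_1^{\theta}-1}\to 0$. The points you flag as potential obstacles (uniform Hölder constants on the compact $M$, and $\sigma_1>1$ from $\|T_xf^{-1}|_{E^u_x}\|<1$) are exactly the facts the paper invokes, so nothing further is needed.
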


\begin{proof}
Since $E^u$ is H\"{o}lder continuous and so is $\log f'$, there exist $l>0$, $0 < \theta <1$ such that $\|\log f'(z_1)-\log f'(z_2)\| \leq l (d^u(z_1, z_2))^\theta$ for nearby $z_1$ and $z_2$. Recall that $\sigma_1 < f'(z) < \sigma_2$ for any $z \in W^u(x_0)$. For any $z_1, z_2 \in I_k$, since $d^u(f^k(z_1), f^k(z_2))\leq c$, one has
\begin{equation}\label{e:distance}
d^u(f^i(z_1), f^i(z_2)) \leq \frac{c}{(\sigma_1)^{k-i}}, \ \ \text{for\ \ } \forall 0 \leq i \leq k.
\end{equation}
Thus,
\begin{equation} \label{e:BD}
\begin{aligned}
\|\log \frac{(f^k)'(z_1)}{(f^k)'(z_2)}\| &\leq \sum_{i=0}^{k-1}\|\log f'(f^i(z_1)) - \log f'(f^i(z_2))\|
 \\ &\leq \sum_{i=0}^{k-1}l(d^u(f^i(z_1), f^i(z_2)))^\theta
\\&\leq \sum_{i=0}^{k-1}\frac{lc^\theta}{(\sigma_1)^{\theta (k-i)}} \leq \frac{lc^\theta}{(\sigma_1)^\theta-1}.
\end{aligned}
\end{equation}
Hence $$\frac{1}{K} \leq \frac{(f^k)'(z_1)}{(f^k)'(z_2)}\leq K,$$
where $K=\exp(\frac{lc^\theta}{(\sigma_1)^\theta-1})$.
\end{proof}

\begin{theorem}\label{1dim}
Suppose $\dim E^u=1$. Let $\alpha$ be as in \eqref{e:winning}. Then $E_x(f, y)$ is $\alpha$-winning on $W^u(x)$.
\end{theorem}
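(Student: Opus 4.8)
The plan is to describe an explicit $(\alpha,\beta;E_x(f,y))$-winning strategy for Alice, following the blueprint of Theorem 4.1 in \cite{BFK}, but tracking carefully the nonlinearity of $f|_{W^u(x)}$ via the bounded distortion Lemma \ref{BD}. Fix $\beta\in(0,1)$ and $\alpha$ as in \eqref{e:winning}, and let $\epsilon=\epsilon(C,D)\in(0,1)$ be as in Lemma \ref{Ne}. Bob opens with a ball $B(\omega_1)=B^u(x_1,r_1)$. We choose $c>0$ small enough (so that the distortion constant $K=K(c)$ in Lemma \ref{BD} is close to $1$) and define the open $c$-rectangle $\Pi(c)=\Pi(y,W,c)$ around $y$ as in the excerpt. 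The goal is that the limit point $x_\infty$ avoid every preimage component $I_k(c)\subset W^u(x)$; since $\Pi(c)$ is a neighborhood of $y$, this forces $f^k(x_\infty)\notin\Pi(c)$ for all $k\ge 0$, hence $y\notin\overline{\{f^k(x_\infty)\}}$, i.e.\ $x_\infty\in E_x(f,y)$.

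First I would estimate the size of the components $I_k(c)$ inside a given ball. By the bounded distortion Lemma \ref{BD} together with the uniform bounds $\sigma_1\le f'\le\sigma_2$, each component $I_k(c)$ has length comparable to $c\cdot(\text{a single contraction factor along the orbit})$, so it is contained in a $d^u$-ball of radius $\le K c\,\sigma_1^{-k}$; thus $\|I_k\|\to 0$ uniformly in $k$, and more importantly, geometrically in $k$. Next I would count: within Bob's ball $B(\omega_\ell)$ of radius $r_\ell$ at stage $\ell$, the number of components $I_k(c)$ that actually meet $B(\omega_\ell)$ and are "dangerous" (i.e.\ not yet so small that they fit inside an $\alpha r_\ell$-ball, in which case they are automatically handled, or so large that $B(\omega_\ell)$ is already disjoint from them) is bounded by some $N=N(\ell)$; a volume/packing argument using the power law (Lemma \ref{powerlaw}) on $W^u(x)$ gives an explicit bound on $N$ in terms of $r_\ell$ and the geometric decay rate $\sigma_1$. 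Then Alice applies Lemma \ref{Ne} to her current ball: she can pick $x'_\ell$ so that $B(x'_\ell,\alpha r_\ell)\subset B(\omega_\ell)$ and $B(x'_\ell,\alpha r_\ell)$ is disjoint from at least $\lceil\epsilon N\rceil$ of these $N$ dangerous components. Iterating, the number of surviving dangerous components is multiplied by $(1-\epsilon)$ at each of Alice's moves, while new components entering the shrinking ball arrive at a controlled rate; the key quantitative point is that the radius shrinks by the fixed factor $\alpha\beta<1$ per round, whereas the "dangerous window" for each fixed $k$ (the range of stages $\ell$ at which $I_k(c)$ is dangerous for $B(\omega_\ell)$) has bounded length, so every component is eventually eliminated after finitely many further Alice moves. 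Hence $x_\infty$ lies in no $I_k(c)$.

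The bookkeeping I would set up is: assign to each stage $\ell$ the finite set $\mathcal{I}_\ell$ of components $I_k(c)$ that intersect $B(\omega_\ell)$ but are not contained in $B(x,\,\alpha^2\beta r_\ell)$-scale neighbourhoods (precise threshold to be tuned), so $\mathcal{I}_\ell$ is exactly the set Alice must worry about; show $|\mathcal{I}_\ell|\le N_0$ for a uniform $N_0$ depending only on $\alpha,\beta,c,\sigma_1,\sigma_2$ and the power-law constants, because the radii $r_\ell$ decrease geometrically and so do the lengths $\|I_k\|$, so only a bounded number of "scales" $k$ are relevant at any stage; and prove that once $I_k(c)$ leaves $\mathcal{I}_\ell$ it never returns. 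Given the uniform bound $N_0$, choosing $\alpha$ small enough that $(1-\epsilon)$-decay over a bounded number of stages beats the uniform influx guarantees $\bigcap_\ell B(\omega_\ell)$ misses all components. To conclude, I would note that by construction $x_\infty$ avoids $\bigcup_{k\ge 0}I_k(c)$, hence $f^k(x_\infty)\notin\Pi(c)\ni y$ for all $k\ge 0$, so $x_\infty\in E_x(f,y)$; since $\beta$ was arbitrary this shows $E_x(f,y)$ is $\alpha$-winning on $W^u(x)$.

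The main obstacle I expect is the uniform counting bound on $|\mathcal{I}_\ell|$: unlike in the linear (toral) setting of \cite{BFK} where the preimages of a cube are genuine sub-cubes at geometrically decreasing scales with a clean self-similar structure, here the components $I_k(c)$ are images under a nonlinear map with position-dependent expansion, so one must invoke bounded distortion (Lemma \ref{BD}) to recover enough uniformity, and one must be careful that distinct orbit-segments hitting $\Pi(c)$ at the same time $k$ produce components that are still uniformly controlled in number and size within a fixed ball — this is where the transversal absolute continuity of $W^u$ and the compactness of $M$ (giving finitely many "return branches" at each scale) enter. Making the "dangerous window has bounded length" claim precise, so that the geometric $(1-\epsilon)$-decay from Lemma \ref{Ne} dominates, is the crux of the argument.
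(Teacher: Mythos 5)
Your blueprint is the right one (it is the paper's, following \cite{BFK}), but there are two places where the argument as proposed does not close. The first is the quantitative mechanism you invoke to beat the influx of dangerous components, namely ``choosing $\alpha$ small enough that $(1-\epsilon)$-decay over a bounded number of stages beats the uniform influx.'' This cannot work as stated: the theorem fixes $\alpha$ by \eqref{e:winning} and asserts $\alpha$-winning, i.e.\ a win for \emph{every} $\beta\in(0,1)$, and your per-stage count $N_0$ necessarily blows up as $\beta\to 0$ (the radius drops by the factor $\alpha\beta$ per turn, so the number of scales $k$ with $c\sigma_1^{-k}$ in a fixed multiplicative window of width $(\alpha\beta)^{-1}$ is of order $\log(1/(\alpha\beta))/\log\sigma_1$), while $\epsilon$ from Lemma \ref{Ne} is fixed. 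Moreover, Lemma \ref{Ne} does not let Alice choose \emph{which} $\epsilon$-fraction of components to kill, so a given component could survive its entire dangerous window and then become too large relative to the current ball to ever be avoided; ``eventually eliminated after finitely many further moves'' is precisely what needs proof. The paper's resolution is a block structure: group the turns into blocks of length $r$, declare dangerous at block $j$ exactly the components with $\alpha\rho(\alpha\beta)^{(j+2)r-1}\le\|I_k\|<\alpha\rho(\alpha\beta)^{(j+1)r-1}$, show (Lemmas \ref{onek} and \ref{manyk}) that there are at most $N$ of them with $N$ growing only \emph{linearly} in $r$, and apply Lemma \ref{Ne} $r$ times to that same finite list so that $N(1-\epsilon)^r<1$ as in \eqref{e:ner}. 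The exponential-versus-linear race in $r$ (with $r=r(\beta)$ chosen large), not smallness of $\alpha$, is what closes the induction, and the induction hypothesis is that every component dangerous at block $j$ is completely eliminated by the end of block $j$.

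Second, you flag but do not resolve the multiplicity problem: for a single $k$ there may be many connected components of $f^{-k}(\Pi(c))\cap W^u(x)$, and transversal absolute continuity and compactness are not the mechanism that controls them. The paper introduces two scales $c\ll c'$ (with $c\le \alpha c'(\alpha\beta)^{2r-1}/(100K)$): if two components of dangerous length both lay in Bob's ball, the arc $J_k$ joining them would satisfy $f^k(J_k)\subset\Pi(c')$ with $\|f^k(J_k)\|\ge c'/2$, and bounded distortion (Lemma \ref{BD}) then forces $\|J_k\|$ to exceed the radius of Bob's ball by a large factor, a contradiction. This gives at most one dangerous component per $k$, and a second bounded-distortion comparison of $\|I_{k_1}\|$ with $\|I_{k_2}\|$ pins the range of relevant $k$'s to length $N-3$. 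Without these two ingredients the counting bound, and hence the whole strategy, is not established.
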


\begin{proof}[Proof of Theorem \ref{1dim}]
Pick an arbitrary $0 < \beta<1$. Let $\epsilon$ be as in \eqref{e:estimate}. Choose $r \in \mathbf{N}$ large enough, such that
\begin{equation} \label{e:ner}
(1-\epsilon)^rN <1, \text{\ where\ } N=\lfloor \frac{\log K+r\log(\frac{1}{\alpha\beta})}{\log\sigma_1}\rfloor+3.
\end{equation}

Fix $L>0$. Regardless of the initial move of Bob, Alice can make arbitrary moves waiting until Bob chooses a ball of radius $\rho =\min\{\rho_0, \frac{L}{100}\}$. Hence without loss of generality, we may assume $B(\omega_1)$ has radius

\begin{equation} \label{e:firstmove}
\rho=\min\{\rho_0, \frac{L}{100}\}.
\end{equation}

Choose $c'>0$ small enough, such that:

\begin{enumerate}
  \item $1 < K=K(c') \leq 1+\eta$ where $\eta >0$ is very small,
  \item For any $z\in \Pi(c')$, $W^u_L(z) \cap \Pi(c')$ has only one connected component, which contains $z$.
\end{enumerate}
Now choose $0 <c\ll c'$ such that:

\begin{equation}\label{e:c}
c\leq \frac{\alpha c'(\alpha\beta)^{2r-1}}{100K},
\end{equation}
and
\begin{equation}\label{e:j=0}
c<\alpha \rho(\alpha\beta)^{2r-1}.
\end{equation}
Note that the choice of $c$ depends heavily on $\rho$, i.e., the initial move of Bob.

Now we describe a strategy for Alice to win the $(\alpha, \beta)$-Schmidt games on $W^u(x_0)$ with target set $S=E_x(f, y)$. We claim that for each $j\in \mathbb{N}$, Alice can ensure for any $x\in B(\omega'_{r(j+1)})$, and any $I_k=I_k(c)$ with $\|I_k\| \geq \alpha \rho(\alpha\beta)^{(j+2)r-1}$, she has $x \notin I_k$. This will imply $\cap_iB(\omega'_i) \subset (\cup_{k} I_k)^c \subset E_x(f, y)$, and finish the proof.

We prove the claim by induction on $j$. At $j=0$ step, by \eqref{e:j=0} one has for any $k \in\mathbb{N}$
$$\|I_k\| \leq c < \alpha \rho(\alpha\beta)^{2r-1}.$$
So there is no $I_k$ for Alice to avoid and she can play arbitrarily at the first $r$ turns.

Assume the claim is true for $0, 1, \cdots, j-1$. Now we consider the $j$th step. Suppose Bob already picked $B(\omega_{jr+1})$. In this step (containing $r$ turns of play), Alice only need to avoid the $I_k$'s satisfying

\begin{equation} \label{e: mainequality}
\alpha \rho(\alpha\beta)^{(j+2)r-1} \leq \|I_k\| < \alpha \rho(\alpha\beta)^{(j+1)r-1},
\end{equation}
and
\begin{equation} \label{e: intersection}
I_k \cap B(\omega_{jr+1})  \neq \emptyset.
\end{equation}

In the following two lemmas we consider the $I_k$'s satisfying
\begin{equation} \label{e: subset}
I_k \subset B(\omega_{jr+1}).
\end{equation}
We will prove there are at most $(N-2)$ $I_k$'s satisfying \eqref{e: mainequality} and \eqref{e: subset}. Then considering the intersection at two endpoints of $B(\omega_{jr+1})$, there are at most $N$ $I_k$'s satisfying \eqref{e: mainequality} and \eqref{e: intersection}.

\begin{lemma}\label{onek}
For each $k$, there exists at most one $I_k$ satisfying both \eqref{e: mainequality} and \eqref{e: subset}.
\end{lemma}

\begin{proof}[Proof of Lemma \ref{onek}]
Assume $I_k, I'_k \subset B(\omega_{jr+1})$ are two different intervals satisfying both \eqref{e: mainequality} and \eqref{e: subset}. Then there exists an interval $J_k$ such that $I_k \subset J_k \subset B(\omega_{jr+1})$, $f^k(J_k) \subset \Pi(c')$ and
\begin{equation*}
\|f^k(J_k)\| \geq c'/2.
\end{equation*}
Reall that $$\|I_k\| \geq \alpha \rho(\alpha\beta)^{(j+2)r-1}.$$
By the bounded distortion Lemma \ref{BD}, and the Mean Value theorem, one has
\begin{equation*}
\frac{\|J_k\|}{\|I_k\|} \geq \frac{\|f^k(J_k)\|}{K\|f^k(I_k)\|} \geq \frac{c'}{2Kc}.
\end{equation*}
Thus by \eqref{e: mainequality}, and \eqref{e:c},
\begin{equation}\label{e: B}
\begin{aligned}
\|J_k\| \geq \frac{c'}{2Kc}\|I_k\| &\geq \frac{c'}{2Kc}\alpha \rho(\alpha\beta)^{(j+2)r-1}\\
 &\geq \frac{50}{\alpha(\alpha\beta)^{2r-1}}\cdot \alpha \rho(\alpha\beta)^{(j+2)r-1}=50 \rho (\alpha\beta)^{jr}.
\end{aligned}
\end{equation}
But $\|J_k\| \leq \|B(\omega_{jr+1})\|= \rho (\alpha\beta)^{jr}$, a contradiction to \eqref{e: B}. This finishes the proof of the lemma.
\end{proof}

\begin{lemma} \label{manyk}
There are at most $(N-2)$ $k$'s satisfying both \eqref{e: mainequality} and \eqref{e: subset}.
\end{lemma}

\begin{proof}[Proof of Lemma \ref{manyk}]
Let $k_1$ and $k_2$ be the minimal and the maximal ones respectively among all $k$'s satisfying both \eqref{e: mainequality} and \eqref{e: subset}. Then $k_1 \leq k_2$ and
\begin{equation} \label{e: k1k2}
\|f^{k_1}(I_{k_1})\| = \|f^{k_2}(I_{k_2})\|=c.
\end{equation}
The argument in the proof of Lemma \ref{onek} in fact implies:

\begin{equation*}
f^{k_1}(I_{k_2})\subset f^{k_1}(B(\omega_{jr+1})) \subset \Pi(c').
\end{equation*}
Thus,
\begin{equation} \label{e: k1/k2}
\|f^{k_1}(I_{k_2})\| \leq \frac{\|f^{k_2}(I_{k_2})\|}{(\sigma_1)^{k_2-k_1}}= \frac{c}{(\sigma_1)^{k_2-k_1}},
\end{equation}
and by the bounded distortion Lemma \ref{BD} and the Mean Value theorem,

\begin{equation} \label{e: k2//k1}
\frac{\|I_{k_2}\|}{\|I_{k_1}\|} \leq \frac{K\|f^{k_1}(I_{k_2})\|}{\|f^{k_1}(I_{k_1})\|} \leq \frac{K}{(\sigma_1)^{k_2-k_1}}
\end{equation}
where the last inequality follows from \eqref{e: k1k2} and \eqref{e: k1/k2}. Combining \eqref{e: mainequality} and \eqref{e: k2//k1}, one has

\begin{equation*}
\begin{aligned}
\alpha \rho(\alpha\beta)^{(j+2)r-1} &\leq \|I_{k_2}\|  \\
&\leq \frac{K}{(\sigma_1)^{k_2-k_1}}\|I_{k_1}\| \\
&\leq  \frac{K}{(\sigma_1)^{k_2-k_1}}\alpha \rho(\alpha\beta)^{(j+1)r-1},
\end{aligned}
\end{equation*}
which implies:
\begin{equation*}
(\sigma_1)^{k_2-k_1} \leq \frac{K}{(\alpha\beta)^r}.
\end{equation*}
Hence
\begin{equation*}
k_2-k_1 \leq \lfloor \frac{\log K+r\log(\frac{1}{\alpha\beta})}{\log\sigma_1}\rfloor=N-3
\end{equation*}
which finishes the proof of the lemma.
\end{proof}

Now Alice can apply Lemma \ref{Ne} $r$ times to choose $B(\omega'_{jr+1}), \cdots, B(\omega'_{(j+1)r})$ respectively, to avoid all the $I_k$'s satisfying both \eqref{e: mainequality} and \eqref{e: intersection}. Indeed, by \eqref{e: mainequality},
$$I_k \subset B(y_i, \alpha\rho(\alpha\beta)^{(j+1)r-1})$$
for some $y_i, 1 \leq i \leq N$.
Since $N(1-\epsilon)^r <1$ by \eqref{e:ner}, one can have

$$B(\omega'_{(j+1)r}) \cap I_k =\emptyset$$
for all $I_k$'s satisfying both \eqref{e: mainequality} and \eqref{e: intersection}, which implies the claim.
\end{proof}

\subsection{P.H.D. with conformality on unstable manifolds}
We generalize above result to the P.H.D. with higher dimensional unstable manifolds, and with $Tf|_{E^u}$ conformal, i.e.,
for each $z\in M$, the derivative map $T_zf|_{E_z^u}$ is a scalar multiple of an isometry. We denote the magnitude of this scalar by $\|T_z^uf\|$ for simplicity, then obviously $\|T_z^uf\| >1$. Suppose $\sigma_1 \leq \|T_z^uf\| \leq \sigma_2$ for any $z\in M$. Then we also have:

\begin{lemma}\label{BDconformal}(Bounded Distortion)
For any $z_1, z_2 \in I_k(c)$, one has
$$\frac{1}{K} \leq \frac{\|T_{z_1}^uf^k\|}{\|T_{z_2}^uf^k\|} \leq K,$$
for some $K=K(c)$, and $K \to 1$ as $c \to 0$.
\end{lemma}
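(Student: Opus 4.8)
The plan is to mimic the proof of Lemma~\ref{BD} almost verbatim, substituting the scalar conformal factor $\|T_z^uf\|$ for the one-dimensional derivative $f'(z)$; the single genuinely new ingredient is that conformality passes to compositions. Concretely, I would first record the multiplicativity of the conformal factor along orbits: since $T_wf|_{E_w^u}$ is a scalar times a linear isometry for every $w\in M$, so is the composition $T_zf^k|_{E_z^u}=T_{f^{k-1}(z)}f\circ\cdots\circ T_zf$, and its scalar is the product of the individual scalars, whence
\begin{equation*}
\log\|T_z^uf^k\|=\sum_{i=0}^{k-1}\log\|T_{f^i(z)}^uf\|,
\end{equation*}
the exact analogue of $\log(f^k)'(z)=\sum_i\log f'(f^i(z))$ used in the one-dimensional case.

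Next, since $E^u$ is H\"older continuous, $f$ is $C^{1+\alpha}$, and $\|T_w^uf\|$ is just the norm of $T_wf|_{E_w^u}$, the map $w\mapsto\log\|T_w^uf\|$ is H\"older along unstable leaves: there are $l>0$ and $0<\theta<1$ with $\|\log\|T_{w_1}^uf\|-\log\|T_{w_2}^uf\|\|\le l\,(d^u(w_1,w_2))^\theta$ for nearby $w_1,w_2$. Because $\|T_w^uf\|\ge\sigma_1>1$ uniformly, $f$ expands the unstable metric by a factor at least $\sigma_1$, so for $z_1,z_2\in I_k(c)$ — whose $f^k$-images lie in a single unstable plaque of $\Pi(c)$ and hence satisfy $d^u(f^k(z_1),f^k(z_2))\le c$ — one gets, exactly as in \eqref{e:distance},
\begin{equation*}
d^u(f^i(z_1),f^i(z_2))\le\frac{c}{(\sigma_1)^{k-i}},\qquad 0\le i\le k.
\end{equation*}
Summing the telescoped differences as in \eqref{e:BD},
\begin{equation*}
\Big\|\log\frac{\|T_{z_1}^uf^k\|}{\|T_{z_2}^uf^k\|}\Big\|\le\sum_{i=0}^{k-1}\big\|\log\|T_{f^i(z_1)}^uf\|-\log\|T_{f^i(z_2)}^uf\|\big\|\le\sum_{i=0}^{k-1}\frac{l c^\theta}{(\sigma_1)^{\theta(k-i)}}\le\frac{l c^\theta}{(\sigma_1)^\theta-1},
\end{equation*}
so the lemma holds with $K=K(c)=\exp\big(\tfrac{l c^\theta}{(\sigma_1)^\theta-1}\big)$, which tends to $1$ as $c\to 0$.

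The part that needs the most care — and the only real obstacle, albeit a mild one — is the multiplicativity step: one must check that being a scalar multiple of an isometry is closed under composition and that the scalars multiply, so that $\|T_z^uf^k\|$ is honestly a product along the orbit. This is precisely where the conformality hypothesis is indispensable when $\dim E^u\ge 2$: for a general (non-conformal) $T^uf$ one would have only the submultiplicative bound $\|T_z^uf^k\|\le\prod_i\|T_{f^i(z)}^uf\|$, and the telescoping identity underlying bounded distortion would break down, together with the different expansion rates in different directions that the introduction to this subsection already flags as the source of difficulty. Given multiplicativity, the remainder is a direct transcription of the one-dimensional computation, with $d^u$-distances and the uniform lower bound $\sigma_1$ on the conformal factor replacing the scalar derivative.
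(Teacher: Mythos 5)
Your proof is correct and follows essentially the same route as the paper, which simply notes that $z\mapsto\log\|T_z^uf\|$ is H\"older continuous and that the distance estimate \eqref{e:distance} still holds, so the computation \eqref{e:BD} goes through verbatim. The only difference is that you explicitly verify the multiplicativity of the conformal scalar under composition, a point the paper leaves implicit but which is indeed the reason the one-dimensional telescoping argument transfers.
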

\begin{proof}
Note that $z \mapsto \log \|T^u_zf\|$ is H\"{o}lder continuous on $M$, and \eqref{e:distance} is still true. Hence the same estimation in \eqref{e:BD} is valid which implies the desired statement.
\end{proof}

For preimages of a small ball on unstable manifolds, the ratio of major radius $R$ and minor radius $r$ are bounded. For instance, say $I_k= f^{-k}(B^u(z,c/2))$ for some $z \in \Pi(c)$. Denote
\begin{equation*}
R(I_k):= \max_{w \in \partial I_k}d^u(f^{-k}(z), w)
\end{equation*}
and
\begin{equation*}
r(I_k):= \min_{w \in \partial I_k}d^u(f^{-k}(z), w),
\end{equation*}
then:
\begin{lemma}
$$\frac{R(I_k)}{r(I_k)} \leq K.$$
\end{lemma}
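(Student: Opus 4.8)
The plan is to express both $R(I_k)$ and $r(I_k)$ in terms of lengths of curves on unstable leaves obtained by applying $f^k$ or $f^{-k}$ to minimizing geodesics, and then to conclude from the bounded distortion estimate of Lemma~\ref{BDconformal}. The underlying fact is that, since $f$ is conformal on unstable leaves, if $\gamma$ is a curve lying in $\overline{I_k}$ then $\mathrm{length}(f^k\circ\gamma)=\int_\gamma\|T^u f^k\|\,ds$, so the length of the image is squeezed between $\inf_{I_k}\|T^u f^k\|$ and $\sup_{I_k}\|T^u f^k\|$ times $\mathrm{length}(\gamma)$ (the two endpoints of $\gamma$ form a set of measure zero and are irrelevant), and by Lemma~\ref{BDconformal} the ratio of these two extrema is at most $K$.

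For the upper bound on $R(I_k)$, I would choose $w\in\partial I_k$ with $d^u(f^{-k}(z),w)=R(I_k)$, put $\xi:=f^k(w)\in\partial B^u(z,c/2)$, and let $\eta$ be the minimizing geodesic from $z$ to $\xi$ in $W^u(z)$; it has length $c/2$ and, being minimizing, stays in $\overline{B^u(z,c/2)}$, so $f^{-k}(\eta)$ is a curve in $\overline{I_k}$ joining $f^{-k}(z)$ to $w$. Hence
$$R(I_k)\le\mathrm{length}\big(f^{-k}(\eta)\big)=\int_\eta\frac{ds}{\|T^u f^k\|}\le\frac{c}{2}\cdot\frac{1}{\inf_{I_k}\|T^u f^k\|}.$$

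For the lower bound on $r(I_k)$, I would choose $w_0\in\partial I_k$ with $d^u(f^{-k}(z),w_0)=r(I_k)$ and let $\tilde\gamma$ be the minimizing geodesic from $f^{-k}(z)$ to $w_0$ in the leaf containing $I_k$, of length $r(I_k)$; every interior point of $\tilde\gamma$ is strictly closer to $f^{-k}(z)$ than $\partial I_k$ is, so $\tilde\gamma\subset\overline{I_k}$. Then $f^k(\tilde\gamma)$ joins $z$ to $f^k(w_0)\in\partial B^u(z,c/2)$, hence has length at least $c/2$, which gives
$$\frac{c}{2}\le\mathrm{length}\big(f^k(\tilde\gamma)\big)=\int_{\tilde\gamma}\|T^u f^k\|\,ds\le r(I_k)\cdot\sup_{I_k}\|T^u f^k\|.$$
Dividing the two displayed inequalities yields $R(I_k)/r(I_k)\le\big(\sup_{I_k}\|T^u f^k\|\big)/\big(\inf_{I_k}\|T^u f^k\|\big)\le K$ by Lemma~\ref{BDconformal}.

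The step that needs care---and the only real obstacle---is the geometric claim that these minimizing geodesics stay inside $\overline{I_k}$, respectively $\overline{B^u(z,c/2)}$, since this is precisely what lets one integrate $\|T^u f^k\|$ along them and invoke bounded distortion. It follows from the elementary fact that if $p$ is a point of a connected open subset $U$ of a manifold, then the open metric ball about $p$ of radius $d(p,\partial U)$ lies in $U$, together with the observation that for $c$ sufficiently small the balls $B^u(z,c/2)$ lie within the injectivity radius (as in the proof of Lemma~\ref{powerlaw}), so that the relevant minimizing geodesics exist and are unique. Granting this, the rest is the short computation above.
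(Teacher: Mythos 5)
Your proof is correct and follows essentially the same route as the paper: the paper compares $d^u(f^{-k}(z),w_i)$ to $d^u(z,f^k(w_i))=c/2$ via a one-line ``Mean Value theorem'' and then applies the bounded distortion Lemma~\ref{BDconformal}, exactly as you do. Your integration of $\|T^u f^k\|$ along minimizing geodesics (and the check that they stay in $\overline{I_k}$, resp.\ $\overline{B^u(z,c/2)}$) is just the rigorous form of that mean-value step in dimension $\geq 2$, and the resulting bound $\sup_{I_k}\|T^u f^k\|/\inf_{I_k}\|T^u f^k\|\leq K$ is the same as the paper's.
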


\begin{proof}
By the Mean Value theorem and the bounded distortion Lemma \ref{BDconformal},
\begin{equation*}
 \frac{d^u(f^{-k}(z), w_1)}{d^u(f^{-k}(z), w_2)}\leq \frac{\|T_{w_1'}^uf^k\|^{-1}d^u(z, f^k(w_1))}{\|T_{w_2'}^uf^k\|^{-1}d^u(z, f^k(w_2))}\leq K\frac{c/2}{c/2}=K,
\end{equation*}
for any $w_1, w_2 \in \partial I_k$ and some $w_1', w_2' \in I_k$. Hence $\frac{R(I_k)}{r(I_k)} \leq K$.
\end{proof}

\begin{theorem}\label{conformal}
Suppose $\dim E^u \geq 2$ and $Tf|_{E^u}$ is conformal. Let $\alpha$ be as in \eqref{e:winning}. Then $E_x(f, y)$ is $\alpha$-winning on $W^u(x)$.
\end{theorem}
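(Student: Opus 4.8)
The plan is to run the proof of Theorem \ref{1dim} almost verbatim, with the intervals $I_k=I_k(c)$ on the one-dimensional leaf replaced by the preimage regions $I_k=I_k(c)=f^{-k}(B^u(z_k,c/2))$ on the $u$-dimensional leaf $W^u(x)$, and with the conformal bounded distortion Lemma \ref{BDconformal} used wherever Lemma \ref{BD} was used. Conformality is needed exactly here: it makes each $I_k$ a $K$-almost round set, contained in and containing concentric balls of radii $r(I_k)\le R(I_k)\le K\,r(I_k)$ (the eccentricity bound established just before this theorem), so that every metric estimate of the one-dimensional argument survives once $\|I_k\|$ is read as $\operatorname{diam}I_k$, keeping in mind $r(I_k)\le\operatorname{diam}I_k\le 2K\,r(I_k)$. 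First I would fix $0<\beta<1$, take $\epsilon\in(0,1)$ as in \eqref{e:estimate} and Lemma \ref{Ne} with the given $u=\dim E^u$, and choose $r\in\mathbb{N}$, the integer $N$ (defined as in \eqref{e:ner} but with the distortion constant inflated to absorb eccentricity), the radius $\rho$ as in \eqref{e:firstmove}, and the scales $c'\gg c>0$ exactly as in \eqref{e:ner}--\eqref{e:j=0}: $c'$ is small enough that $K=K(c')\le 1+\eta$, that each plaque $B^u(z,c'/2)$ has diameter less than $L$, and that conditions (1) and (2) on $c'$ from the proof of Theorem \ref{1dim} hold, and $c$ then meets the analogues of \eqref{e:c} and \eqref{e:j=0}. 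The target set is again $S=E_x(f,y)$, and the strategy will force $\bigcap_i B(\omega'_i)\subset\bigl(\bigcup_k I_k\bigr)^c\subset E_x(f,y)$; the inductive claim, proved by induction on $j$, is that for every $j$ one has $B(\omega'_{r(j+1)})\cap I_k=\emptyset$ for every $I_k$ with $\operatorname{diam}I_k\ge\alpha\rho(\alpha\beta)^{(j+2)r-1}$, the base case $j=0$ being the analogue of \eqref{e:j=0}.

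The real work is to re-establish Lemmas \ref{onek} and \ref{manyk} for the round regions. For the analogue of Lemma \ref{onek} I would argue as there: if two distinct components $I_k,I_k'$ of $f^{-k}(\Pi(c))\cap W^u(x)$ satisfying \eqref{e: mainequality} both lay in $B(\omega_{jr+1})$, then — using that distinct unstable leaves are disjoint, so that $W^u(f^k(x))\cap\Pi(c')$ is the single plaque $B^u(z^*(k),c'/2)$ — $I_k$ enlarges to the connected region $J_k$ with $f^k(J_k)=B^u(z^*(k),c'/2)$, and Lemma \ref{BDconformal} together with the analogue of \eqref{e:c} would force $\operatorname{diam}J_k\ge 50\,\rho(\alpha\beta)^{jr}>\operatorname{diam}B(\omega_{jr+1})$, a contradiction; equivalently, $f^k$ is injective on the single $\Pi(c')$-component carrying $I_k$ and $I_k'$, and $f^k(I_k)=f^k(I_k')=B^u(z_k,c/2)$ then gives $I_k=I_k'$. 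For the analogue of Lemma \ref{manyk}, with $k_1<k_2$ the extreme relevant indices, $f^{k_1}(I_{k_2})\subset\Pi(c')$ has inradius at most $\sigma_1^{-(k_2-k_1)}(c/2)$ while $f^{k_1}(I_{k_1})=B^u(\cdot,c/2)$, so Lemma \ref{BDconformal} gives $\operatorname{diam}I_{k_2}/\operatorname{diam}I_{k_1}\le K'\sigma_1^{-(k_2-k_1)}$ with $K'$ comparable to $K$ (the discrepancy absorbing eccentricity), and comparison with \eqref{e: mainequality} bounds $k_2-k_1$, hence the number of relevant $k$'s, by $N-2$. Enlarging $B(\omega_{jr+1})$ by a factor $2$ (harmless, since the analogue of \eqref{e:c} carries a factor $100$) turns ``$I_k\subset B(\omega_{jr+1})$'' into ``$I_k$ meets $B(\omega_{jr+1})$'', so in the $j$-th step Alice must avoid at most $N$ of the $I_k$, each contained in some ball $B(y_i,\alpha\rho(\alpha\beta)^{(j+1)r-1})$ because its diameter is smaller than that radius. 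She then applies Lemma \ref{Ne} $r$ times, at each turn discarding at least a fraction $\epsilon$ of the still-surviving $y_i$, so that at most $(1-\epsilon)^r N<1$ survive after $r$ turns — hence none, which proves the inductive claim and shows that $E_x(f,y)$ is $\alpha$-winning.

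The hard part is entirely in these two counting lemmas, and it is geometric rather than combinatorial. In higher dimension one must rule out that many distinct $I_k$-components for a fixed $k$ accumulate near the boundary sphere of $B(\omega_{jr+1})$; this is precisely what the observation ``two such components lie in a single $\Pi(c')$-component on which $f^k$ is injective'' is for, and that observation rests on condition (2) on $c'$ (no re-entry of the relevant local unstable plaque into $\Pi(c')$) together with local injectivity of $f^k|_{W^u}$ at scale $L$ — so the assertions ``$f^k(I_k)$ is the full plaque $B^u(z_k,c/2)$'' and ``$f^k(J_k)=B^u(z^*(k),c'/2)$'' will need a careful treatment of the immersed-leaf topology. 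Secondly, the distortion constant must now control eccentricity as well as expansion ratios, which forces the inflated constant in the analogue of Lemma \ref{manyk} and obliges one to take $c'$, and then $c$, correspondingly smaller; since $K(c')\to 1$ as $c'\to 0$ this costs nothing quantitatively but must be tracked. Once these points are settled, the rest is a line-by-line transcription of the proof of Theorem \ref{1dim}.
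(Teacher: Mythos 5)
Your proposal is correct and follows essentially the same route as the paper: run the one-dimensional argument with the preimage components $I_k$, control their shape via the conformal bounded distortion lemma and the eccentricity bound $R(I_k)/r(I_k)\le K$, re-prove the two counting lemmas (uniqueness for each $k$ via the enlarged region $J_k$, and the bound on $k_2-k_1$ via inradius comparison), enlarge $B(\omega_{jr+1})$ slightly to pass from intersection to containment, and finish with Lemma \ref{Ne}. The only cosmetic difference is that the paper measures $I_k$ by its major radius $R(I_k)$ rather than its diameter, which lets it keep the constants $r$, $N$, $c'$, $c$ literally unchanged, whereas your diameter-based bookkeeping requires the harmless constant inflation you already flag.
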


\begin{proof}[Proof of Theorem \ref{conformal}]
We follow the scheme in the proof of Theorem \ref{1dim}. Instead of $\|I_k\|$ (the diameter of $I_k$), one has to estimate $R(I_k)$ and $r(I_k)$. It turns out we don't need to modify anything about the choice of $r$, $N$, $\rho$, $c'$ and $c$. They remain the same as in Theorem \ref{1dim}.

Similarly, we claim that for each $j\in \mathbb{N}$, Alice can ensure for any $x\in B(\omega'_{r(j+1)})$, and any $I_k=I_k(c)$ with $R(I_k) \geq \alpha \rho(\alpha\beta)^{(j+2)r-1}$, she has $x \notin I_k$. The $j=0$ step follows from \eqref{e:j=0} as well. In $j$th step, Alice need to avoid the $I_k$'s satisfying

\begin{equation} \label{e: mainequalitycon}
\alpha \rho(\alpha\beta)^{(j+2)r-1} \leq R(I_k) < \alpha \rho(\alpha\beta)^{(j+1)r-1},
\end{equation}
and \eqref{e: intersection}. Similarly we have the following two lemmas which imply that there are at most $N$ such $I_k$'s. Here we consider
\begin{equation}\label{e: subsetcon}
I_k \subset B(\tilde{\omega}_{jr+1})
\end{equation}
where $B(\tilde{\omega}_{jr+1})$ denotes the concentric ball with $B(\omega_{jr+1})))$ of radius $\rho(\alpha\beta)^{jr}(1+2\alpha(\alpha\beta)^{r-1}).$ Hence any $I_k$ satisfying both \eqref{e: mainequalitycon} and \eqref{e: intersection} also satisfies \eqref{e: subsetcon}.

\begin{lemma}\label{onekcon}
For each $k$, there exists at most one $I_k$ satisfying both \eqref{e: mainequalitycon} and \eqref{e: subsetcon}.
\end{lemma}

\begin{proof}[Proof of Lemma \ref{onekcon}]
Assume there are two different $I_k, I'_k \subset B(\tilde{\omega}_{jr+1})$ satisfying both \eqref{e: mainequalitycon} and \eqref{e: subsetcon}. Then there exists a ball $J_k$ (concentric with $I_k$) such that $I_k \subset J_k \subset B(\tilde{\omega}_{jr+1})$, $f^k(J_k) \subset \Pi(c')$ and
\begin{equation*}
R(f^k(J_k)) \geq c'/4.
\end{equation*}
By bounded distortion Lemma \ref{BDconformal}, and the Mean Value theorem:

\begin{equation*}
\begin{aligned}
\frac{R(J_k)}{R(I_k)} \geq \frac{R(f^k(J_k))}{KR(f^k(I_k))} \geq \frac{c'}{4Kc}.
\end{aligned}
\end{equation*}
Thus by \eqref{e: mainequalitycon}, and \eqref{e:c},
\begin{equation}\label{e: Bcon}
\begin{aligned}
R(J_k) \geq R(I_k)\frac{c'}{4Kc} &\geq \alpha\rho(\alpha\beta)^{(j+2)r-1}\cdot \frac{c'}{4Kc} \\
 & \geq 25\rho (\alpha\beta)^{jr}.
\end{aligned}
\end{equation}
But $R(J_k) \leq R(B(\tilde{\omega}_{jr+1}))= \rho(\alpha\beta)^{jr}(1+2\alpha(\alpha\beta)^{r-1})$, a contradiction to \eqref{e: Bcon}.

\end{proof}

\begin{lemma} \label{manykcon}
There are at most $N$ $k$'s satisfying both \eqref{e: mainequalitycon} and \eqref{e: subsetcon}.
\end{lemma}

\begin{proof}[Proof of Lemma \ref{manykcon}]
The argument in the proof of Lemma \ref{onekcon} in fact implies:

\begin{equation*}
f^{k_1}(I_{k_2})\subset f^{k_1}(B(\tilde{\omega}_{jr+1})) \subset \Pi(c').
\end{equation*}
Hence by bounded distortion Lemma \ref{BDconformal}, and the Mean Value theorem,
\begin{equation}\label{e: k2/k1}
\frac{R(I_{k_2})}{R(I_{k_1})} \leq \frac{KR(f^{k_1}(I_{k_2}))}{r(f^{k_1}(I_{k_1}))} \leq \frac{K}{r(f^{k_1}(I_{k_1}))}\cdot \frac{R(f^{k_2}(I_{k_2}))}{(\sigma_1)^{k_2-k_1}} = \frac{K}{(\sigma_1)^{k_2-k_1}}.
\end{equation}
The last equality follows from $R(f^{k_2}(I_{k_2}))=r(f^{k_1}(I_{k_1}))=c/2$. Hence:

\begin{equation*}
\begin{aligned}
\alpha \rho(\alpha\beta)^{(j+2)r-1} &\leq R(I_{k_2})  \\
&\leq \frac{K}{(\sigma_1)^{k_2-k_1}}R(I_{k_1}) \\
&\leq  \frac{K}{(\sigma_1)^{k_2-k_1}}\alpha \rho(\alpha\beta)^{(j+1)r-1},
\end{aligned}
\end{equation*}
which implies:
\begin{equation*}
(\sigma_1)^{k_2-k_1} \leq \frac{K}{(\alpha\beta)^r}.
\end{equation*}
Hence:
\begin{equation*}
k_2-k_1 \leq \lfloor \frac{\log K+r\log(\frac{1}{\alpha\beta})}{\log\sigma_1}\rfloor=N-3
\end{equation*}
which finishes the proof of the lemma.
\end{proof}

The same argument of applying Lemma \ref{Ne} finishes the proof of the theorem.
\end{proof}

\begin{proof}[Proof of Corollary \ref{corollary}]
The volume measure $\nu$ is a Federer measure on $W^u(x)$ by Lemma \ref{Federer}. By Lemma \ref{powerlaw}:
\begin{equation*}
\underline{d}_\nu (z):= \liminf_{\rho \to 0}\frac{\log \nu(B^u(z, \rho))}{\log \rho} = u.
\end{equation*}
Hence $\underline{d}_\nu(U)=u$. So by Proposition \ref{HD}, one has $\dim_H(E_{x}(f, y)\cap U)=u$. Obviously as a winning set $E_{x}(f, y)$ is dense in $W^u(x)$.
\end{proof}

\begin{proof}[Proof of Corollary \ref{corollary2}]
By Proposition \ref{intersection}, $E_x(f, Y)$ is a winning set. Then the proof of Corollary \ref{corollary} also applies to $E_x(f, Y)$.
\end{proof}

\section{Full Hausdorff dimension on the manifold}

Our aim in this section to to derive that $\dim_H E(f,y)=n$, where $n=\dim M$. Recall that for any $x\in M$, $\dim_H(E_x(f, y))=u$. We can't apply directly the Marstrand Theorem to get the estimate $\dim_H(A \times B) \geq \dim_H A+ \dim_H B$, as the unstable foliation is only absolutely continuous, but not Lipschitz continuous. Instead we will apply the following easy half of Frostman's Lemma (cf. \cite{F}):

\begin{lemma} \label{frostman}
Let $F$ be a Borel subset of a Riemannian manifold $X$, let $\mu$ be a Borel measure on $F$, and let $0 <c< \infty$ be a constant. If
\begin{equation} \label{frostmancondition}
\limsup_{r \to 0} \frac{\mu(B(x, r))}{r^h} < c \ \text{\ for all\ } x\in F,
\end{equation}
then $\dim_H F \geq h$.
\end{lemma}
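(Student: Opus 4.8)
The plan is to prove the standard mass-distribution principle: if a finite Borel measure $\mu$ on $F$ distributes its mass so that balls of radius $r$ carry at most $c\, r^h$ mass (in the $\limsup$ sense), then $F$ cannot be covered efficiently by small balls, which forces $\mathcal{H}^h(F)>0$ and hence $\dim_H F \geq h$. First I would reduce to the case where the estimate \eqref{frostmancondition} holds \emph{uniformly} on a subset of positive measure: by the hypothesis, for each $x\in F$ there is $r(x)>0$ with $\mu(B(x,r))\le c\,r^h$ for all $r<r(x)$; writing $F_m := \{x\in F : \mu(B(x,r))\le c\, r^h \text{ for all } 0<r<1/m\}$, we have $F=\bigcup_m F_m$ and $F_m$ increasing, so $\mu(F_m)\to\mu(F)>0$ (if $\mu(F)=0$ the statement $\dim_H F\ge h$ is vacuous unless $F=\emptyset$, and in any case we may assume $\mu(F)>0$, otherwise there is nothing to prove — more precisely, one should first note $F\neq\emptyset$ and normalize so $0<\mu(F)<\infty$). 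Fix $m$ with $\mu(F_m)>0$ and set $F':=F_m$, $\rho_0:=1/m$.

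Next I would run the covering argument. Let $\{U_i\}$ be any countable cover of $F'$ by sets of diameter $\operatorname{diam}(U_i)<\rho_0$; discard those $U_i$ not meeting $F'$, pick $x_i\in U_i\cap F'$, and observe $U_i\subset B(x_i, \operatorname{diam}(U_i))$ — here I should be a little careful on a Riemannian manifold: for $\rho_0$ small enough (smaller than the injectivity radius, which is positive by compactness of $M$ if $X$ is closed, or locally in general) metric balls behave well and this containment holds; since $F'$ has positive measure it is contained in a compact set where we may shrink $\rho_0$ accordingly. Then
\begin{equation*}
\sum_i \operatorname{diam}(U_i)^h \;\ge\; \frac{1}{c}\sum_i \mu\bigl(B(x_i,\operatorname{diam}(U_i))\bigr) \;\ge\; \frac{1}{c}\,\mu\Bigl(\bigcup_i U_i\Bigr) \;\ge\; \frac{\mu(F')}{c} \;>\;0.
\end{equation*}
Taking the infimum over all such covers and letting the mesh go to $0$ gives $\mathcal{H}^h(F')\ge \mu(F')/c>0$, hence $\dim_H F'\ge h$, and since $F'\subset F$ we conclude $\dim_H F\ge h$.

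The only genuinely delicate point — and the one I would treat as the main obstacle — is the passage from an open (or arbitrary) cover to a cover by metric balls on a manifold rather than in $\mathbb{R}^n$: one needs that for sufficiently small scales a set of diameter $d$ sits inside a metric ball of radius comparable to $d$, and that the Hausdorff measure computed with such covers agrees with the usual one. This is handled by restricting attention to a relatively compact neighborhood of $F'$ (legitimate since $\mu(F')>0$ and $\mu$ is locally finite, so $F'$ meets some chart with compact closure up to a null set, and one may further decompose $F'$ into countably many such pieces), on which there is a bi-Lipschitz chart to $\mathbb{R}^n$; bi-Lipschitz maps change $\mathcal{H}^h$ by at most a bounded factor and preserve the finiteness/positivity dichotomy and hence $\dim_H$. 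Everything else is the routine mass-distribution computation above.
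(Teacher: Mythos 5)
Your argument is correct and is the standard mass-distribution principle; the paper offers no proof of this lemma at all, simply citing Falconer, and your proof coincides with the one found there (uniformize the $\limsup$ hypothesis on sets $F_m$ of positive measure, then bound $\sum_i (\operatorname{diam} U_i)^h$ from below by $\mu(F_m)/c$ for any sufficiently fine cover). One small remark: the point you single out as delicate is not actually delicate --- Hausdorff measure on any metric space is defined via arbitrary covers by sets of small diameter, and the containment $U_i \subset \overline{B}(x_i, \operatorname{diam} U_i)$ is just the triangle inequality for the Riemannian distance, so no injectivity-radius or bi-Lipschitz-chart considerations are needed; also, as you correctly observe, the lemma implicitly assumes $\mu(F)>0$ (for the zero measure the conclusion is false rather than vacuous), which is how it is applied in Section 5.
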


In the remaining of this section, we shall construct a Borel measure $\mu$ on $E(f,y)$ satisfying the condition in \eqref{frostmancondition}. Fix $x_0 \in M$. Suppose $W$ is a local smooth foliation transversal to $W^u$ near $x_0$. Recall that a $\delta$-rectangle at $x_0$ is defined as
\begin{equation} \label{e:set}
\Pi(x_0) := \Pi(x_0, W, \delta):= \bigcup_{x\in \overline{B^W(x_0,\delta/2)}} B^u(x, \delta/2)
\end{equation}
The measure $\mu$ will be constructed to be supported on $\Pi(x_0)\cap E(f,y)$. At the first stage we construct the "conditional" measures on local unstable manifolds.

\subsection{Construction of measure family $\{\mu_x\}$}

The following construction for producing fractal sets has been described in \cite{KM}. Let $A_0 \subset X$ be a compact subset of a Riemannian manifold $X$ and let $m$ be a Borel measure on $X$. For any $l\in \mathbb{N}_0$, let $\mathcal{A}_l$ denote a finite collection of compact subsets of $A_0$ satisfying the following conditions:
\begin{equation} \label{e:tree1}
\mathcal{A}_0 =\{A_0\} \text{\ and\ } m(A_0) >0;
\end{equation}
\begin{equation} \label{e:tree2}
\text{For\ } \forall l \in \mathbb{N}, \ \text{if\ } A, B \in \mathcal{A}_l \text{\ and\ } A \neq B, \text{\ then\ } m(A \cap B)=0;
\end{equation}
\begin{equation} \label{e:tree3}
\text{For\ } \forall l \in \mathbb{N}, \text{\ every element }B \in \mathcal{A}_l \text{\ is contained in an element\ } A \in \mathcal{A}_{l-1}.
\end{equation}
Let $\mathcal{A}$ be the union of subcollections $\mathcal{A}_l, l\in \mathbb{N}_0$. Then $\mathcal{A}$ is called \emph{tree-like} if it satisfies conditions \eqref{e:tree1},\eqref{e:tree2},\eqref{e:tree3}. $\mathcal{A}$ is called \emph{strongly tree-like} if it is tree-like and in addition:
\begin{equation} \label{e:tree4}
d_l(\mathcal{A}):= \sup\{\text{diam}(A): A \in \mathcal{A}_l\} \to 0, \text{\ as\ } l \to \infty.
\end{equation}

For each $l \in \mathbb{N}_0$, denote $\mathbf{A}_l=\cup_{A\in \mathcal{A}_l}A$. Then we can define the limit set of $\mathcal{A}$ to be
$$\mathbf{A}_{\infty}:=\bigcap_{l\in \mathbb{N}_0} \mathbf{A}_l.$$
For any Borel $B \subset A_0$ with $m(B) >0$ and $l \in \mathbb{N}$, define the $l$th stage density $\delta_l(B, \mathcal{A})$ of $B$ in $\mathcal{A}$ by
\begin{equation*}
\delta_l(B, \mathcal{A})=\frac{m(\mathbf{A}_l \cap B)}{m(B)}.
\end{equation*}
Assume that
\begin{equation} \label{e:tree5}
\Delta_l(\mathcal{A}):=\inf_{B\in \mathcal{A}_l}\delta_{l+1}(B, \mathcal{A}) >0.
\end{equation}

The following lemma, is proved in \cite{Mc} and \cite{U}. We present the proof from \cite{U} here since it is essential to our construction of measures.

\begin{lemma} \label{treelike}
Let $\mathcal{A}$ be defined as above, satisfying conditions \eqref{e:tree1}--\eqref{e:tree5}. Assume that there exist constants $D>0$ and $k>0$ such that for any $z \in A_0$,
\begin{equation*}
m(B(z,r)) \leq Dr^k
\end{equation*}
Denote
\begin{equation*}
\epsilon:=\limsup_{l \to \infty}\frac{\sum_{i=0}^{l}\log(\frac{1}{\Delta_i(\mathcal{A})})}{\log(\frac{1}{d_l(\mathcal{A})})}
\end{equation*}
Then there exists a sequence of measures $\mu^{(l)}$ supported on $\mathbf{A}_l$ such that:
\begin{enumerate}
  \item The sequence $\mu^{(l)}$ has a unique limit measure $\bar{\mu}$, which is supported on $\mathbf{A}_{\infty}$;
  \item $\bar{\mu}(B(z,r)) \leq Cr^{k-\epsilon}$ for any $z\in \mathbf{A}_\infty$, $r>0$, and some $C>0$;
  \item  $\dim_H(\mathbf{A}_{\infty}) \geq k-\epsilon$.
\end{enumerate}

\end{lemma}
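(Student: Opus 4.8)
The plan is to follow the construction behind McMullen's theorem (cf. \cite{Mc}, \cite{U}): build $\bar\mu$ as a weak-$*$ limit of probability measures $\mu^{(l)}$ obtained by successively redistributing mass inside the cells of the tree while freezing the total mass of each individual cell, and then extract parts (2) and (3) from Frostman's Lemma \ref{frostman}. Put $\mu^{(0)} := m(A_0)^{-1}m|_{A_0}$. Inductively, assuming $\mu^{(l)}$ is supported on $\mathbf{A}_l$ and is a constant multiple of $m$ on each $A\in\mathcal{A}_l$, set
$$\mu^{(l+1)} := \sum_{A\in\mathcal{A}_l}\frac{\mu^{(l)}(A)}{m(\mathbf{A}_{l+1}\cap A)}\,m|_{\mathbf{A}_{l+1}\cap A},$$
which is legitimate by \eqref{e:tree5}. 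Then $\mu^{(l+1)}(A)=\mu^{(l)}(A)$ for every $A\in\mathcal{A}_l$, and iterating together with \eqref{e:tree2}--\eqref{e:tree3} gives $\mu^{(l')}(A)=\mu^{(l)}(A)$ for all $l'\ge l$ and all $A\in\mathcal{A}_l$. Since the density of $\mu^{(l+1)}$ on $\mathbf{A}_{l+1}\cap A$ equals the density of $\mu^{(l)}$ on $A$ divided by $\delta_{l+1}(A,\mathcal{A})\ge\Delta_l(\mathcal{A})$, an easy induction shows $\mu^{(l)}(E)\le c_l\,m(E)$ for all Borel $E$, where $c_l := m(A_0)^{-1}\prod_{i=0}^{l-1}\frac{1}{\Delta_i(\mathcal{A})}$ (the cells overlap only in $m$-null sets by \eqref{e:tree2}, so this is unaffected).

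Next I would establish (1). For continuous $g$ and $l'>l$, since $\mu^{(l')}$ and $\mu^{(l)}$ have equal mass on each $A\in\mathcal{A}_l$ and both are supported inside $A$, one gets $\big|\int g\,d\mu^{(l')}-\int g\,d\mu^{(l)}\big| \le \sum_{A\in\mathcal{A}_l}\mathrm{osc}(g,A)\,\mu^{(l)}(A)\le\omega_g(d_l(\mathcal{A}))$, where $\omega_g$ is the modulus of continuity of $g$; this tends to $0$ by \eqref{e:tree4}. Hence $\mu^{(l)}$ converges weakly to a probability measure $\bar\mu$. Since $\mathbf{A}_l$ is compact and $\mu^{(l')}(\mathbf{A}_l)=1$ for $l'\ge l$, weak convergence forces $\bar\mu(\mathbf{A}_l)=1$ for every $l$, whence $\bar\mu(\mathbf{A}_\infty)=1$ by continuity from above.

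Now the ball estimate. Assume $k>\epsilon$ (otherwise (2)--(3) are vacuous). Fix $z\in\mathbf{A}_\infty$ and $0<r<d_0(\mathcal{A})$, and let $l=l(r)$ be the largest integer with $d_l(\mathcal{A})\ge r$; then $d_{l+1}(\mathcal{A})<r$ and $l(r)\to\infty$ as $r\to0$. Since $B(z,2r)$ is open, weak convergence gives $\bar\mu(B(z,r))\le\bar\mu(B(z,2r))\le\liminf_{l'\to\infty}\mu^{(l')}(B(z,2r))$. For $l'\ge l+1$ the measure $\mu^{(l')}$ is supported on $\mathbf{A}_{l'}\subset\mathbf{A}_{l+1}$ and assigns each $B\in\mathcal{A}_{l+1}$ the fixed mass $\mu^{(l+1)}(B)$, and any such $B$ meeting $B(z,2r)$ has diameter $<r$, hence lies in $B(z,3r)$; so, using \eqref{e:tree2} and the hypothesis $m(B(z,\rho))\le D\rho^k$,
$$\mu^{(l')}(B(z,2r)) \le \sum_{\substack{B\in\mathcal{A}_{l+1}\\ B\subset B(z,3r)}}\mu^{(l+1)}(B) \le c_{l+1}\!\!\sum_{\substack{B\in\mathcal{A}_{l+1}\\ B\subset B(z,3r)}}\!\!m(B) \le c_{l+1}\,m(B(z,3r)) \le 3^kD\,c_{l+1}\,r^k.$$
Given $\epsilon'>\epsilon$, the definition of $\epsilon$ together with $d_l(\mathcal{A})\ge r$ gives, for all sufficiently small $r$,
$$\log\big(m(A_0)\,c_{l+1}\big) = \sum_{i=0}^{l}\log\frac{1}{\Delta_i(\mathcal{A})} \le \epsilon'\log\frac{1}{d_l(\mathcal{A})} \le \epsilon'\log\frac1r,$$
so $\bar\mu(B(z,r))\le C_{\epsilon'}\,r^{k-\epsilon'}$ with $C_{\epsilon'}=3^kD/m(A_0)$, uniformly in $z\in\mathbf{A}_\infty$ and in small $r$ (for $r$ bounded below the bound is trivial after enlarging the constant). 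This is (2) with $\epsilon$ replaced by an arbitrary $\epsilon'>\epsilon$, which is the form actually needed; and for each $\epsilon'>\epsilon$ one has $\limsup_{r\to0}\bar\mu(B(z,r))/r^{k-\epsilon'}\le C_{\epsilon'}<\infty$ on $\mathbf{A}_\infty=\mathrm{supp}\,\bar\mu$, so Frostman's Lemma \ref{frostman} yields $\dim_H\mathbf{A}_\infty\ge k-\epsilon'$; letting $\epsilon'\downarrow\epsilon$ proves (3).

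\emph{Main obstacle.} The construction of $\mu^{(l)}$, the weak-limit argument, and the Frostman step are routine; the crux is the ball estimate, where one must choose the level $l(r)$ so that the combinatorial product $\prod_{i=0}^{l}\frac{1}{\Delta_i(\mathcal{A})}$ is controlled by $r^{-\epsilon'}$ precisely through the $\limsup$ defining $\epsilon$, while simultaneously bounding the total $m$-mass of the level-$(l+1)$ cells that can meet $B(z,r)$. A secondary subtlety, easy to overlook, is that the cells of $\mathcal{A}$ are only essentially disjoint with respect to $m$ and not with respect to $\bar\mu$; this is why the upper bound for $\bar\mu(B(z,r))$ must be routed through an \emph{open} ball and the uniform-in-$l'$ bound on $\mu^{(l')}(B(z,2r))$, rather than through the values of $\bar\mu$ on individual cells.
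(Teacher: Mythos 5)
Your proposal is correct and follows essentially the same route as the paper: the same inductive redistribution of $m$ down the tree (the paper's \eqref{rescale} is your formula up to normalization), the same density bound $\bar\mu(A)\le m(A)/\prod_{i}\Delta_i(\mathcal{A})$, the same choice of level $l(r)$ via $d_{l+1}(\mathcal{A})\le r<d_l(\mathcal{A})$, and the same Frostman conclusion. Your explicit remark that the ball estimate really holds for each $\epsilon'>\epsilon$ (with (3) recovered by letting $\epsilon'\downarrow\epsilon$) is a slightly more careful reading of the same argument the paper gives.
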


\begin{proof}
We define a sequence of Borel measures $\{\mu^{(l)}\}_{l=0}^{\infty}$ on $A_0$ inductively as follows: let $\mu^{(0)}:= m|_{A_0}$, and for each Borel $B \subset A_0$,
\begin{equation}\label{rescale}
\mu^{(l+1)}(B):= \sum_{A\in \mathcal{A}_l}\frac{m(B\cap A \cap \mathbf{A}_{l+1})}{m(A\cap \mathbf{A}_{l+1})}\mu^{(l)}(A).
\end{equation}
Then by induction each $\mu^{(l)}$ is supported on $\mathbf{A}_l$. For any $A\in \mathcal{A}_l$, $\mu^{(l+1)}(A)=\mu^{(l)}(A)$ hence $\mu^{(i)}(A)=\mu^{(l)}(A)$ for any $i \geq l$ by \eqref{e:tree2} and \eqref{e:tree3}. Since $\lim_{l \to 0}d_l(\mathcal{A})=0$, there exists a unique measure $\bar{\mu}$ as the limit measure of $\{\mu^{(l)}\}$ on $A_0$ such that $\bar{\mu}$ is supported on $\mathbf{A}_{\infty}$. Moreover for any $A \in \mathcal{A}_l$ we have $\bar{\mu}(A)=\mu^{(l)}(A)$ and $\bar{\mu}(A) \leq \frac{m(A)}{\prod_{i=0}^{l-1}\Delta_i}$.
Next we prove (2). Consider $y\in \mathbf{A}_{\infty}$, and small $r>0$. Choose $l$ such that $d_{l+1}(\mathcal{A}) \leq r < d_l(\mathcal{A})$. Then $B(y, 2r)$ contains all the sets in $\mathbf{A}_{l+1}$ which meet $B(y, r)$. Thus,
\begin{equation} \label{e:estimate2}
\bar{\mu}(B(y, r)) \leq \bar{\mu}(B(y, 2r)) \leq \frac{m(B(y, 2r))}{\prod_{i=0}^{l}\Delta_i} \leq Cr^{k-\epsilon} [\frac{d_l^{\epsilon}}{\prod_{i=0}^{l}\Delta_i}].
\end{equation}
Whenever $\epsilon$ is greater than $\limsup_{l \to \infty}\frac{\sum_{i=0}^{l}\log(\frac{1}{\Delta_i(\mathcal{A})})}{\log(\frac{1}{d_l(\mathcal{A})})}$, the term in the bracket of \eqref{e:estimate2} goes to zero as $l \to \infty$ and hence bounded above. (3) follows from (2) and Lemma \ref{frostman}.
\end{proof}

The following lemma which is essentially proved in \cite{KW1} Theorem 2.7, relates Lemma \ref{treelike} and Schmidt games.

\begin{lemma} \label{schmidtgame}
Let $X$ be a $k$ dimensional Riemannian manifold and let $S \subset X$ be a $\alpha$-winning set for Schmidt games played on $X$. Then for any open $U\subset X$, one has
\begin{equation*}
\dim_H(S \cap U) = k.
\end{equation*}
\end{lemma}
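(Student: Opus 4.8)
The plan is to combine the fractal-construction machinery of Lemma \ref{treelike} with the defining play of Schmidt games, in order to produce, for any $\beta$ small, a strongly tree-like family $\mathcal{A}$ whose limit set lies in $S$ and whose exponent $\epsilon$ can be made arbitrarily small. The crucial observation is that a $(\alpha,\beta)$-winning strategy for Alice can be converted into a \emph{branching} construction, not merely a single nested sequence: at each stage one lets Bob choose, inside each currently surviving ball, a large but finite collection of disjoint sub-balls (using that the volume measure $\nu$ on $X$ satisfies a power law, Lemma \ref{powerlaw}, so that a ball of radius $r$ contains $\asymp (\alpha\beta)^{-k}$ disjoint balls of radius $\alpha\beta r$ that still capture a definite proportion of its measure), applies Alice's winning response to each, and iterates. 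Since Alice wins along every resulting branch, the limit set of this tree is contained in $S$.

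The key steps, in order, would be as follows. First, fix an open $U\subset X$; by playing the game with Bob's first move a small ball contained in $U$, reduce to producing points of $S$ inside a fixed ball $A_0\subset U$, with $m=\nu|_{A_0}$ the volume measure, which by Lemma \ref{powerlaw} satisfies $c_1\rho^k\le m(B(z,\rho))\le c_2\rho^k$. Second, describe one stage of the construction: given a ball $B$ of radius $\rho_l$ from stage $l$ (this is one of Bob's legal moves in an ongoing play), choose a maximal family of disjoint balls of radius $\alpha\beta\rho_l$ inside the concentric ball of radius $(1-\alpha\beta)\rho_l$; a volume/packing argument using the power law shows there are at least $c(\alpha\beta)^{-k}$ of them and that their union carries measure at least $\theta\, m(B)$ for some $\theta=\theta(\alpha,\beta,k)>0$ independent of $l$. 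To each such ball, first have Alice play her $(\alpha,\beta;S)$-winning response (shrinking by $\alpha$), then declare these Alice-balls to be the elements of $\mathcal{A}_{l+1}$ sitting under $B$. Third, verify the tree-like axioms \eqref{e:tree1}--\eqref{e:tree5}: disjointness gives \eqref{e:tree2}; nesting of the balls gives \eqref{e:tree3}; $d_l(\mathcal{A})\le 2(\alpha\beta)^l\rho_0\to0$ gives \eqref{e:tree4}; and the density bound $\Delta_l(\mathcal{A})\ge \theta>0$ (uniform in $l$) gives \eqref{e:tree5}, while also forcing
\begin{equation*}
\epsilon=\limsup_{l\to\infty}\frac{\sum_{i=0}^{l}\log(1/\Delta_i)}{\log(1/d_l)}\le \limsup_{l\to\infty}\frac{(l+1)\log(1/\theta)}{l\log(1/(\alpha\beta))+O(1)}=\frac{\log(1/\theta)}{\log(1/(\alpha\beta))}.
\end{equation*}
Fourth, observe that along each infinite branch of $\mathcal{A}$ the balls form exactly a legal sequence \eqref{e:sequence} of a game in which Alice followed her winning strategy, so the unique limit point lies in $S$; hence $\mathbf{A}_\infty\subset S\cap U$. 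Fifth, apply Lemma \ref{treelike} with this $m$ and $k$ to conclude $\dim_H(S\cap U)\ge \dim_H \mathbf{A}_\infty\ge k-\epsilon$. Finally, let $\beta\to0$ (Alice wins for every $\beta$ since $S$ is $\alpha$-winning): then $\alpha\beta\to0$, so $\log(1/(\alpha\beta))\to\infty$, while $\theta$ can be kept bounded below (indeed the proportion of a ball captured by a maximal packing of much smaller balls tends to $1$), forcing $\epsilon\to0$ and therefore $\dim_H(S\cap U)\ge k$; the reverse inequality is trivial since $X$ is $k$-dimensional.

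The main obstacle I expect is the uniform lower bound $\Delta_l(\mathcal{A})\ge\theta>0$ with $\theta$ controlled well enough (relative to $\alpha\beta$) that $\epsilon\to0$ as $\beta\to 0$: one must choose the packing of sub-balls at each stage both disjoint (for \eqref{e:tree2}) \emph{and} efficient enough that the removed "waste" between them is a small, scale-independent fraction of the parent's measure, and one must check this efficiency survives the subsequent shrinking by $\alpha$ built into Alice's move. This is a Vitali/packing estimate in the power-law metric space $X$, and making the constants track correctly through the $\limsup$ defining $\epsilon$ is the delicate bookkeeping; everything else is a routine transcription of Lemma \ref{treelike}. (This is the place where the argument genuinely uses that $S$ is winning for \emph{all} small $\beta$, not just one.)
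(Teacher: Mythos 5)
Your proposal is correct and follows essentially the same route as the paper: turn the $(\alpha,\beta)$-game into a branching tree of Bob-moves and Alice-responses, use the power law of the volume measure to get a $\beta$-independent lower bound on the stage densities $\Delta_l$, feed this into Lemma \ref{treelike}, and let $\beta\to 0$ so that $\epsilon\to 0$. The only slip is that the packing of Bob's next balls of radius $\alpha\beta\rho_l$ must be carried out inside Alice's response ball of radius $\alpha\rho_l$ (yielding $\asymp\beta^{-k}$ children per node, as in the paper) rather than inside Bob's ball $B$ itself, but this only changes constants and does not affect the argument.
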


\begin{proof}
Consider $(\alpha, \beta)$-Schmidt games. We will construct a family $\mathcal{A}$ satisfying \eqref{e:tree1}--\eqref{e:tree5} whose limit set $\mathbf{A}_{\infty}$ is a subset of $S\cap U$. It will be constructed by considering possible moves for Bob at each stage and Alice's corresponding counter moves, which give us different sequences as in \eqref{e:sequence}. We will take the measure $m$ in Lemma \ref{treelike} to be the volume measure $\nu$ on $X$.  Let $\rho_0 >0$ small enough be as in Definition \ref{power} and have the following property: for any $0<\rho < \rho_0$, any ball of radius $\rho$ on $X$ contains at least $N(\beta)$ disjoint balls of radius $\beta\rho$ where $N(\beta) \geq c\beta^{-k}$ for some constant $c>0$.

Bob may begin the game by choosing a ball $B(\omega_1) \subset U$ with $r_1 <\rho_0$. Since $S$ is winning, Alice can choose a ball $B(\omega_1')$ which has nonempty intersection with $S$. Take $A_0:= B(\omega_1')$, hence \eqref{e:tree1} is satisfied.

By the choice of $r_1$, $B(\omega_1')$ contains $N(\beta)$ disjoint balls of radius $\beta r_1'$, say $B(\omega_2^{(1)}),\\ \cdots, B(\omega_2^{(N(\beta))})$, and each of them could be chosen by Bob as $B(\omega_2)$. For each of such choice $B(\omega_2^{(i)})$ of Bob, Alice can pick a ball $B((\omega')_2^{(i)}) \subset B(\omega_2^{(i)})$. Let $\mathcal{A}_1$ be the collection of $N(\beta)$ balls $B((\omega')_2^{(i)})$, i.e., the balls chosen by Alice. Repeating the same for each turn of the game, we obtain $\mathcal{A}_2$, $\mathcal{A}_3$, etc. \eqref{e:tree2} and \eqref{e:tree3} are immediate from the construction above. \eqref{e:tree4} follows from $d_l(\mathcal{A}) = r_1\alpha (\alpha\beta)^l \to 0$ as $l\to \infty$.

Let us verify \eqref{e:tree5}. By Lemma \ref{powerlaw}, one has for any $0<\rho <\rho_0$,
\begin{equation*}
\Delta_l(\mathcal{A}) = \frac{N(\beta)\nu(B(\omega'_{l+2}))}{\nu(B(\omega'_{l+1}))} \geq \frac{c\beta^{-k}c_1(r_1\alpha(\alpha\beta)^{l+1})^k}{c_2(r_1\alpha(\alpha\beta)^{l})^k} \geq \frac{cc_1\alpha^k}{c_2}>0.
\end{equation*}
By Lemma \ref{treelike},
\begin{equation} \label{e:longexpression}
\begin{aligned}
\dim_H(\mathbf{A}_{\infty})
&\geq k-\limsup_{l \to \infty}\frac{\sum_{i=0}^{l}\log(\frac{1}{\Delta_i(\mathcal{A})})}{\log(\frac{1}{d_l(\mathcal{A})}}
\\ &=k-\frac{\log\frac{c_2}{cc_1\alpha^k}}{\log\frac{1}{\alpha\beta}}
\\ & \to k  \ \ \ \text{as\ } \beta \to 0.
\end{aligned}
\end{equation}
\end{proof}

\begin{remark}
The family $\mathcal{A}$ in the construction above depends on the choice of $\beta$, the choice for first move of Bob (in particular $\rho_0$), and choice of all the possible following moves of Alice and Bob.
\end{remark}

Now we combine the constructions appear in proofs of Lemma \ref{treelike} and Lemma \ref{schmidtgame}. Recall the definition of $\Pi(x_0)$ in \eqref{e:set}, and let $x\in \overline{B^W(x_0,\delta/2)}$.
\begin{proposition} \label{familymeasure}
For any $\epsilon >0$, any $x\in \overline{B^W(x_0,\delta/2)}$, there exist a sequence of measures $\{\mu_x^{(l)}\}_{l=0}^{\infty}$ and $\{\mu_x\}$ such that:
\begin{enumerate}
  \item $\mu_x^{(l)}\text{\ is supported on\ } B^u(x, \delta/2), \text{\ and\ }\mu_x \text{\ is supported on\ } B^u(x, \delta/2) \cap E_x(f,y);$
  \item $\mu_x \text{\ is the unique weak limit of\ } \mu_x^{(l)};$
  \item for any $z\in B^u(x, \delta/2)$, $r>0$ small enough,
\begin{equation}\label{e:frostman}
\mu_x (B^u(z, r)) \leq Cr^{u-\epsilon}.
\end{equation}
\end{enumerate}
\end{proposition}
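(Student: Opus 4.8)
The plan is to run, for each fixed $x \in \overline{B^W(x_0,\delta/2)}$, the same tree-like construction that appears in the proof of Lemma~\ref{schmidtgame}, but played on the unstable manifold $W^u(x)$ with target set $E_x(f,y)$, while keeping careful track of the combinatorial data furnished by Main Theorem~\ref{main}. Concretely, fix $\beta$ small enough that the exponent produced by Lemma~\ref{treelike} exceeds $u-\epsilon$; this is possible by the computation in \eqref{e:longexpression}. We know from Theorem~\ref{1dim} (resp.\ Theorem~\ref{conformal}) that $E_x(f,y)$ is $\alpha$-winning on $W^u(x)$, with $\alpha$ as in \eqref{e:winning}, so Alice has a winning strategy. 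Bob opens with a ball $B(\omega_1) \subset B^u(x,\delta/2)$ of radius $r_1 < \rho_0$, and we set $A_0 := B(\omega_1')$, the ball returned by Alice's winning strategy; by Lemma~\ref{powerlaw} applied on $W^u(x)$, $\nu(A_0) > 0$, giving \eqref{e:tree1}.

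For the inductive step I would, at each stage, enumerate a maximal family of $N(\beta)$ disjoint balls of radius $\beta r_l'$ inside each current Alice-ball (using the power law on $W^u(x)$, which guarantees $N(\beta) \ge c\beta^{-u}$), treat each as a legal Bob move, and let Alice respond to each via her winning strategy; the resulting Alice-balls form $\mathcal{A}_{l+1}$. Conditions \eqref{e:tree2}, \eqref{e:tree3} are immediate from disjointness and nesting, \eqref{e:tree4} holds because $d_l(\mathcal{A}) = r_1\alpha(\alpha\beta)^l \to 0$, and \eqref{e:tree5} follows exactly as in the proof of Lemma~\ref{schmidtgame}:
\begin{equation*}
\Delta_l(\mathcal{A}) = \frac{N(\beta)\,\nu(B(\omega'_{l+2}))}{\nu(B(\omega'_{l+1}))} \geq \frac{c\beta^{-u}c_1(r_1\alpha(\alpha\beta)^{l+1})^u}{c_2(r_1\alpha(\alpha\beta)^{l})^u} = \frac{cc_1\alpha^u}{c_2} > 0.
\end{equation*}
Since every ball in the tree was produced by Alice's $(\alpha,\beta;E_x(f,y))$-winning strategy, the limit set $\mathbf{A}_\infty$ consists of limit points of legitimate plays of the game, hence $\mathbf{A}_\infty \subset E_x(f,y)$, and of course $\mathbf{A}_\infty \subset B^u(x,\delta/2)$ by construction. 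Now Lemma~\ref{treelike}, with $m = \nu$ on $W^u(x)$ (Federer, power law with exponent $k = u$), produces the measures $\mu_x^{(l)} := \mu^{(l)}$ supported on $\mathbf{A}_l \subset B^u(x,\delta/2)$, their unique weak limit $\mu_x := \bar\mu$ supported on $\mathbf{A}_\infty \subset B^u(x,\delta/2)\cap E_x(f,y)$, and the Frostman-type bound $\mu_x(B^u(z,r)) \le Cr^{u - \epsilon'}$ where $\epsilon' = \log(c_2/(cc_1\alpha^u))/\log(1/(\alpha\beta))$; choosing $\beta$ small enough at the outset makes $\epsilon' \le \epsilon$, which yields \eqref{e:frostman}. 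This establishes items (1), (2), (3).

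The main obstacle is not any single estimate but the bookkeeping needed to make the construction \emph{uniform in $x$}, since $\mu_x$ must later serve as a conditional-measure family over $\overline{B^W(x_0,\delta/2)}$ and be assembled (in the next subsection) into a measure on $\Pi(x_0)\cap E(f,y)$. The constants $c_1,c_2$ in Lemma~\ref{powerlaw}, the Federer constant $D$, and $\rho_0$ were already arranged there to be independent of the base point on a given unstable leaf; the remaining point is to fix one value of $\beta$, one radius $r_1 = r_1(\delta)$, and a coherent rule for enumerating Bob's $N(\beta)$ sub-balls and reading off Alice's responses, so that the tree $\mathcal{A} = \mathcal{A}(x)$ — and hence the constant $C$ and exponent in \eqref{e:frostman} — can be taken the same for all $x$. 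Once this uniformity is in place the proof is a direct splice of the proofs of Lemma~\ref{treelike} and Lemma~\ref{schmidtgame}, with $W^u(x)$ in the role of the ambient manifold $X$ and $\nu|_{W^u(x)}$ in the role of $m$.
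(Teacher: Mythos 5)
Your proposal is correct and follows essentially the same route as the paper: choose $\beta$ small so that the exponent in \eqref{e:longexpression} exceeds $u-\epsilon$, run the tree-like construction of Lemma~\ref{schmidtgame} on $B^u(x,\delta/2)$ with target $E_x(f,y)$, and apply Lemma~\ref{treelike} to obtain $\mu_x^{(l)}$, $\mu_x$, and the bound \eqref{e:frostman}. Your closing paragraph on uniformity in $x$ is a legitimate concern, but the paper defers it to Proposition~\ref{measurable1} (measurability of $x\mapsto\mu_x$) rather than folding it into this statement.
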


\begin{proof}
For any $\epsilon >0$, we can choose $\beta$ small enough as in proof of Lemma \ref{schmidtgame}, such that the second expression in \eqref{e:longexpression} is greater than $u-\epsilon$ (now $k=u$). For each $x\in\overline{ B^W(x_0,\delta/2)}$, we can construct a family $\mathcal{A}_x=\{\mathcal{A}_{x}^{(l)}\}_{l=0}^{\infty}$ of collections of subsets of $B^u(x, \delta/2)$ with limit set $\mathbf{A}_x^{\infty} \subset B^u(x, \delta/2) \cap E_x(f,y)$ as in the proof of Lemma \ref{schmidtgame}. Follow the proof of lemma \ref{treelike}, we can construct $\mu_x^{(l)}$ supported on $\mathbf{A}_{x}^{(l)}$ and $\mu_x$ supported on $B^u(x, \delta/2) \cap E_x(f,y)$, such that $\mu_x \text{\ is the unique weak limit of\ } \mu_x^{(l)}$. Moreover \eqref{e:frostman} follows from (2) in Lemma \ref{treelike}.
\end{proof}

\begin{remark}
Since the construction of $\mu_x^{(l)}$ and $\mu_x$ in Proposition \ref{familymeasure} relies on the construction of $\mathcal{A}_{x}^{(l)}$, they are not unique.
\end{remark}

\subsection{Measurability of $\{\mu_x\}$}

Recall that the choice for $\mu_x^{(l)}$ and $\mu_x$ are not unique for any $x\in \overline{B^W(x_0, \delta/2)}$. We shall show that there exists a choice for each $\mu_x^{(l)}$ and $\mu_x$ such that $x\mapsto \mu_x^{(l)}$ and $x\mapsto \mu_x$ are measurable with respect to the volume measure $\nu$ on $W$. So we need to specify the choice of the families $\mathcal{A}_x^{(l)}$ for any $l$ and any $x\in \overline{B^W(x_0, \delta/2)}$.

\begin{proposition}\label{measurable1}
There exist a sequence of finite partitions $\mathcal{P}^{(l)}$ of $\overline{B^W(x_0, \delta/2)}$, a family $\mathcal{A}_x=\{\mathcal{A}_{x}^{(l)}\}_{l=0}^{\infty}$ of collections of subsets of $B^u(x, \delta/2)$ and a family of measures $\{\mu_x^{(l)}\}$ such that:
\begin{enumerate}
  \item for each element $P_i^{(l)} \in \mathcal{P}^{(l)}$, the interior of $P_i^{(l)}$ denoted as $\text{Int}(P_i^{(l)})$, is an open and connected set, and $\nu(\partial P_i^{(l)})=0$;
  \item $\mathcal{P}^{(l)} \leq \mathcal{P}^{(l+1)}$;
  \item for each $x\in \overline{B^W(x_0, \delta/2)}$, each $l \in \mathbb{N}_0$, $\mu_x^{(l)}$ is supported on $\mathbf{A}_x^{(l)}$, and obtained as in Proposition \ref{familymeasure}. Moreover, $\bigcup_{x\in \text{Int}(P_i^{(l)})} \mathbf{A}_x^{(l)}$ is a union of finitely many open and connected sets.
  \item For any $l\in \mathbb{N}_0$, $x \mapsto \mu_x^{(l)}$ is continuous on each $\text{Int}(P_i^{(l)})$, hence it is measurable with respect to the volume measure $\nu$ on $\overline{B^W(x_0, \delta/2)}$.
\end{enumerate}
\end{proposition}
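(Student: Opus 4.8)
The plan is to construct the partitions $\mathcal{P}^{(l)}$ and the families $\{\mathcal{A}_x^{(l)}\}$, $\{\mu_x^{(l)}\}$ simultaneously by induction on $l$, arranging at each stage that all the choices made in the Schmidt--game run underlying Lemma \ref{schmidtgame} depend continuously on the leaf parameter $x$ on the interior of every cell of $\mathcal{P}^{(l)}$. The geometric input is the continuous dependence of local unstable manifolds on the base point: it is standard that $W^u_{\mathrm{loc}}(x)$ varies continuously in the $C^1$ topology with $x$, so there is a continuous family of $C^1$ parametrizations $\phi_x$ from a $\delta$-ball in $E^u_{x_0}$ onto a neighbourhood of $x$ in $W^u(x)$ covering $B^u(x,\delta/2)$, with $\phi_x(0)=x$. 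Composing with $f^{-k}$, each obstacle set $I_k(c)$ (a connected component of $f^{-k}(\Pi(c))\cap W^u(x)$) has endpoints, hence a length $\|I_k\|$ — and, in the conformal case, a major and a minor radius $R(I_k),r(I_k)$ — depending continuously on $x$. Consequently the only obstruction to a globally continuous choice of Alice's moves is combinatorial: the collection of obstacles that are relevant at a given scale and meet a given region changes only as $x$ crosses level sets of finitely many of these continuous functions, and the partition $\mathcal{P}^{(l)}$ is built precisely by cutting $\overline{B^W(x_0,\delta/2)}$ along such level sets and passing to connected components.

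For $l=0$ take $\mathcal{P}^{(0)}=\{\overline{B^W(x_0,\delta/2)}\}$; let Bob's first move be $B^u(x,\delta/2)$ and, exactly as in the $j=0$ step of the proof of Theorem \ref{1dim} (where by \eqref{e:j=0} no $I_k$ needs to be avoided), let Alice's first counter-ball be the concentric ball $A_0^x:=B^u(x,\alpha\delta/2)$, and set $\mu_x^{(0)}:=\nu|_{A_0^x}$; all of this is continuous in $x$. Assume now $\mathcal{P}^{(l)}$ and continuous families $\{\mathcal{A}_x^{(\le l)}\}$, $\{\mu_x^{(\le l)}\}$ on the interiors of its cells are given, with $\mathcal{A}_x^{(l)}$ consisting of a fixed number $m_l=N(\beta)^l$ of balls whose centres and radii are continuous in $x$. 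Passing to level $l+1$ reproduces one further move of the game: Bob's admissible moves are a fixed choice of $N(\beta)$ disjoint sub-balls of each level-$l$ ball, taken by a pattern fixed in the parametrization hence continuous in $x$; and for each such sub-ball Alice follows the winning strategy of Theorem \ref{1dim} (resp. \ref{conformal}), which prescribes, via one application of Lemma \ref{Ne}, a ball inside the parent that avoids the at most $N$ obstacles $I_k$ relevant at the current scale and meeting the parent ball. First refine each cell along the finitely many level sets of the relevance conditions \eqref{e: mainequality}, \eqref{e: intersection} (resp. \eqref{e: mainequalitycon}); the scale thresholds there may be perturbed within the slack left in the proof of Theorem \ref{1dim}, and since for any continuous $g$ on $\overline{B^W(x_0,\delta/2)}$ the level sets $\{g=t\}$ are pairwise disjoint, only countably many $t$ have $\nu(\{g=t\})>0$, so a single co-countable set of threshold choices makes all the resulting cell boundaries $\nu$-null. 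On each refined cell the relevant obstacle set, and hence the set of admissible centres for Alice's ball — the intersection of the ball of radius $(1-\alpha)\rho$ with the complement of the balls $B(y_i,2\alpha\rho)$ of Lemma \ref{Ne} — is a nonempty open set varying continuously with $x$, which by the estimate \eqref{e:estimate} contains a ball of radius comparable to $\rho$; subdividing the cell finitely once more we make a continuous selection of a centre on the compact closure. This defines $\mathcal{A}_x^{(l+1)}$, and then $\mu_x^{(l+1)}$ through the rescaling formula \eqref{rescale} with $m=\nu$, which — involving only volumes of level-$l$ and level-$(l+1)$ balls — is continuous in $x$ on each new cell. Let $\mathcal{P}^{(l+1)}$ be the common refinement of $\mathcal{P}^{(l)}$ with all these subdivisions, split into connected components; then $\mathcal{P}^{(l)}\le\mathcal{P}^{(l+1)}$.

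The listed properties are now read off. Items (1) and (2) hold by construction. For (3): $\mu_x^{(l)}$ is supported on $\mathbf{A}_x^{(l)}$ by \eqref{rescale}; it is one of the measures produced by Proposition \ref{familymeasure}, since we have simply carried out that construction along a specific admissible run of the game; and $\bigcup_{x\in \mathrm{Int}(P_i^{(l)})}\mathbf{A}_x^{(l)}$ is a finite union of open connected subsets of $\Pi(x_0)$ — indexing the $m_l$ level-$l$ balls as $B_b(x)\subset W^u(x)$ with continuous centres and radii over the connected open set $\mathrm{Int}(P_i^{(l)})$, each $\bigcup_x B_b(x)$ is open (continuity of the balls and of the foliation) and connected (connected balls over a connected base with a continuously moving centre), so the union has at most $m_l$ components. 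Finally (4) is the continuity obtained in the inductive step, and a function continuous on the interior of every cell of a finite partition whose cell boundaries are $\nu$-null is $\nu$-measurable; hence $x\mapsto \mu_x^{(l)}$, and as a weak limit also $x\mapsto\mu_x$, is $\nu$-measurable.

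The main obstacle is the second step: establishing the continuous dependence of the obstacles $I_k(c)$ on the leaf — which needs care near the boundary sphere of $B^u(x,\delta/2)$, where components of $f^{-k}(\Pi(c))\cap W^u(x)$ may appear, disappear or merge as $x$ varies — and then organizing the successive refinements so that the combinatorial type of the relevant obstacle set is constant on each cell while the cell boundaries stay $\nu$-null; the slack in the threshold constants of Theorem \ref{1dim}/\ref{conformal} and the freedom to refine finitely at every stage are exactly what make this possible. A minor additional point is upgrading the evident lower semicontinuity of the admissible-move sets to a genuinely continuous selection, again absorbed into the finite subdivision of each cell.
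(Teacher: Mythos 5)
Your proposal is correct and follows essentially the same route as the paper: induct on $l$, use the scale condition to bound the relevant $k$'s by some $K(j)$ so that only finitely many obstacle families matter at each stage, cut $\overline{B^W(x_0,\delta/2)}$ along the loci where the relevant-obstacle combinatorics changes (the paper uses the $\pi$-projections of $\partial\bigl(f^{-k}(\Pi(c))\cap\Pi(x_0)\bigr)$, you use level sets of the corresponding continuous functions), and then choose Bob's and Alice's moves continuously in $x$ on each cell, deducing measurability from cellwise continuity plus $\nu$-null cell boundaries. Your extra care about making the thresholds' level sets $\nu$-null and about upgrading lower semicontinuity of the admissible-move set to a continuous selection addresses points the paper treats more briefly, but the argument is the same.
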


\begin{proof}
We prove it by induction on $l$. At first consider $l=0$. For each $x\in \overline{B^W(x_0, \delta/2)}$ let Bob choose a ball $B^u_x(\omega_1)$ satisfying \eqref{e:firstmove} (where $\rho_0$ is as in the proof of Lemma \ref{schmidtgame}) such that $\bigcup_{x\in \overline{B^W(x_0, \delta/2)}}B^u_x(\omega_1)$ is open and connected. Since there is no $I_x^{(k)}$, the $k$th preimages of $\Pi(c)$, for Alice to avoid at the first step, Alice can choose $B^u_x(\omega'_1) \subset B^u_x(\omega_1)$ such that $\bigcup_{x\in \overline{B^W(x_0, \delta/2)}}B^u_x(\omega'_1)$ is open and connected. Set $\mathbf{A}_x^{(0)}=B^u_x(\omega'_1)$, and $\mu_x^{(0)}=\nu|_{B^u_x(\omega'_1)}$. Hence $\mathcal{P}^{(0)}=\{\overline{B^W(x_0, \delta/2)}, \emptyset\}$, and $x \mapsto\mu_x^{(0)}$ is obviously continuous on $B^W(x_0, \delta/2)$.

Suppose the conclusion is true for $0, \cdots, l-1$, and now we prove it for $l$. Since $\mathcal{A}_x^{(l)}$ will be constructed in the $(l+1)$th turn of Schmidt games, let us suppose $jr < l+1 \leq (j+1)r$, i.e., we are in the $j$th step in the proof of Theorem \ref{conformal}. In $j$th step, Alice only needs to avoid some of the $I_k$'s with $R(I_k) \geq \alpha \rho(\alpha\beta)^{(j+2)r-1}$. Since $R(I_k) \leq \frac{c}{\sigma_1^k}$, we have
\begin{equation*}
\alpha \rho(\alpha\beta)^{(j+2)r-1} \leq \frac{c}{\sigma_1^k}.
\end{equation*}
Hence $k \leq \frac{\log \frac{c}{\alpha\rho(\alpha\beta)^{(j+2)r-1}}}{\log \sigma_1}:=K(j)$, that is, $k$ is bounded above when $j$ is fixed. Let $\pi: \Pi(x_0)\rightarrow \overline{B^W(x_0, \delta/2)}$ be the natural map such that $\pi(z)$ is the unique point in $B^u(z,\delta/2) \cap \overline{B^W(x_0, \delta/2)}$. Consider the projection to $\overline{B^W(x_0, \delta/2)}$ under $\pi$ of the boundaries of all these $k$th preimages of $\Pi(c)$, i.e. $\bigcup_{k \leq K(j)} \partial \left(\pi \left(f^{-k}(\Pi(c)) \cap \Pi(x_0)\right)\right)$. Let $\mathcal{Q}^{(l)}$ be the partition of $\overline{B^W(x_0, \delta/2)}$ such that the boundaries of its elements are $\bigcup_{k \leq K(j)} \partial \left(\pi \left(f^{-k}(\Pi(c)) \cap \Pi(x_0)\right)\right)$. Since the number of $k$'s is finite, the partition $\mathcal{Q}^{(l)}$ is finite and $\mathcal{Q}^{(l-1)} \leq \mathcal{Q}^{(l)}$. Set $\mathcal{P}^{(l)}=\mathcal{P}^{(l-1)} \vee \mathcal{Q}^{(l)}$. We proved (1) and (2), and in fact $\mathcal{P}^{(l)}$'s are same for all $l$ in $j$th step, i.e., with $jr < l+1 \leq (j+1)r$.

We prove (3) and (4) now. Consider $P_i^{(l)} \in \mathcal{P}^{(l)}$. Since $\mathcal{P}^{(l-1)} \leq \mathcal{P}^{(l)}$, by the induction $\bigcup_{x\in P_i^{(l)}} \mathbf{A}_x^{(l-1)}$ is a finite union of open and connected sets. So inside each of Alice's balls in $l$th turn, we can let Bob choose the $N(\beta)$ balls $B^u_x(\omega_{l+1})$ at $(l+1)$th turn such that each $\bigcup_{x \in P_i^{(l)}}B^u_x(\omega_{l+1})$ is open and connected. Then by the definition of $\mathcal{P}^{(l)}$, $f^{-k}(\Pi(c)) \bigcap \bigcup_{x\in P_i^{(l)}}B^u_x(\omega_{l+1})$ is a finite union of open and connected sets for each $k\leq K(j)$. Note that in the $(l+1)$th turn, it is enough for Alice to avoid some of $I_k$'s with $k \leq K(j)$, and hence $\bigcup_{x\in P_i^{(l)}}\left(I_x^{(k)}\cap B^u_x(\omega_{l+1})\right)$ is open and connected for each of such $k$. By avoiding the above open and connected sets, Alice can choose balls $B^u_x(\omega'_{l+1})$ such that $\bigcup_{x\in P_i^{(l)}}B^u_x(\omega'_{l+1})$ is open and connected. Alice can do so because of the continuity of the unstable foliation and we can let $\delta$ be small enough in the definition of $\Pi(x_0)$. We proved (3). (4) is immediate from (3) and the fact that $\mu_x^{(l)}$ is obtained by a rescaling of the volume measure in each of the $l$ steps according to \eqref{rescale}.
\end{proof}

\begin{proposition}\label{measurable}
Fix an arbitrary small $\epsilon >0$. For each $x\in \overline{B^W(x_0, \delta/2)}$, there exists a measure $\mu_x$ supported on $B^u(x, \delta/2) \cap E_x(f,y)$, such that
\begin{enumerate}
\item For any $z\in B^u(x, \delta/2)$, $\mu_x (B^u(z, r)) \leq Cr^{u-\epsilon}.$
\item $x\mapsto \mu_x$ is measurable with respect to the volume measure $\nu$ on $\overline{B^W(x_0, \delta/2)}.$
  \end{enumerate}
\end{proposition}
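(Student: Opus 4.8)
The plan is to construct $\mu_x$ as the weak limit of the sequence $\mu_x^{(l)}$ produced in Proposition \ref{measurable1}, and to deduce its measurability from the measurability of the approximants. First I would invoke Proposition \ref{measurable1} to fix, once and for all, a choice of the partitions $\mathcal{P}^{(l)}$, the tree-like families $\mathcal{A}_x = \{\mathcal{A}_x^{(l)}\}$, and the measures $\mu_x^{(l)}$ with the stated continuity on each cell $\mathrm{Int}(P_i^{(l)})$. For each fixed $x$, the family $\mathcal{A}_x$ satisfies \eqref{e:tree1}--\eqref{e:tree5} with $m = \nu$ restricted to $B^u(x,\delta/2)$ and with $k = u$ (by Lemma \ref{powerlaw}, $\nu$ satisfies a power law on $W^u(x)$, so the hypothesis $m(B(z,r)) \le Dr^u$ of Lemma \ref{treelike} holds with a constant $D$ uniform in $x$, since $\rho_0$ in Lemma \ref{powerlaw} is independent of the base point). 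Hence Lemma \ref{treelike}(1),(2) gives, for each $x$, a unique weak limit $\mu_x = \lim_l \mu_x^{(l)}$ supported on $\mathbf{A}_x^\infty \subset B^u(x,\delta/2) \cap E_x(f,y)$, and the bound $\mu_x(B^u(z,r)) \le Cr^{u-\epsilon}$, which is exactly conclusion (1); here I must also check, as in Proposition \ref{familymeasure}, that $\beta$ was chosen small enough (depending only on $\epsilon$, not on $x$) so that the exponent coming from \eqref{e:longexpression} exceeds $u - \epsilon$, and that the constant $C$ is uniform in $x$, which follows because all the relevant constants ($c_1, c_2, D, \rho_0$, $N(\beta)$, $\Delta_l$) in the proofs of Lemma \ref{schmidtgame} and Lemma \ref{treelike} are uniform in the base point.

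The substance of the proposition is conclusion (2). The strategy is: measurability of $x \mapsto \mu_x$ follows from measurability of each $x \mapsto \mu_x^{(l)}$ together with weak convergence. Concretely, for a fixed bounded continuous test function $\varphi$ on $\Pi(x_0)$ (or on a fixed ambient chart containing all the $B^u(x,\delta/2)$), the map $x \mapsto \int \varphi \, d\mu_x^{(l)}$ is, by Proposition \ref{measurable1}(4), continuous on each $\mathrm{Int}(P_i^{(l)})$ and hence Borel measurable on $\overline{B^W(x_0,\delta/2)}$ (the cells $\mathrm{Int}(P_i^{(l)})$ cover $\overline{B^W(x_0,\delta/2)}$ up to the $\nu$-null set $\bigcup_i \partial P_i^{(l)}$, by Proposition \ref{measurable1}(1)). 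Since $\mu_x^{(l)} \to \mu_x$ weakly, we have $\int \varphi\, d\mu_x = \lim_{l\to\infty} \int \varphi\, d\mu_x^{(l)}$ for every $x$, so $x \mapsto \int \varphi\, d\mu_x$ is a pointwise limit of Borel measurable functions and therefore Borel measurable. As this holds for every continuous $\varphi$ (and the total masses $\mu_x(\Pi(x_0)) = \mu_x^{(l)}(\Pi(x_0))$ are uniformly bounded, indeed eventually constant in $l$), the assignment $x \mapsto \mu_x$ into the space of finite Borel measures with the weak-$*$ topology is Borel measurable; by standard facts this is equivalent to $x \mapsto \mu_x(A)$ being measurable for every Borel set $A$, which is the meaning of measurability of the family $\{\mu_x\}$ needed in the sequel.

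I expect the main technical obstacle to be bookkeeping rather than a genuine difficulty: namely, making sure that the single global choice of data (partitions, counter-moves) from Proposition \ref{measurable1} is simultaneously (a) measurable as claimed and (b) such that the per-$x$ family $\mathcal{A}_x$ still verifies \emph{all} of \eqref{e:tree1}--\eqref{e:tree5} with constants uniform in $x$ — in particular that $\Delta_l(\mathcal{A}_x) \ge cc_1\alpha^u/c_2 > 0$ uniformly, so that the exponent defect $\epsilon' := \log(c_2/(cc_1\alpha^u))/\log(1/(\alpha\beta))$ is the same for all $x$ and can be made $< \epsilon$ by one choice of $\beta$. A secondary point worth spelling out is that weak convergence of $\mu_x^{(l)}$ preserves the Frostman-type bound in the limit: this is conclusion (2) of Lemma \ref{treelike}, which is already established there, so I would simply cite it rather than reprove it. Once these uniformities are recorded, conclusion (1) is immediate from Lemma \ref{treelike}(2) and conclusion (2) is the measurability argument above.

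\begin{proof}
Fix $\epsilon > 0$ and, as in the proof of Proposition \ref{familymeasure}, choose $\beta > 0$ small enough (depending only on $\epsilon$) so that the second expression in \eqref{e:longexpression}, with $k = u$, exceeds $u - \epsilon$; this is possible since by Lemma \ref{powerlaw} the constants $c_1, c_2$ and the radius $\rho_0$ may be taken uniform in the base point. Apply Proposition \ref{measurable1} with this $\beta$ to obtain the partitions $\mathcal{P}^{(l)}$, the families $\mathcal{A}_x = \{\mathcal{A}_x^{(l)}\}_{l\ge 0}$ of subsets of $B^u(x,\delta/2)$, and the measures $\mu_x^{(l)}$ supported on $\mathbf{A}_x^{(l)}$ with $x \mapsto \mu_x^{(l)}$ continuous on each $\mathrm{Int}(P_i^{(l)})$.

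For each fixed $x\in \overline{B^W(x_0,\delta/2)}$, the family $\mathcal{A}_x$ satisfies \eqref{e:tree1}--\eqref{e:tree5} with $m = \nu|_{B^u(x,\delta/2)}$; by Lemma \ref{powerlaw} the hypothesis $m(B(z,r)) \le Dr^u$ of Lemma \ref{treelike} holds with $D$ independent of $x$, and $\Delta_l(\mathcal{A}_x) \ge cc_1\alpha^u/c_2 > 0$ uniformly in $x$ and $l$ exactly as in the proof of Lemma \ref{schmidtgame}. Hence Lemma \ref{treelike}(1),(2) provides a unique weak limit $\mu_x := \lim_{l\to\infty}\mu_x^{(l)}$, supported on $\mathbf{A}_x^\infty \subset B^u(x,\delta/2) \cap E_x(f,y)$, satisfying $\mu_x(B^u(z,r)) \le Cr^{u-\epsilon}$ for all $z\in B^u(x,\delta/2)$, with $C$ uniform in $x$ since all the constants entering \eqref{e:estimate2} are. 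This proves (1).

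For (2), fix a bounded continuous function $\varphi$ on $\Pi(x_0)$. For each $l$ the map $x \mapsto \int\varphi\,d\mu_x^{(l)}$ is continuous on each $\mathrm{Int}(P_i^{(l)})$ by Proposition \ref{measurable1}(4), and the cells $\mathrm{Int}(P_i^{(l)})$ exhaust $\overline{B^W(x_0,\delta/2)}$ off the $\nu$-null set $\bigcup_i \partial P_i^{(l)}$ by Proposition \ref{measurable1}(1); thus $x \mapsto \int\varphi\,d\mu_x^{(l)}$ is $\nu$-measurable. Since $\mu_x^{(l)} \to \mu_x$ weakly for every $x$, we have $\int\varphi\,d\mu_x = \lim_{l\to\infty}\int\varphi\,d\mu_x^{(l)}$, so $x \mapsto \int\varphi\,d\mu_x$ is a pointwise limit of $\nu$-measurable functions and hence $\nu$-measurable. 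As this holds for all continuous $\varphi$ and the masses $\mu_x(\Pi(x_0))$ are uniformly bounded, $x \mapsto \mu_x$ is measurable with respect to $\nu$ on $\overline{B^W(x_0,\delta/2)}$.
\end{proof}
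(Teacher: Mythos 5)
Your proposal is correct and follows the same route as the paper, whose own proof is just a two-line version of this: take $\mu_x$ to be the unique weak limit of the $\mu_x^{(l)}$ from Proposition \ref{measurable1}, note that (1) is the Frostman bound already established in Proposition \ref{familymeasure}, and pass measurability from $x\mapsto\mu_x^{(l)}$ to the limit. Your write-up merely fills in the details the paper leaves implicit (uniformity of the constants in $x$ and the test-function argument for measurability of the weak limit), which are all sound.
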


\begin{proof}
As in the proof of Proposition \ref{familymeasure}, we take $\mu_x$ as the unique weak limit of $\mu_x^{(l)}$ in Proposition \ref{measurable1}, and (1) is exactly \eqref{e:frostman}. Since $x \mapsto\mu_x^{(l)}$ is measurable, so is $x\mapsto\mu_x$, we get (2).
\end{proof}

\subsection{Measure $\mu$}
Now we define measure $\mu$ supported on $\Pi(x_0)\cap E(f,y)$.
\begin{definition} \label{measure}
For any Borel set $A \subset M$, define
\begin{equation*}
\mu(A):= \int_{\overline{B^W(x_0, \delta/2)}}\int_{B^u(x, \delta/2)} \chi_A(x,z)d\mu_x(z) d\nu(x).
\end{equation*}
where $\chi_A$ is the characteristic function of $A$, $\nu$ is the volume measure on $\overline{B^W(x_0, \delta/2)}$, and $\mu_x$ is as in Proposition \ref{measurable}. $\mu$ is well defined since we have that $x \mapsto \mu_x$ is measurable with respect to $\nu$.
\end{definition}

Fix $0 <\tau <1$. For any $z \in \Pi(x_0)$, define
\begin{equation*}
B_l(z):=B(z, \tau^l).
\end{equation*}
and
\begin{equation*}
C_l(z):=\bigcup_{u \in B^W (z, \tau^l)}B^u(u, \tau^l).
\end{equation*}

\begin{lemma}
There exists a $l_0 >0$ such that for any $l \geq l_0$, any $z\in \Pi$ we have
\begin{equation*}
 C_{l}(z) \subset B_{l-l_0}(z) \text{\ \ and\ \ } B_{l}(z) \subset C_{l-l_0}(z).
\end{equation*}
\end{lemma}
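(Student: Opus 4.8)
The plan is to unwind the definitions and reduce everything to a comparison between two Riemannian metrics: the ambient metric on $M$ restricted to $\Pi(x_0)$, and the ``box'' metric induced by the product-like structure, namely $d_{\text{box}}(z_1,z_2) = d^W(\pi(z_1),\pi(z_2)) + d^u_{\text{along leaves}}(z_1,z_2)$ where $\pi$ is the projection to $\overline{B^W(x_0,\delta/2)}$. The set $C_l(z)$ is exactly the box-metric ball of radius $\tau^l$ around $z$ (up to the obvious constant coming from summing two terms versus taking a max), while $B_l(z)$ is the ambient ball of radius $\tau^l$. So the lemma is equivalent to the statement that these two metrics are bi-Lipschitz equivalent on the compact rectangle $\Pi(x_0)$, with a bi-Lipschitz constant that can be absorbed into a fixed number $l_0$ of scales once we choose $\tau^{l_0}$ smaller than that constant.

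First I would fix the rectangle $\Pi(x_0)$ and note that it is a compact subset of $M$; the unstable foliation $W^u$ restricted to it, and the transversal foliation $W$, are continuous foliations with uniformly $C^{1+\alpha}$ (hence uniformly Lipschitz) leaves, and the leaves of $W^u$ depend continuously on the base point. Continuity of the foliations plus compactness gives uniform control: there is a constant $Q \geq 1$, independent of $z \in \Pi(x_0)$, such that for all sufficiently small radii $\rho$, the ambient ball $B(z,\rho)$ is contained in $C(z, Q\rho) := \bigcup_{u\in B^W(z,Q\rho)}B^u(u,Q\rho)$ and conversely $C(z,\rho) \subset B(z, Q\rho)$. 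The forward containment comes from the fact that any path of length $<\rho$ inside $\Pi(x_0)$ can be followed by a holonomy-respecting path that moves first along the transversal and then along the unstable leaf, with total length at most $Q\rho$ by absolute continuity and uniform transversality; the reverse containment is easier, since a box $C(z,\rho)$ has diameter at most $2Q\rho$ in the ambient metric simply because both foliations have Lipschitz leaves and the leaf directions are uniformly bounded away from each other (the angle between $E^u$ and $TW$ is bounded below on the compact set).

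Given such a $Q$, I would choose $l_0 = l_0(\tau, Q)$ to be the least integer with $\tau^{-l_0} \geq Q$, equivalently $Q\tau^{l} \leq \tau^{l-l_0}$ for all $l$. Then for $l \geq l_0$ and any $z \in \Pi(x_0)$ we get $C_l(z) = C(z,\tau^l) \subset B(z, Q\tau^l) \subset B(z, \tau^{l-l_0}) = B_{l-l_0}(z)$, and symmetrically $B_l(z) = B(z,\tau^l) \subset C(z, Q\tau^l) \subset C(z,\tau^{l-l_0}) = C_{l-l_0}(z)$, which is exactly the claim; the ``sufficiently small'' hypothesis on radii is arranged by shrinking $\delta$ in the definition of $\Pi(x_0)$ if necessary, or equivalently by enlarging $l_0$ so that $\tau^{l_0-l_0}=\tau^0$... rather, by only asserting the conclusion for $l \geq l_0$ with $\tau^{l_0}$ below the threshold where the foliation estimates kick in.

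The main obstacle is the first containment $B(z,\rho) \subset C(z,Q\rho)$, i.e., controlling the ``shape'' of a genuine ambient ball inside the product-like box. This is where the geometry of the partially hyperbolic splitting enters: one needs that moving a small ambient distance from $z$ can be decomposed into a uniformly bounded amount of motion along $W^u$ followed by a uniformly bounded amount of motion along $W$ (or along the transversal $W$ first). This decomposition is precisely the local product structure / continuity of the holonomy maps for the unstable foliation, combined with the fact that on the compact rectangle the angle between the unstable direction $E^u$ and the tangent space $TW$ of the transversal is bounded below by a positive constant. I would cite the continuity of the foliations $W^u$ and $W^s$ (hence of $W$) and their absolute continuity, already invoked in Section 2, and deduce $Q$ from a standard compactness argument: the function assigning to $z \in \Pi(x_0)$ the optimal local comparison constant is upper semicontinuous and finite, hence bounded on the compact set $\Pi(x_0)$.
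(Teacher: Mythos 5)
Your argument is correct and takes essentially the same route as the paper: both inclusions reduce to comparing $d(z,w)$ with $d^W(z,u)+d^u(u,w)$ for the intersection point $u\in W(z)\cap W^u(w)$, using the triangle inequality for $C_l(z)\subset B_{l-l_0}(z)$ and the uniform lower bound on the angle between $W^u$ and $W$ on the compact rectangle (together with comparability of intrinsic leaf distance and ambient distance at small scales) for $B_l(z)\subset C_{l-l_0}(z)$, and then choosing $l_0$ so that $\tau^{-l_0}$ dominates the resulting constant. One cosmetic remark: the invocation of absolute continuity of the holonomy is superfluous here --- it is the transversality bound, not absolute continuity, that controls the decomposition --- but this does not affect the validity of your proof.
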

\begin{proof}
Suppose $w\in C_l(z)$ for some $l$ large enough. Let $u \in W(z) \cap W^u(w)$. Then
\begin{equation*}
d(z, w) \leq d(z, u)+d(u, w) \leq d^W(z, u)+d^u(u, w) \leq 2\tau^{l} \leq \tau^{l-l_0},
\end{equation*}
if we pick some $l_0$ with $2\leq \tau^{-l_0}$. Hence $w\in B_{l-l_0}(z)$. We get the first inclusion.

For the second inclusion, suppose $w\in B_l(z)$ and let $u \in W(z) \cap W^u(w)$. Since the angles between $W^u$ and $W$ are uniformly bounded away from zero, there exists a $C_0>0$ such that
\begin{equation*}
d(z,u) \leq C_0d(z,w), \text{\ and\ \ } d(u,w) \leq C_0d(z,w).
\end{equation*}
It is not hard to prove that for $\delta$ small enough, there exists a constant $C_1>0$ such that $d^u(u,w) \leq C_1d(u,w)$ and $d^W(z,u) \leq C_1d(z,u)$. Thus $d^u(u,w) \leq C_1C_0\tau^l \leq \tau^{l-l_0}$ and $d^W(z,u) \leq C_1C_0\tau^l \leq \tau^{l-l_0}$ for some $l_0$ with $C_1C_0 \leq \tau^{-l_0}$. Therefore, $w\in C_{l-l_0}(z)$, hence $B_{l}(z) \subset C_{l-l_0}(z)$.
\end{proof}

\begin{lemma}\label{frostman2}
For any $r>0$ small enough, any $z\in \Pi(x_0)$, one has
\begin{equation*}
\mu(B(z,r)) \leq D r^{n-\epsilon}
\end{equation*}
for some constant $0<D<\infty$.
\end{lemma}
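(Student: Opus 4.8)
The plan is to bound $\mu(B(z,r))$ by replacing the round ball with the product-like box $C_{l'}(z)$ of comparable scale, and then to exploit the Fubini-type formula in Definition \ref{measure} so as to factor the estimate into a transversal part (controlled by the volume measure $\nu$ on $\overline{B^W(x_0,\delta/2)}$, which is $(n-u)$-dimensional) and an unstable part (controlled by the conditional measures $\mu_x$ through Proposition \ref{measurable}(1)). Concretely, given a small $r>0$ I would first choose $l\in\mathbb{N}$ with $\tau^{l+1}\le r<\tau^{l}$, so that $B(z,r)\subseteq B(z,\tau^{l})=B_{l}(z)$, and then apply the preceding lemma on the inclusions $B_l(z)\subset C_{l-l_0}(z)$, valid once $r$ is small enough that $l\ge l_0$. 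Writing $l':=l-l_0$ and noting that $\mu$ is supported on $\Pi(x_0)$, it then suffices to bound $\mu(C_{l'}(z))$.

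The heart of the matter is the identity $\mu(C_{l'}(z))=\int_{\overline{B^W(x_0,\delta/2)}}\mu_{x}\bigl(C_{l'}(z)\cap B^{u}(x,\delta/2)\bigr)\,d\nu(x)$, together with the observation that the $x$-fiber $C_{l'}(z)\cap B^{u}(x,\delta/2)$ is either empty or contained in a single unstable ball of radius $\tau^{l'}$. Indeed, if $w$ lies in this fiber then $\pi(w)=x$ and $w\in B^{u}(v,\tau^{l'})$ for some $v\in B^{W}(z,\tau^{l'})$; since $W^{u}(w)=W^{u}(x)$, the point $v$ must be the unique intersection $v(x):=W(z)\cap W^{u}(x)$ in the small chart, so the fiber is nonempty only when $v(x)$ is defined and lies in $B^{W}(z,\tau^{l'})$, and in that case it sits inside $B^{u}(v(x),\tau^{l'})\subset W^{u}(x)$. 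By Proposition \ref{measurable}(1) — enlarging the radius by a factor $2$ so as to center the ball at a point of $B^{u}(x,\delta/2)$, or discarding the fiber if $B^{u}(v(x),\tau^{l'})$ misses $B^{u}(x,\delta/2)$ entirely — this yields $\mu_{x}\bigl(C_{l'}(z)\cap B^{u}(x,\delta/2)\bigr)\le C(2\tau^{l'})^{u-\epsilon}$.

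Next I would estimate the $\nu$-measure of the admissible parameters $x$. Since $\pi(v(x))=W^{u}(x)\cap W(x_0)=x$, nonemptiness of the fiber forces $x\in\pi\bigl(B^{W}(z,\tau^{l'})\bigr)$, where $\pi$ restricted to $W(z)$ is the unstable holonomy onto $W(x_0)$. Because the angle between $W^{u}$ and $W$ is uniformly bounded away from zero and $\delta$ is small, this holonomy distorts distances by at most a fixed factor $Q$ — the same elementary comparison already used in the previous lemma — so $\pi\bigl(B^{W}(z,\tau^{l'})\bigr)\subseteq B^{W}(\pi(z),Q\tau^{l'})$. The volume measure on the $(n-u)$-dimensional transversal $W(x_0)$ obeys a power law exactly as in Lemma \ref{powerlaw}, hence $\nu\bigl(B^{W}(\pi(z),Q\tau^{l'})\bigr)\le c_{2}(Q\tau^{l'})^{n-u}$. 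Multiplying the two factors gives $\mu(C_{l'}(z))\le C(2\tau^{l'})^{u-\epsilon}\,c_{2}(Q\tau^{l'})^{n-u}=C'(\tau^{l'})^{n-\epsilon}$, and since $\tau^{l'}=\tau^{-l_0}\tau^{l}\le\tau^{-l_0-1}r$, I conclude $\mu(B(z,r))\le\mu(C_{l'}(z))\le C'(\tau^{-l_0-1}r)^{n-\epsilon}=Dr^{n-\epsilon}$.

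I expect the only genuinely delicate point to be the holonomy distortion bound $\pi\bigl(B^{W}(z,\rho)\bigr)\subseteq B^{W}(\pi(z),Q\rho)$: making precise that projecting a small transversal ball along unstable leaves onto $W(x_0)$ inflates its radius by at most a bounded factor. This rests on the uniform lower bound on the angle between $W^{u}$ and $W$ together with the smallness of $\delta$, and is of exactly the same nature as the comparison already granted in the lemma immediately preceding the statement, so I would simply invoke that argument. The remaining bookkeeping — that the fiber is a single unstable ball rather than a union, and the recentering needed to apply Proposition \ref{measurable}(1) — is routine and contributes only harmless multiplicative constants.
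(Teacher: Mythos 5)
Your overall architecture is the same as the paper's: choose $l$ with $\tau^{l+1}\le r<\tau^{l}$, pass from $B(z,r)$ to the box $C_{l-l_0}(z)$ via the preceding lemma, disintegrate $\mu$ by Definition \ref{measure}, bound each nonempty fiber by a single unstable ball of radius $\tau^{l-l_0}$ and apply Proposition \ref{measurable}(1), and finally control the transversal measure of the set of admissible $x$. The gap is exactly at the step you yourself flag as delicate: you assert that the unstable holonomy $\pi|_{W(z)}\colon W(z)\to W(x_0)$ satisfies $\pi\bigl(B^{W}(z,\rho)\bigr)\subseteq B^{W}(\pi(z),Q\rho)$, i.e.\ that it is Lipschitz with a uniform constant. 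For a partially hyperbolic system the unstable foliation is in general only H\"older continuous transversally, and its holonomies between transversals are H\"older but \emph{not} Lipschitz; the paper makes this point explicitly at the start of Section 5 (``the unstable foliation is only absolutely continuous, but not Lipschitz continuous''), and it is precisely the reason Marstrand's product inequality cannot be invoked directly. The comparison in the lemma immediately preceding the statement does not supply what you need: there the unstable segment joining $w$ to $u=W(z)\cap W^{u}(w)$ has length comparable to $\tau^{l}$, so a pure angle/transversality bound suffices. Here, by contrast, $\pi$ slides each point of $B^{W}(z,\rho)$ a distance of order $\delta\gg\rho$ along its unstable leaf, and how far two leaves through points at distance $\rho$ have drifted apart after traveling a distance $\delta$ is governed by the (merely H\"older) transversal regularity of $W^{u}$, not by the angle bound. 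With only a H\"older estimate $\pi\bigl(B^{W}(z,\rho)\bigr)\subseteq B^{W}(\pi(z),Q\rho^{\theta})$, $\theta<1$, your transversal factor degrades to $(\rho^{\theta})^{n-u}$ and the final exponent becomes $u-\epsilon+\theta(n-u)<n-\epsilon$, which is not enough.

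The repair is the measure-theoretic substitute the paper uses: the unstable foliation is transversally absolutely continuous with bounded Jacobians, so $\nu\bigl(\pi(A)\bigr)\le C_2\,\nu(A)$ for Borel $A\subset W(z)$. One does not need the holonomy image of $B^{W}(z,\tau^{l-l_0})$ to be contained in a comparably small ball — only its $\nu$-measure matters for the outer integral — and the Jacobian bound converts it back to $\nu\bigl(B^{W}(z,\tau^{l-l_0})\bigr)\le C_3(\tau^{l-l_0})^{c+s}$ via the power law on the smooth transversal $W(z)$. With that single change the rest of your computation goes through and reproduces the paper's bound $\mu(B(z,r))\le Dr^{n-\epsilon}$.
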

\begin{proof}
There exists a $l>0$ such that $ \tau^{l+1} \leq r < \tau^l$. Let $h^u$ be the holonomy map along $W^u$ from $W(z)$ to $W(x_0)$. Since $W^u$ is transversally absolutely continuous, then $\exists C_2>1$, for any Borel $A \subset W(z)$, we have
\begin{equation}\label{e:absolute}
C_2^{-1}\nu(A) \leq \nu(h^u(A)) \leq C_2\nu(A).
\end{equation}
Then
\begin{equation*}
\begin{aligned}
\mu(B(z,r)) &< \mu(B(z, \tau^l)) \leq \mu (C_{l-l_0}(z))
\\& = \int_{h^u(B^W(z,  \tau^{l-l_0}))}\int_{B^u((h^u)^{-1}(x),  \tau^{l-l_0})} d\mu_x(w) d\nu(x)
\\& \leq C\int_{h^u(B^W(z,  \tau^{l-l_0}))}( \tau^{l-l_0})^{u-\epsilon} d\nu(x) \text{\ \ \ (by Proposition \ref{measurable}}(1))
\\& \leq CC_2 \nu(B^W(z,  \tau^{l-l_0}))( \tau^{l-l_0})^{u-\epsilon} \text{\ \ \ (by \eqref{e:absolute})}
\\& \leq CC_2C_3( \tau^{l-l_0})^{c+s}( \tau^{l-l_0})^{u-\epsilon}
\\& \leq CC_2C_3( \tau^{l-l_0})^{n-\epsilon}
\\& \leq CC_2C_3( \tau^{-l_0-1})^{n-\epsilon}r^{n-\epsilon} \text{\ \ \ (by\ \ } \tau^{l+1} \leq r)
\\&:=Dr^{n-\epsilon}.
\end{aligned}
\end{equation*}
\end{proof}

\begin{proof}[Proof of Theorem \ref{main2}]
For any nonempty open subset $V \subset M$, we can find some $x_0 \in V$, a local foliation $W$ transversal to $W^u$ near $x_0$, and $\delta>0$ small enough, such that $\Pi(x_0, W, \delta) \subset V$. Theorem \ref{main2} follows immediately from Lemma \ref{frostman} and Lemma \ref{frostman2} and letting $\epsilon \to 0$ in Proposition \ref{measurable}.
\end{proof}

\begin{proof}[Proof of Corollary \ref{countable}]
Let $Y=\{y_t\}_{t=1}^\infty$. Then $E_x(f, Y)=\cap_{t=1}^\infty E_x(f, y_t)$ is also a winning set of Schmidt games played on $W^u(x)$. Let's recall a proof of this fact due to \cite{S}. Let $\alpha$ be as in \eqref{e:winning}. We know that $E_x(f, y_t)$ is $\alpha$-winning for all $t \in \mathbb{N}$, and we want to show that $E_x(f, Y)$ is $(\alpha, \beta)$-winning for any $0<\beta<1$. Here is the strategy for Alice to win. At the first, third, fifth, \ldots turns, Alice uses a $(\alpha, \beta\alpha\beta;E_x(f, y_1))$-winning strategy which forces $\cap_{j=1}^\infty B(\omega_{1+2(j-1)}) \subset E_x(f, y_1)$. At the second, sixth, tenth, \ldots turns, Alice uses an $(\alpha, \beta(\alpha\beta)^3;E_x(f, y_2))$-winning strategy which forces $\cap_{j=1}^\infty B(\omega_{2+2^2(j-1)}) \subset E_x(f, y_2)$. In general, at $k$th turn with $k\equiv 2^{t-1} (\text{mod}2^t)$, Alice uses an $(\alpha, \beta(\alpha\beta)^{2^t-1};E_x(f, y_t))$-winning strategy which forces $\cap_{j=1}^\infty B(\omega_{2^{t-1}+2^t(j-1)}) \subset E_x(f, y_t)$. By this strategy, Alice can enforce that the unique point in the intersection of all balls is in $E_x(f, Y)$.

Now we construct the measure $\mu$ supported on $\Pi(x_0)\cap E(f,Y)$ as in Definition \ref{measure}. We can construct $\mu_x^{(l)}$ and $\mu_x$ as in Proposition \ref{familymeasure} supported on $B^u(x, \delta/2) \cap E_x(f, Y)$ since $E_x(f, Y)$ is a winning set of Schmidt games. So it is enough to specify a choice such that $x\mapsto \mu_x$ is measurable. The idea is same as in the proof of Proposition \ref{measurable1}. The difference is that at $l$th step of the induction(i.e. $(l+1)$th turn of the game) with $l+1\equiv 2^{t-1} (\text{mod}2^t)$, Alice need to avoid some $I_x^{(k)}(y_t)$, the preimages of $\Pi(y_t, c_t)$, where $\Pi(y_t, c_t)$ is the open rectangle neighborhood of $y_t$ as in the proof of Theorem \ref{conformal} for the $(\alpha, \beta(\alpha\beta)^{2^t-1};E_x(f, y_t))$ Schmidt games. If $l+1=2^{t-1}+2^t(j-1)$, then $k$ is bounded above by some number $K(j)$, and there are only finitely many such $k$'s. So the argument in Proposition \ref{measurable1} still works, i.e., at $(l+1)$th turn, there exists a finite partition $\mathcal{P}^{(l)}$, and Alice can choose balls such that $\bigcup_{x\in P_i^{(l)}} \mathbf{A}_x^{(l)}$ is a finite union of open and connected sets. Hence we have that $x\mapsto \mu_x$ is measurable as before.
\end{proof}
\ \
\\[-2mm]
\textbf{Acknowledgement.} The author would like to thank Federico Rodriguez Hertz and Anatole Katok for introduction to the topic and numerous discussions with them. The author would also like to thank Fran\c{c}ois Ledrappier for pointing out some results belonging to the history of the topic. The author also thanks Aaron Brown for helpful discussions.

\end{document}